\documentclass[12pt]{article}
\usepackage{geometry}
\usepackage{float}
\usepackage{latexsym,bm}
\usepackage{amsmath,amsfonts,amsthm,amssymb,mathrsfs}
\usepackage{graphicx}
\usepackage{hyperref}
\hypersetup{hidelinks}
\usepackage{xcolor}
\usepackage{mathtools}
\usepackage{enumitem}
\usepackage{comment}

\newtheorem{thm}{Theorem}[section]

\newtheorem{lemma}{Lemma}[section]
\newtheorem{cor}{Corollary}[section]
\newtheorem{pro}{Proposition}[section]

\newtheorem{definition}{Definition}[section]

\newcommand{\E}{\mathbb{E}}
\renewcommand{\Pr}{\mathbb{P}}

\title{The induced-$P_4$-free process}
\author{Hongyi Lou\thanks{Academy of Mathematics and Systems Science, Chinese Academy of Sciences, Beijing, China, and University of Chinese Academy of Sciences, Beijing, China.}
\and Xinzhe Song\footnotemark[1]
\and Guiying Yan\footnotemark[1]
}
\date{}

\begin{document}

\maketitle

\begin{abstract}

We study the random induced-\(P_4\)-free graph process. Let \(e_1,\ldots,e_N\), where \(N=\binom{n}{2}\), be a uniformly random
ordering of the edges of \(K_n\). Starting from the empty graph \(G_0\), we add \(e_{m+1}\) whenever \(G_m+e_{m+1}\) contains no induced \(P_4\),
and otherwise leave the graph unchanged. We show that the terminal graph is a trivially perfect graph and we describe the structure and distribution of the connected components of the terminal graph $G_N$. Consequently, we derive the limiting values of several natural graph parameters. In particular, the terminal graph $G_N$ has \(\Theta(n)\) edges.

\textbf{Keywords:} induced-\(P_4\)-free graph, random graph, random process, trivially perfect graph.

\textbf{2020 Mathematics Subject Classification:} Primary 05C80; Secondary 60C05, 05C75.
\end{abstract}

\section{Introduction}\label{sec:introduction}

The modern study of random graph processes originated with the seminal work of Erd\H{o}s and R\'enyi \cite{ErdosRenyi1959,ErdosRenyi1960}. Let \(e_1,\ldots,e_N\), where \(N=\binom{n}{2}\), be a uniformly random permutation of the edges of \(K_n\). They considered the properties of $G_{n,m}$ under a uniformly random ordering of the edges $\{e_1,e_2,\cdots, e_m\}$, which is known as the Erd\H{o}s-R\'enyi random graph.

A variant of the Erd\H{o}s-R\'enyi random graph is the random graph process, in which the edges of $K_n$ are offered in a uniformly random order and an offered edge is accepted if and only if the graph continues to satisfy a prescribed graph property, for example, triangle-free process, cycle-free process and  general $H$-free process \cite{ErdosSuenWinkler1995,Bohman2009,BohmanKeevash2010,Warnke2011,WarnkeK42014,WarnkeCl2014}.  These graph properties are  monotone decreasing and they are analyzed by the differential equation method of Wormald \cite{Wormald1995,Wormald1999}. These properties can improve lower bounds on Tur\'an numbers and on off-diagonal Ramsey numbers. Random graph processes governed by non-monotone graph properties are much less understood. The notable example is the K\H{o}nig process\cite{KamcevKMS2020}, which keeps the maximum matching number equal to the optimal vertex cover by offering available edges. Another natural source of non-monotone constraints arises from forbidding induced subgraphs.

In this paper, we consider the non-monotone graph property without induced \(P_4\), where \(P_4\) denotes the path on four vertices and three edges. A graph containing no induced \(P_4\) is also called a \emph{cograph} \cite{CorneilLerchsBurlingham1981}. Uniformly sampled random cographs have also been studied for the graph limits and degree distributions\cite{BassinoEtAl2020,Stufler2021}. Unlike the \(H\)-free graph properties we mentioned above, the induced-\(P_4\)-free property is non-monotone with respect to the edge set: deleting an edge may create an induced \(P_4\), while adding a further edge may also create an induced $P_4$. 

In this paper, we define the induced-\(P_4\)-free process as follows. Let \(G_0\) be the empty graph on a vertex set \(V\) with \(|V|=n\), and let \(N=\binom{n}{2}\). Let $\{e_1,e_2,\ldots,e_N\}$ be a uniformly random ordering of the edges of the complete graph \(K_n\) on \(V\). At each step \(m\geq 0\), the edge \(e_{m+1}\) is given to \(G_m\). We say $e_{m+1}$ is \emph{acceptable} for \(G_m\) if \(G_m+e_{m+1}\) contains no induced copy of \(P_4\). If \(e_{m+1}\) is acceptable, we set \(G_{m+1}=G_m+e_{m+1}\) and say that it is accepted; otherwise, we set \(G_{m+1}=G_m\) and say that it is rejected. We write \(G_N\) for the terminal graph. An important observation is that the process also preserves the induced-\(C_4\)-free property, so every \(G_m\) is an induced-$\{ P_4,C_4 \}$-free graph , which is also called a trivially perfect graph\cite{Golumbic1978}. 

The principal difficulty is that the set of acceptable edges is not monotone during the process, as discussed above, an edge that is unacceptable at one step may become acceptable later. Consequently, the standard differential equation approach used for \(H\)-free processes cannot be applied directly here. To avoid this difficulty, rather than considering the entire process, we focus on a monotone auxiliary graph process, called the I-skeleton process, which uses the same random edge ordering but ignores edges between two non-isolated vertices. Consequently, every non-trivial component of the auxiliary process is a star, and its relevant variables can be analyzed by the differential equation method. We track this process until only $o(n)$ isolated vertices remain. We then show that the original process is close to the auxiliary process. Our main goal is to describe the component structure of the terminal graph $G_N$ and, as consequences of this structural description, derive limiting results for several natural graph parameters.

Our main result is that we give an asymptotic description of the component decomposition of the terminal graph $G_N$. Since we can describe the components of the terminal graph $G_N$, we can get many parameters for the graph. For example, we obtain the number of different components that are isomorphic to a fixed connected graph, we can also count the number of cliques $K_t$,  we also know the degree distribution and so on. In particular, the terminal graph $G_N$ has \(\Theta(n)\) edges with high probability.

The paper is organized as follows. The main results are presented in Section~\ref{sec:main-results}. We introduce the tools we used in Section~\ref{sec:preliminaries}.  We analyze the auxiliary graph process by the differential equation method in Section~\ref{sec:iskeleton-process}. We then compare it to the original process and show that they are close in Section~\ref{sec:iskeleton-transfer}. Finally, we consider the graph process from the point when $o(n)$ isolated vertices remain to the final graph, and we prove all the theorems in Section~\ref{sec:terminal-phase}.

\section{Main results}\label{sec:main-results}
In this section, we state the main results of the article. 

Let \(\mathcal C(G)\) denote the set of connected components of \(G\). For \(C\in\mathcal C(G)\), we identify \(C\) with its vertex set whenever no confusion can arise, and write \(|C|:=|V(C)|\).

For graphs \(A\) and \(B\) on disjoint vertex sets, let \(A\vee B\) denote their join, obtained from their disjoint union by adding all edges between \(V(A)\) and \(V(B)\). For each \(\ell\ge1\), let \(\mathcal G_\ell\) denote the terminal random graph of the induced-\(P_4\)-free process on the labelled vertex set \([\ell]\), and define \(\mathcal H_\ell:=K_1\vee\mathcal G_\ell\). Thus, \(\mathcal H_\ell\) is the random graph obtained from a star with exactly \(\ell\) leaves by running an independent induced-\(P_4\)-free process on its leaf set. Define
\[
\rho_1=\frac{\sqrt2(2-\sqrt2)}{4}e^{-\sqrt2},
\qquad
\rho_\ell=e^{-\sqrt2}
\left(
\frac{(\sqrt2)^\ell}{\ell!}
-\frac{(\sqrt2)^{\ell+1}}{(\ell+1)!}
\right)
\quad(\ell\ge2).
\]

The following theorem gives an asymptotic decomposition of the terminal graph.

\begin{thm}
\label{thm:intro-terminal-decomposition}
For every fixed integer \(\ell\ge1\), the terminal graph \(G_N\) contains \( \rho_\ell n+o_{\mathbb P}(n) \) components obtained by starting from a star with $\ell $ leaves and subsequently running an induced-$P_4$-free process on its leaf set. Each such random component has distribution \(\mathcal H_\ell\).

Let \(\mathcal E_n\) denote the family of all remaining components of $G_N$. For every fixed \(q\ge1\),
\[
\sum_{C\in\mathcal E_n}|V(C)|^q=o_{\mathbb P}(n).
\]
In particular, the exceptional components contain \(o_{\mathbb P}(n)\) vertices in total.
\end{thm}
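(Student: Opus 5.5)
We will prove Theorem~\ref{thm:intro-terminal-decomposition} in three stages: analyse an auxiliary star process by the differential equation method, transfer the result to the true process, and then handle what happens after the isolated vertices are exhausted.

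\textbf{Key structural fact.} In a trivially perfect graph, every connected component has a universal vertex. So once two non-isolated vertices lie in different components, no edge between them is ever acceptable: adding it would create an induced $P_4$ or $C_4$ through their neighbours. Likewise, an edge between an isolated vertex $v$ and a vertex $u$ of a nontrivial component $C$ is acceptable only if $u$ is universal in $C$. We will verify this characterization first.

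\textbf{Stage 1: the I-skeleton process.} Fix the random ordering and accept an edge only if at least one endpoint is currently isolated and the edge keeps every component a star. If both endpoints are isolated, the edge forms a new $K_2$. If one endpoint is a star centre, or a leaf of a $K_2$ (which then becomes the centre), the edge adds a leaf. Track $X(t)$, the number of isolated vertices, and $S_\ell(t)$, the number of stars with $\ell$ leaves, at time $m=tn^{2}/2$ or a similar scaling. Wormald's theorem, with Lipschitz and boundedness conditions that hold since only $O(1)$ variables change per step, gives $X(m)=nx(t)+o(n)$ and $S_\ell(m)=n s_\ell(t)+o(n)$. Here $x'=-x^2-x\sum_\ell c_\ell s_\ell$, with $c_\ell$ the number of attachable vertices: $2$ for $\ell=1$ and $1$ for $\ell\ge2$. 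A change of variable to $x$ itself should linearize the system. We expect it to produce Poisson-type profiles with parameter $\sqrt2$ as $x\to0$, giving the limits $\rho_\ell$. The formula for $\rho_1$ reflects the special $K_2$ case, and $\rho_\ell$ for $\ell\ge2$ has the form $P(\mathrm{Po}=\ell)-P(\mathrm{Po}=\ell+1)$. The process is tracked until $x=\varepsilon$; then we let $\varepsilon\to0$.

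\textbf{Stage 2: comparison with the real process.} Both processes use the same ordering. Edges between two non-isolated vertices lying in different components are never accepted in either process. Edges inside a component are accepted in the real process according to an induced-$P_4$-free process on the leaf set, provided the centre stays universal. The main obstacle is that these extra internal edges could make a former leaf universal. This would change which isolated vertices may attach, and would break the star-centre bookkeeping.

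We plan to show this affects only $o(n)$ vertices. A component receives an internal edge only if some pair of its leaves is offered, and each offered pair has probability about $n^{-2}$ per step. A component of bounded size $L$ becomes nonstar-like during the window where $X\ge\varepsilon n$ with probability $O(L^2\cdot\text{window}/n^2)$. We must check that the window length is $O(n\log(1/\varepsilon))$ or so, giving probability $o(1)$ per component. Large stars are controlled by the exponential tails of $s_\ell$. A coupling then shows that the two processes differ on $o_{\mathbb P}(n)$ vertices.

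\textbf{Stage 3: terminal phase.} Once $X\le\varepsilon n$, the remaining isolated vertices attach to at most $\varepsilon n$ existing components or pair among themselves. This perturbs only $O(\varepsilon n)$ components, and with care the moments of their sizes are $O(\varepsilon n)$. Afterwards, component vertex sets are frozen except for internal edges. Conditional on its vertex set and centre, the edges within a component containing $\ell$ leaves are offered in uniformly random relative order. Acceptability of a leaf-leaf edge depends only on the leaf graph, since the centre is adjacent to all leaves, so $P_4$-freeness of $K_1\vee H$ is equivalent to that of $H$. Hence the final component is distributed as $\mathcal H_\ell$, independently across components.

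Finally, the moment bound $\sum_{C\in\mathcal E_n}|C|^q=o_{\mathbb P}(n)$ will follow from two ingredients. The first is the exponential decay of $s_\ell$, which bounds the contribution of large stars through $\sum_{\ell>L}\ell^q s_\ell\to0$. The second is the $O(\varepsilon n)$ bound on perturbed components, combined with a high-probability bound of $O(\log n)$ on the maximum component size. We then send $\varepsilon\to0$ and $L\to\infty$.
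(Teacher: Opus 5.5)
Your architecture is the same as the paper's: the I-skeleton analysed by the differential equation method, a coupling that confines discrepancies, a stopping time once few isolated vertices remain, and independent completion of stars. However, the two load-bearing steps are only asserted, and the first is not correct as stated.

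In Stage~3 you condition on a component's final vertex set and centre, and conclude that its leaf pairs arrive in uniform order, so that it is an independent copy of $\mathcal H_\ell$. There are two problems with this.
\begin{enumerate}
\item Conditioning on the final vertex set is conditioning on the component being untouched, and that event is correlated with its internal evolution. Leaf--leaf edges create new universal vertices (a leaf adjacent to all other leaves), which raise the chance of a later attachment. For example, a $4$-leaf star can end with leaf graph $2K_2$ (only the centre universal) or $K_4$ (every vertex universal).
\item A leaf pair can be revealed before your stopping time without the component ever leaving the star class: an isolated $x$ is offered to a leaf $y$ of a star, rejected, and $x$ later attaches to that star's centre. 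Your bound $O(L^2\cdot\text{window}/n^2)$ treats the leaf set as fixed, whereas it is determined by the same ordering. This case needs its own argument; the paper uses a race, in which $xu(S)$ must come first among the centre pairs $xu(S')$, $S'\in\mathcal A_r$, an event of probability $\le 1/|\mathcal A_r|$.
\end{enumerate}
The paper avoids both problems in four steps. It conditions only on $\mathcal F_{\tau_1}$. It restricts to $\tau_1$-clean stars (unmarked, at most $b_n$ vertices, no leaf pair yet offered). It defines the completion $\widetilde D_S$ for \emph{every} clean star from its own leaf-pair order; these orders are independent and uniform by suffix exchangeability. Only then does it observe that $\widetilde D_S$ is the actual terminal component when $S$ is untouched, with touched stars going into $\mathcal E_n$.

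Second, your phrase ``with care the moments of their sizes are $O(\varepsilon n)$'' covers the heart of the terminal phase. Late attachments are size-biased and have no clean target law: a component $D$ may have up to $|D|$ universal vertices, and pairs from an isolated vertex to vertices that became universal late may already have been offered and rejected. The paper's device is a reservoir of at least $\rho_1 n/3$ live $K_2$'s whose chosen endpoint has been universal since birth, so its pairs with current isolated vertices are certainly unoffered and acceptable. Racing against them gives, at each accepted I--I or I--N step, probability $\le 4|D|/(\rho_1 n)$ of attaching to $D$ and $O(\varepsilon)$ of an I--I birth; Gronwall then applies over at most $\varepsilon n$ steps.

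Even so, the first-touch term is of order $\varepsilon\sum_S|S|^{q+1}$, so you need tail bounds on the \emph{random} skeleton sizes that are uniform in $n$. Decay of the limiting $s_\ell$ does not give this: Wormald's theorem is for each fixed $\ell$, and a single component of size $\sqrt n$ is invisible to it yet contributes $n^{q/2}\gg n$ for $q>2$. An $O(\log n)$ maximum only yields $\varepsilon n(\log n)^q$. The paper supplies three ingredients: a Poisson domination of the attachments to one component (total intensity $\le 10$), the susceptibility bound $S(\widehat G_r)\le 50n$ via Freedman, and $\varepsilon_n\to0$ faster than any power of $b_n$ to absorb the factors $b_n^{q-1}$. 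Once such uniform bounds and the race lemma are in place, your fixed-$\varepsilon$ scheme, with a diagonal choice of $\varepsilon\to0$, would be a legitimate alternative to the paper's growing horizon.

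The susceptibility bound is also what controls the spreading of discrepancies in your Stage~2, which you do not address. A vertex that joins a newly universal leaf in $G$ stays isolated in $\widehat G$ and can seed discrepancies in other components. Handling this is the role of the marking rules (B1)--(B2) and the Gronwall recursion for $Z_r$.

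Two minor slips: the time scale is $m=tn/2$, not $tn^2/2$, and the system linearises under $u=\int_0^s i$, not under $x$.
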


The constants \(\rho_\ell\) are determined by the evolution of the process. Let \(\widehat I_r\), \(\widehat C_r\), and \(\widehat Y_{1,r}\) be the numbers of isolated vertices, non-trivial components and \(K_2\)-components after offering $r$ edges, respectively. 
We rescale the discrete offer time \(r\) by setting \(s_r:=\frac{2r}{n}\).
Equivalently, for \(s\ge0\), let \( r(s):=\left\lfloor sn/2\right\rfloor
\)
and define
\[
\widehat i(s)=\frac{\widehat I_{r(s)}}{n},
\qquad
\widehat c(s)=\frac{\widehat C_{r(s)}}{n},
\qquad
\widehat y_1(s)=\frac{\widehat Y_{1,r(s)}}{n}.
\]

\begin{thm}
\label{thm:intro-iskeleton}
For every fixed finite time horizon \(S<\infty\),
\[
\sup_{0\le s\le S}
\left\|
(\widehat i(s),\widehat c(s),\widehat y_1(s))-(i(s),c(s),y_1(s))
\right\|_\infty
\xrightarrow{\mathbb P}0,
\]
where $(i,c,y_1)$ is the solution to the following system of differential equations:
\[
        \dot i=-i^2-i(c+y_1),\qquad
        \dot c=\frac12 i^2,\qquad
        \dot y_1=\frac12 i^2-2iy_1,
\]
with initial conditions \((i(0),c(0),y_1(0))=(1,0,0)\). 

Moreover, for every fixed \(\ell\ge1\), if \(\widehat Y_{\ell,N}\) denotes the terminal number of stars with exactly \(\ell\) leaves, then we have
\[
        \frac{\widehat Y_{\ell,N}}{n}
        \xrightarrow{\mathbb P}\rho_\ell
\]

\end{thm}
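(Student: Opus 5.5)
The plan is to analyse the I-skeleton process directly with Wormald's differential equation method (in the form recalled in Section~\ref{sec:preliminaries}), and then obtain the terminal star counts by passing $s\to\infty$. First I make the one-step dynamics explicit. In the I-skeleton every non-trivial component is a star, and I track how an offered pair changes the configuration.
\begin{itemize}
\item A pair of two isolated vertices is always accepted and creates a $K_2$.
\item A pair joining an isolated vertex $u$ to a vertex $v$ of a $K_2$ is accepted and produces a star with two leaves centred at $v$.
\item A pair joining $u$ to the centre of a star with $\ell\ge2$ leaves is accepted and produces a star with $\ell+1$ leaves.
\item A pair joining $u$ to a leaf of a star with $\ge2$ leaves is rejected, because it would create an induced $P_4$ (new vertex, leaf, centre, another leaf).
\item Pairs between two non-isolated vertices are ignored by the definition of the I-skeleton.
\end{itemize}
Writing $I,C,Y_\ell$ for the current counts, the number of pairs of each type is $\binom I2$, $2IY_1$ and $I(C-Y_1)$ respectively. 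The offered pair at step $r+1$ is uniform over the $N-r=(1-o(1))N$ unoffered pairs as long as $r=O(n)$. Hence
\[
\E[\Delta I]=-\frac{I^2+I(C+Y_1)}{N}+O(1/n),\qquad
\E[\Delta C]=\frac{I^2}{2N}+O(1/n),
\]
\[
\E[\Delta Y_1]=\frac{I^2/2-2IY_1}{N}+O(1/n),
\]
and for $\ell\ge2$,
\[
\E[\Delta Y_\ell]=\frac{I\,(Y_{\ell-1}\mathbf 1_{\ell\ge3}+2Y_1\mathbf 1_{\ell=2}-Y_\ell)}{N}+O(1/n).
\]
Under the scaling $s=2r/n$, with $n/2$ steps per unit of $s$ and $N\sim n^2/2$, these give exactly the stated system. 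They also give $\dot y_2=2iy_1-iy_2$ and $\dot y_\ell=i(y_{\ell-1}-y_\ell)$ for $\ell\ge3$.

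All one-step changes are bounded by $2$. The right-hand sides are polynomial, hence Lipschitz on the bounded domain $\{0\le i,c,y_\ell\le 1\}$. Wormald's theorem therefore gives, for any fixed $S$ and any fixed finite collection $y_1,\dots,y_L$, uniform convergence in probability on $[0,S]$. This proves the first assertion, and it also gives $\widehat Y_{\ell,r(S)}/n\to y_\ell(S)$.

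For the terminal statement I pass from time $S$ to the end of the process. Two observations make this easy.
\begin{itemize}
\item Since $\dot i\le -i^2$, we have $i(s)\le 1/(1+s)$.
\item After offer time $r(S)$, each of the remaining $\widehat I_{r(S)}$ isolated vertices can change $\widehat Y_\ell$ by at most $2$ over the rest of the process, because a vertex stops being isolated once and each such event moves at most one star between two size classes.
\end{itemize}
Thus $|\widehat Y_{\ell,N}-\widehat Y_{\ell,r(S)}|\le 2\widehat I_{r(S)}\le 2(i(S)+o_{\Pr}(1))n$. Letting $n\to\infty$ and then $S\to\infty$ yields $\widehat Y_{\ell,N}/n\to y_\ell(\infty):=\lim_{s\to\infty}y_\ell(s)$, which exists by monotone bounded arguments, since $\sum_\ell \ell\, y_\ell\le 1$.

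It remains to identify $y_\ell(\infty)=\rho_\ell$, which I expect to be the main computational step. I would change time to $\tau(s)=\int_0^s i$. In this time the equations become
\[
\frac{di}{d\tau}=-i-(c+y_1),\qquad \frac{dc}{d\tau}=\frac i2,\qquad \frac{dy_1}{d\tau}=\frac i2-2y_1,\qquad \frac{dy_\ell}{d\tau}=y_{\ell-1}-y_\ell\ (\ell\ge3).
\]
The first block is a linear constant-coefficient system, so it is explicitly solvable. The terminal time $\tau_\infty=\int_0^\infty i$ is the first zero of $i(\tau)$, and I expect $\tau_\infty=\sqrt2$ from the factor $e^{-\sqrt2}$ in $\rho_\ell$. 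The equations for $y_\ell$, $\ell\ge2$, are a Poisson-type cascade driven by $2y_1$. Solving them by variation of constants and evaluating at $\tau_\infty$ should produce the stated formulas. The check of the $\ell=1$ and $\ell=2$ boundary terms is where errors are most likely, so I would verify $\sum_\ell(\ell+1)\rho_\ell=1$ as a consistency test, since every vertex eventually ends up in a star.
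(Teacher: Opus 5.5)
Your proposal is correct and follows essentially the same route as the paper. The paper likewise applies Wormald's method to the closed system $(i,c,y_1,\ldots,y_L)$. It controls the post-$S$ change of $\widehat Y_\ell$ by the $\widehat I_{r(S)}$ isolated vertices remaining at time $S$. It then solves via the time change $\tau=\int_0^s i$, evaluating the cascade solution $y_\ell=e^{-\tau}\bigl(\tfrac{\tau^\ell}{\ell!}-\tfrac{\tau^{\ell+1}}{(\ell+1)!}\bigr)$ at $\tau_\infty=\sqrt2$. Your bound $i(s)\le 1/(1+s)$ is exactly what forces $\tau_\infty$ to be the first zero of $i(\tau)=\tfrac{2-\tau^2}{2}e^{-\tau}$.

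Three small points to tidy up:
\begin{enumerate}
\item State explicitly that every acceptable pair you count is still unoffered. An I--I pair, or a pair from an isolated vertex to a universal vertex, that had been offered earlier would have been accepted then, so these counts are exact.
\item Replace the ``monotone bounded'' justification for the existence of $\lim_s y_\ell(s)$. It fails because $y_1$ is not monotone; use instead continuity of the explicit $\tau$-solution along $\tau(s)\uparrow\sqrt2$.
\item Use an open domain slightly larger than $[0,1]^{L+2}$, so that the initial point $(1,0,\ldots,0)$ is interior, as Wormald's theorem requires.
\end{enumerate}
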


For a finite connected graph \(F\), let \(C_F(G)\) be the number of
components of \(G\) isomorphic to \(F\). We obtain the following corollary:

\begin{cor}
\label{cor:fixed-component-limits}
For every fixed finite connected graph \(F\) with \(|V(F)|\ge 2\),
\[
\frac{C_F(G_N)}{n}
\xrightarrow{\mathbb P}
\rho_{|V(F)|-1}
\mathbb P\!\left(
\mathcal H_{|V(F)|-1}\cong F
\right).
\]

\end{cor}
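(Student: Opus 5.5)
The plan is to derive the corollary from Theorem~\ref{thm:intro-terminal-decomposition} by a law of large numbers over the $\mathcal H_\ell$-type components, plus a crude bound on the exceptional ones. Fix $F$ connected with $|V(F)|=\ell+1\ge2$, and split
\[
C_F(G_N)=\sum_{k\ge1} A_k + B,
\]
where $A_k$ counts components of type $\mathcal H_k$ that are isomorphic to $F$, and $B$ counts components in $\mathcal E_n$ isomorphic to $F$. A component of type $\mathcal H_k$ has exactly $k+1$ vertices, so only $k=\ell$ contributes and $C_F(G_N)=A_\ell+B$. For the exceptional part, $B\le \sum_{C\in\mathcal E_n}|V(C)|$, which is $o_{\mathbb P}(n)$ by the theorem with $q=1$. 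It therefore suffices to show $A_\ell/n\to \rho_\ell\,\mathbb P(\mathcal H_\ell\cong F)$ in probability.

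Let $M_\ell$ be the number of $\mathcal H_\ell$-type components, so $M_\ell=\rho_\ell n+o_{\mathbb P}(n)$ by Theorem~\ref{thm:intro-terminal-decomposition}. The key structural input is conditional independence. Condition on the time at which the star skeleton of a component is completed, i.e.\ its centre and leaf set are fixed, and after which no vertex can join. The internal evolution is then driven only by the offers of leaf--leaf pairs inside it. These occur in a uniformly random relative order, independent across disjoint components and independent of everything else. Moreover, an edge inside one component is acceptable iff it creates no induced $P_4$ within that component together with its centre. The centre is adjacent to all leaves, so an induced $P_4$ through the centre is impossible. Hence acceptability depends only on the induced-$P_4$-freeness of the leaf graph, and the leaf graph runs exactly the induced-$P_4$-free process on $\ell$ labelled vertices. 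This is presumably how the "each such random component has distribution $\mathcal H_\ell$" clause is meant.

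Given $M_\ell$, the indicators $\mathbf 1[C\cong F]$ over these components are i.i.d.\ Bernoulli$(p)$ with $p=\mathbb P(\mathcal H_\ell\cong F)$. Chebyshev's inequality then gives
\[
\mathbb P\bigl(|A_\ell-pM_\ell|>\varepsilon n \mid M_\ell\bigr)\le \frac{M_\ell}{4\varepsilon^2n^2}\le\frac1{4\varepsilon^2 n}.
\]
Combining this with $M_\ell/n\to\rho_\ell$ yields $A_\ell/n\to p\rho_\ell$, which proves the corollary.

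The main obstacle is justifying independence rigorously. Which components are of type $\mathcal H_\ell$, and when they freeze, is random and correlated with the edge order. If the paper's theorem only asserts the marginal distribution, I would instead prove concentration by a second-moment computation. For two distinct skeleton stars, I would show the covariance of their indicators vanishes. This holds because the internal leaf--leaf offers of disjoint vertex sets are disjoint sets of edge positions, and, conditioned on the skeleton history, their relative orders are independent uniform permutations. I would then conclude $\mathrm{Var}(A_\ell)=O(n)+o(n^2)$.
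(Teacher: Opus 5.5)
Your reduction $C_F(G_N)=A_\ell+B$, with $B\le\sum_{C\in\mathcal E_n}|V(C)|=o_{\mathbb P}(n)$, is fine. The concentration step, however, has a genuine gap. You claim that, given $M_\ell$, the indicators $\mathbf 1[C\cong F]$ over the $\mathcal H_\ell$-type components are i.i.d.\ Bernoulli$(p)$. This is not justified, and it is not exactly true.

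The $\mathcal H_\ell$-type components are the clean stars that receive no accepted I--N edge after $\tau_1$. Which stars these are is decided by the future of the edge order, and that selection is correlated with the internal leaf evolution. A leaf that becomes adjacent to all other leaves is universal, so by Proposition~\ref{pro:structural-invariants}(iii) it is a new acceptable target for isolated vertices. Stars whose leaf graphs develop such vertices early are therefore more likely to be touched. Consequently:
\begin{itemize}
\item a per-component ``freeze time'' is not a stopping time, so Lemma~\ref{lem:suffix-exchangeability} cannot be invoked there;
\item the leaf-pair order is not ``independent of everything else'';
\item conditioning on being untouched biases the law away from $\mathcal H_\ell$.
\end{itemize}
Your fallback has the same defect. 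The actual terminal indicators depend on touching, not only on the leaf-pair orders, so their covariances do not simply vanish.

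The missing idea is to decouple the selection from the distribution. Take the $\mathcal F_{\tau_1}$-measurable family $\mathcal S_n$ of $\tau_1$-clean stars. For every $S\in\mathcal S_n$, touched or not, define the artificial completion $\widetilde D_S=K_1\vee\Gamma_S$, where $\Gamma_S$ is the induced-$P_4$-free process on $L(S)$ driven solely by the relative order $\pi_S$ of the unrevealed leaf pairs. By Lemma~\ref{lem:clean-star-completion}, these completions are conditionally independent given $\mathcal F_{\tau_1}$ with laws $\mathcal H_{\ell(S)}$. So conditional Chebyshev applies to $\sum_{S\in\mathcal S_n,\ \ell(S)=\ell}\mathbf 1[\widetilde D_S\cong F]$, and $Y_\ell^{\mathrm{cl}}/n\to\rho_\ell$ by Corollary~\ref{cor:clean-star-counts}. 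Whenever $S$ is untouched, $\widetilde D_S$ is the actual terminal component. Hence this sum differs from $C_F(G_N)$ by at most $|\mathcal S_n\setminus\mathcal S_n^{\mathrm{unt}}|+|\mathcal E_n|\le 2\sum_{D\in\mathcal E_n}|D|=o_{\mathbb P}(n)$. The selection bias is absorbed into the negligible set of touched stars rather than computed, which is exactly how the paper proves the corollary.
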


Since the terminal graph can be viewed as the union of many components described in Theorem~\ref{thm:intro-terminal-decomposition},  we can now consider some graph parameters. Let \(\phi(D)\) be a real-valued graph parameter on finite graphs satisfying \(|\phi(D)|\le q_1|V(D)|^{q_2}\) for fixed \(q_1,q_2<\infty\), for example, the number of unlabelled copies of a clique, the distribution of vertex degree and so on. Then we have the following theorem and corollary:

\begin{thm}
\label{thm:additive-statistics}
For every graph parameter $\phi$ satisfying the above condition
\[
        \frac1n\sum_{D\in\mathcal C(G_N)}\phi(D)
        \xrightarrow{\mathbb P}
        \sum_{\ell\ge1}\rho_\ell\,\mathbb E\phi(\mathcal H_\ell),
\]
and the series on the right is absolutely convergent.
\end{thm}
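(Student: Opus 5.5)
We would derive the statement from Theorem~\ref{thm:intro-terminal-decomposition} by a truncation argument. Fix \(\varepsilon>0\) and a cutoff \(L\). Split the sum over \(\mathcal C(G_N)\) into three parts: components of the \(\mathcal H_\ell\)-type with \(\ell\le L\), those with \(\ell>L\), and the exceptional family \(\mathcal E_n\). Isolated vertices also have to be placed somewhere. The constants \(\rho_\ell\) for \(\ell\ge1\) sum to \(e^{-\sqrt2}(\sqrt2-1)+\rho_1+\dots\). We would check whether the vertex count \(\sum_\ell(\ell+1)\rho_\ell\) equals \(1\). If it does not, isolated vertices of \(G_N\) would be counted in \(\mathcal E_n\). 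That is consistent with the theorem only if they number \(o_{\mathbb P}(n)\), which is what Theorem~\ref{thm:intro-iskeleton} gives, since \(i(s)\to0\). Either way, the exceptional part is handled uniformly. By \(|\phi(D)|\le q_1|V(D)|^{q_2}\) and Theorem~\ref{thm:intro-terminal-decomposition} with \(q=\lceil q_2\rceil\) (using \(|V|\ge1\)), we get \(\frac1n\sum_{C\in\mathcal E_n}|\phi(C)|=o_{\mathbb P}(1)\).

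For the main part with \(\ell\le L\), we need a law of large numbers. Theorem~\ref{thm:intro-terminal-decomposition} gives \(\rho_\ell n+o_{\mathbb P}(n)\) components, each distributed as \(\mathcal H_\ell\). We would interpret, or extract from the proof, that conditionally on the skeleton stage these components evolve as independent induced-\(P_4\)-free processes on their leaf sets. The relative order of edges inside disjoint leaf sets is uniform and independent, and no edges between components are ever accepted once the skeleton is fixed. Then \(\sum\phi(D)\) over the \(\ell\)-type components is a sum of \(M_\ell\) i.i.d. bounded variables, with \(|\phi|\le q_1(\ell+1)^{q_2}\). Chebyshev's inequality gives \(\frac1n\sum=\rho_\ell\mathbb E\phi(\mathcal H_\ell)+o_{\mathbb P}(1)\). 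Summing over finitely many \(\ell\le L\) preserves convergence in probability.

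The tail \(\ell>L\) is the main obstacle, because Theorem~\ref{thm:intro-terminal-decomposition} controls each \(\ell\) separately but not uniformly. We would bound it by \(\frac{q_1}{n}\sum_{\ell>L}(\ell+1)^{q_2}M_\ell\), where \(M_\ell\) is the number of such components. First, the limiting series converges absolutely: \(|\rho_\ell|\le e^{-\sqrt2}(\sqrt2)^\ell/\ell!\), so \(\sum_\ell\rho_\ell(\ell+1)^{q_2}<\infty\). This proves the absolute convergence claim, and its tail beyond \(L\) is below \(\varepsilon\) for \(L\) large. For the random tail, we would use a moment identity. Let \(T_n:=\frac1n\sum_{C}|C|^{q}\) over all components, with \(q=\lceil q_2\rceil\). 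If \(T_n\) converges in probability to \(\sum_\ell\rho_\ell(\ell+1)^q\), then subtracting the convergent finite part \(\ell\le L\) and the \(o_{\mathbb P}(1)\) exceptional part gives that the tail is at most \(\varepsilon+o_{\mathbb P}(1)\). Controlling \(T_n\) itself requires information beyond the per-\(\ell\) statement. The cleanest route is to note that the exceptional family \(\mathcal E_n\) in Theorem~\ref{thm:intro-terminal-decomposition} can be taken to contain all components of \(\mathcal H_\ell\)-type with \(\ell>L_n\) for some slowly growing \(L_n\) (a diagonal argument over \(\ell\)). With that choice, the moment bound \(\sum_{\mathcal E_n}|C|^q=o_{\mathbb P}(n)\) controls the tail, and the Poisson-type decay of the star counts from Theorem~\ref{thm:intro-iskeleton} handles \(L<\ell\le L_n\). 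If that reading is unavailable, we would revisit the skeleton stage and bound star sizes directly. Leaves attach to a star at rate proportional to \(i\), and \(\int_0^\infty i\,ds\) is finite, which gives Poisson-like tails for star sizes uniformly in \(n\).

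Finally, let \(n\to\infty\) and then \(L\to\infty\), \(\varepsilon\to0\).
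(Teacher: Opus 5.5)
The overall architecture matches the paper's. You bound the exceptional family by its moment estimate, run a conditional law of large numbers for stars with at most $L$ leaves, and control $\ell>L$ by a tail bound uniform in $n$. Your fallback for that tail is precisely the paper's argument.

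\textbf{The tail step.} The route you call cleanest does not work. A diagonal argument over the fixed-$\ell$ limits in Theorems~\ref{thm:intro-terminal-decomposition} and~\ref{thm:intro-iskeleton} gives uniform control only for $\ell\le L_n$. It says nothing about the infinite range $\ell>L_n$. The bound $\sum_{C\in\mathcal E_n}|C|^q=o_{\mathbb P}(n)$ is proved for a specific family: descendants of marked, size-exceptional or pre-exposed stars, post-$\tau_1$-touched or post-$\tau_1$-born components, and isolated vertices. None of those estimates says anything about untouched clean stars with many leaves. Enlarging $\mathcal E_n$ by all $\mathcal H_\ell$-type components with $\ell>L_n$ while keeping the bound amounts to showing $\frac1n\sum_{\ell>L_n}(\ell+1)^qY^{\mathrm{unt}}_\ell\xrightarrow{\mathbb P}0$. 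That is the very tail estimate you need, so the argument is circular.

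For $q=1$ you could close the gap by mass conservation, since $\sum_{\ell\ge1}(\ell+1)\rho_\ell=1$ and the vertex total is exactly $n$. For $q\ge2$ this fails, because a few stars with very many leaves carry $q$-th moment without carrying vertex mass.

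So the fallback is not optional; you must go back to the skeleton, as the paper does. The stopped Poisson domination in Lemma~\ref{lem:auxiliary-component-bounds} bounds the attachment intensity to a fixed component by $6\widehat I_r/n^2$ per step, summing to at most $3\widehat\Lambda+o(1)\le 10$. This gives $\lim_{L_0\to\infty}\limsup_n\mathbb P\bigl(\frac1n\sum_{|C|>L_0}|C|^q>\eta\bigr)=0$ at time $r_n^{\max}$. Corollary~\ref{cor:clean-star-counts} transfers this to the $\tau_1$-clean stars, each of which lies in a distinct skeleton component at $r_n^{\max}$. Untouched clean stars keep their vertex set to the end, so this controls the tail of the actual terminal components.

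\textbf{The law-of-large-numbers step.} This also needs a repair. The $\mathcal H_\ell$-type terminal components are the untouched clean stars. Being untouched after $\tau_1$ is neither $\mathcal F_{\tau_1}$-measurable nor independent of the leaf completion. A leaf that becomes adjacent to all other leaves becomes universal, and so becomes a new target for I--N attachments. Hence these components are not i.i.d.\ copies of $\mathcal H_\ell$ given the skeleton, and Chebyshev cannot be applied to them directly.

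The paper sidesteps this by applying Chebyshev to the completions $\widetilde D_S$ of all $\tau_1$-clean stars. That index set is $\mathcal F_{\tau_1}$-measurable and the completions are conditionally independent (Lemma~\ref{lem:clean-star-completion}). The touched stars are then absorbed into the error $C\sum_{S\in\mathcal S_n\setminus\mathcal S_n^{\mathrm{unt}}}|S|^q\le C\sum_{D\in\mathcal E_n}|D|^q=o_{\mathbb P}(n)$.
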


For \(\ell\ge1\), write
\(
        a_\ell:=\mathbb E\,e(\mathcal G_\ell)
\) and denote the number of unlabelled copies of fixed graph $F$ in graph $H$ by $N(H,F)$.

\begin{cor}
\label{cor:intro-clique-profile}
For every fixed \(j\ge2\),
\[
        \frac{N(G_N,K_j)}n
        \xrightarrow{\mathbb P}
        \kappa_j:=
        \sum_{\ell\ge1}\rho_\ell\,
        \mathbb E N(\mathcal H_\ell,K_j).
\]
In particular, if $j=2$, then
\[
        \frac{e(G_N)}{n}\xrightarrow{\mathbb P}\kappa_2 =1-\frac{1+\sqrt2}{2}e^{-\sqrt2}
        +\sum_{\ell\ge1}\rho_\ell a_\ell>0.
\]
As $\infty>\kappa_2$ is a constant, \(G_N\) has \(\Theta(n)\) edges w.h.p.
\end{cor}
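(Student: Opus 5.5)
We apply Theorem~\ref{thm:additive-statistics} with \(\phi(D):=N(D,K_j)\). Since every copy of \(K_j\) lies inside a single component, \(N(G_N,K_j)=\sum_{D\in\mathcal C(G_N)}N(D,K_j)\). The growth condition holds because \(N(D,K_j)\le\binom{|V(D)|}{j}\le|V(D)|^j\), so we may take \(q_1=1\) and \(q_2=j\). The theorem then gives convergence in probability to \(\kappa_j=\sum_{\ell\ge1}\rho_\ell\,\mathbb E N(\mathcal H_\ell,K_j)\), and also the absolute convergence of this series. This proves the first display. The only point to check is that \(\rho_\ell\) decays factorially, so \(\sum_\ell\rho_\ell(\ell+1)^j<\infty\). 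That decay is already built into the proof of Theorem~\ref{thm:additive-statistics}.

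For \(j=2\), \(N(\mathcal H_\ell,K_2)=e(\mathcal H_\ell)=\ell+e(\mathcal G_\ell)\), because the apex of \(\mathcal H_\ell=K_1\vee\mathcal G_\ell\) contributes \(\ell\) edges. Hence
\[
\kappa_2=\sum_{\ell\ge1}\ell\rho_\ell+\sum_{\ell\ge1}\rho_\ell a_\ell ,
\]
and both series converge since \(a_\ell\le\binom{\ell}{2}\). It remains to compute \(\sum_\ell \ell\rho_\ell\). Put \(x=\sqrt2\). For \(\ell\ge2\),
\[
\ell\rho_\ell=e^{-x}\left(\frac{x^\ell}{(\ell-1)!}-\frac{\ell\,x^{\ell+1}}{(\ell+1)!}\right).
\]
Summing over \(\ell\ge2\), the first part gives \(e^{-x}x(e^x-1)\). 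For the second part write \(\ell=(\ell+1)-1\); this gives \(e^{-x}\bigl[x(e^x-1-x)-(e^x-1-x-x^2/2)\bigr]\). The \(\ell=1\) term is \(\rho_1=\tfrac{x(2-x)}{4}e^{-x}\). Collecting everything,
\[
\sum_{\ell\ge1}\ell\rho_\ell=1-e^{-x}\Bigl(1+\tfrac{x}{2}\Bigr),
\]
since the polynomial coefficients of \(e^{-x}\) reduce, using \(x^2=2\), to
\[
-1-x+\tfrac{x}{2}-\tfrac{x^2}{4}+x^2-\tfrac{x^3}{2}+\tfrac{x^2}{2}=-1-\tfrac{x}{2}.
\]
Thus the constant equals \(1-\frac{1+\sqrt2}{2}e^{-\sqrt2}\)? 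Here \(1+x/2=1+\tfrac{\sqrt2}{2}=\tfrac{2+\sqrt2}{2}\), which does not obviously match \(\tfrac{1+\sqrt2}{2}\). This bookkeeping is the main obstacle. I will recheck it carefully, in particular the \(\ell=1\) term and the telescoping, against the stated formula. As an independent cross-check, \(\sum_\ell\ell\rho_\ell\) should equal the limiting density of star edges, namely \(1-i(\infty)-c(\infty)\) from Theorem~\ref{thm:intro-iskeleton} (vertices minus components), which must match the paper's constant.

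Positivity follows because every term is nonnegative and \(\rho_1>0\) with \(a_1=0\), so \(\kappa_2\ge\rho_1>0\). Finiteness follows from the absolute convergence above. Therefore \(e(G_N)/n\to\kappa_2\in(0,\infty)\) in probability. Consequently, for any \(\varepsilon<\kappa_2\), w.h.p. \((\kappa_2-\varepsilon)n\le e(G_N)\le(\kappa_2+\varepsilon)n\), so \(e(G_N)=\Theta(n)\) w.h.p.
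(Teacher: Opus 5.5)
Your structural argument is the paper's: apply Theorem~\ref{thm:additive-statistics} to $\phi(D)=N(D,K_j)$, where the bound $|\phi(D)|\le|V(D)|^j$ suffices, and use $e(\mathcal H_\ell)=\ell+e(\mathcal G_\ell)$. The paper stops there and simply asserts that this gives the stated $\kappa_2$. The gap is in the step you added, the evaluation of $\sum_{\ell\ge1}\ell\rho_\ell$. Your value $1-\bigl(1+\tfrac{\sqrt2}{2}\bigr)e^{-\sqrt2}$ is wrong, and you leave the mismatch unresolved. So, as written, the proposal does not establish the closed form for $\kappa_2$, which is part of the statement.

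The slip is purely arithmetic. Your two series are correct, and with $x=\sqrt2$ they give
\[
\sum_{\ell\ge2}\ell\rho_\ell=1+e^{-x}\Bigl(\frac{x^2}{2}-1-x\Bigr)=1-\sqrt2\,e^{-\sqrt2},
\qquad
\rho_1=e^{-x}\Bigl(\frac{x}{2}-\frac{x^2}{4}\Bigr)=\frac{\sqrt2-1}{2}e^{-\sqrt2}.
\]
The coefficient of $e^{-x}$ is therefore $-1-x+\tfrac{x^2}{2}+\tfrac{x}{2}-\tfrac{x^2}{4}$, which equals $-\tfrac{1+\sqrt2}{2}$ when $x^2=2$. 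Your displayed collection carries the spurious terms $x^2-\tfrac{x^3}{2}$, and its stated value is also mis-evaluated. Hence $\sum_{\ell\ge1}\ell\rho_\ell=1-\tfrac{1+\sqrt2}{2}e^{-\sqrt2}$, in agreement with the paper.

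Your proposed cross-check is the cleanest way to see this, and it is what the paper implicitly relies on.
\begin{itemize}
\item Telescoping gives $\sum_{\ell\ge1}\rho_\ell=\rho_1+e^{-x}\tfrac{x^2}{2}=\tfrac{1+\sqrt2}{2}e^{-\sqrt2}=c(\sqrt2)=\beta$, the density of skeleton components.
\item $\sum_{\ell\ge1}(\ell+1)\rho_\ell=1$, either by direct summation or by Theorem~\ref{thm:additive-statistics} with $\phi(D)=|V(D)|$, as in the proof of Corollary~\ref{cor:intro-degree-distribution}.
\end{itemize}
Subtracting yields $\sum_\ell\ell\rho_\ell=1-\beta$: star edges equal vertices minus components, consistent with Proposition~\ref{pro:iskeleton-edge-counts}. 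With this correction the rest of your argument goes through: absolute convergence, positivity via $\kappa_2\ge\rho_1>0$, and the $\Theta(n)$ conclusion.
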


We can also determine the distribution of vertex degree. For a finite graph \(H\), let \(N_k^{\deg}(H)=|\{v\in V(H):d_H(v)=k\}|\).

\begin{cor}
\label{cor:intro-degree-distribution}
For every fixed \(k\ge0\),
\[
        \frac{N_k^{\deg}(G_N)}{n}
        \xrightarrow{\mathbb P}p_k,
        \qquad
        p_k=\sum_{\ell\ge1}\rho_\ell\,
        \mathbb E N_k^{\deg}(\mathcal H_\ell).
\]
Moreover, \(p_0=0\) and \(\sum_{k\ge0}p_k=1\).
\end{cor}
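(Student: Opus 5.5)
The plan is to apply Theorem~\ref{thm:additive-statistics} with the parameter \(\phi(D)=N_k^{\deg}(D)\). This parameter is additive over components, because \(N_k^{\deg}(G_N)=\sum_{D\in\mathcal C(G_N)}N_k^{\deg}(D)\). It also satisfies the growth condition with \(q_1=q_2=1\), since \(0\le N_k^{\deg}(D)\le |V(D)|\). Theorem~\ref{thm:additive-statistics} then gives convergence in probability to \(p_k=\sum_{\ell\ge1}\rho_\ell\,\mathbb E N_k^{\deg}(\mathcal H_\ell)\), and the series converges absolutely.

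For \(p_0=0\), note that \(\mathcal H_\ell=K_1\vee\mathcal G_\ell\) with \(\ell\ge1\) is connected on \(\ell+1\ge2\) vertices, so it has no isolated vertex. Hence \(N_0^{\deg}(\mathcal H_\ell)=0\) deterministically, and every term in the series for \(p_0\) vanishes.

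For \(\sum_k p_k=1\), all terms are nonnegative, so Tonelli lets us swap the sums:
\[
\sum_{k\ge0}p_k=\sum_{\ell\ge1}\rho_\ell\,\mathbb E\sum_{k}N_k^{\deg}(\mathcal H_\ell)=\sum_{\ell\ge1}(\ell+1)\rho_\ell .
\]
There are two ways to show this equals \(1\).

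\emph{Probabilistic route.} Apply Theorem~\ref{thm:additive-statistics} with \(\phi(D)=|V(D)|\). The left side is \(n/n=1\), so \(\sum_\ell(\ell+1)\rho_\ell=1\).

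\emph{Direct check.} Write \(x=\sqrt2\) and \(a_\ell=x^\ell/\ell!\). For \(\ell\ge2\) we have \(\rho_\ell=e^{-x}(a_\ell-a_{\ell+1})\). Abel summation gives
\[
\sum_{\ell\ge2}(\ell+1)(a_\ell-a_{\ell+1})=3a_2+\sum_{\ell\ge3}a_\ell=e^{x}-1-x+\tfrac{x^2}{2}\cdot 2=e^x-1-x+x^2 .
\]
With \(x^2=2\) this equals \(e^x+1-x\), so \(\sum_{\ell\ge2}(\ell+1)\rho_\ell=1-(\sqrt2-1)e^{-\sqrt2}\). For the \(\ell=1\) term,
\[
2\rho_1=\tfrac{\sqrt2(2-\sqrt2)}{2}e^{-\sqrt2}=(\sqrt2-1)e^{-\sqrt2},
\]
which cancels the remainder exactly, so the total is \(1\).

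The only delicate point is justifying the interchange of sums over \(k\) and \(\ell\). This is immediate by Tonelli since everything is nonnegative, and finiteness follows from the absolute convergence in Theorem~\ref{thm:additive-statistics} applied to \(\phi=|V|\).
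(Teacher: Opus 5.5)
Your proposal is correct and follows the paper's proof. Like the paper, you apply Theorem~\ref{thm:additive-statistics} with $\phi=N_k^{\deg}$, observe that $N_0^{\deg}(\mathcal H_\ell)=0$, and combine Tonelli with $\phi(D)=|V(D)|$ to get $\sum_{\ell\ge1}(\ell+1)\rho_\ell=1$. Your additional Abel-summation verification of $\sum_{\ell\ge1}(\ell+1)\rho_\ell=1$ from the explicit formulas for $\rho_\ell$ is also correct; the paper does not include it, and it gives a useful independent consistency check on the closed-form constants.
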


The proof proceeds in three steps.  We first analyze the auxiliary I-skeleton process by the differential equation method in Section~\ref{sec:iskeleton-process}. We then couple it to the original process and show that the exceptional components can be controlled in Section~\ref{sec:iskeleton-transfer}.  Finally, the clean stars are completed independently inside their leaf sets, which gives Theorem~\ref{thm:intro-terminal-decomposition} and the stated consequences in Section~\ref{sec:terminal-phase}.

\section{Preliminaries}\label{sec:preliminaries}

Let $\mathcal F_0\subseteq \mathcal F_1\subseteq \cdots\subseteq \mathcal F_N$ 
be the natural filtration generated by the random edge ordering.
Equivalently, \(\mathcal F_r\) records the history of the process up to the $r$-th
step. All stopping times considered below are with respect to this
filtration or to the corresponding stopped filtrations.

We introduce the following diagonal deterministic-envelope lemma.

\begin{lemma}\label{lem:deterministic-envelope}
Let \(b_n>0\) satisfy \(b_n\to\infty\), and \(W_n\ge0\). If for every fixed integer $m\ge1$, $b_n^mW_n\xrightarrow{\mathbb P}0$, then there is a sequence $\varepsilon_n>0$ with $\varepsilon_n\to0$ such that $W_n/\varepsilon_n\xrightarrow{\mathbb P}0$ and for every fixed integer $m\ge1$, $\varepsilon_nb_n^m\to0$.
\end{lemma}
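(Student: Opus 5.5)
The plan is a standard diagonal argument. For each fixed $m\ge1$ the hypothesis $b_n^{m}W_n\xrightarrow{\mathbb P}0$ will supply a threshold index $n_m$. Between consecutive thresholds we choose $\varepsilon_n$ so that it beats $b_n^{m}$ and is beaten by $b_n^{m+1}$; this interpolation is the whole idea. Only the two facts $b_n\to\infty$ and $b_n>0$ are needed.

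\textbf{Choice of thresholds and of $\varepsilon_n$.} Since $b_n\to\infty$, we may assume $b_n\ge1$ for all large $n$; for the finitely many small $n$ we set $\varepsilon_n=1$, which affects no limit. For each $m\ge1$, the hypothesis applied with exponent $2m+1$ gives $b_n^{2m+1}W_n\xrightarrow{\mathbb P}0$. We therefore choose a strictly increasing sequence $n_1<n_2<\cdots$ such that, for all $n\ge n_m$,
\[
\mathbb P\bigl(b_n^{2m+1}W_n>1/m\bigr)\le 1/m
\qquad\text{and}\qquad
b_n\ge m.
\]
For $n_m\le n<n_{m+1}$ we set $\varepsilon_n:=b_n^{-m}$ when $n\ge n_1$. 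Write $m(n)$ for the index with $n_{m(n)}\le n<n_{m(n)+1}$. Then $m(n)\to\infty$, and $\varepsilon_n=b_n^{-m(n)}\le m(n)^{-m(n)}\to0$.

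\textbf{Verification.} First take a fixed $k$. Once $m(n)\ge k+1$ we have
\[
\varepsilon_n b_n^{k}=b_n^{k-m(n)}\le b_n^{-1}\to0 .
\]
Second, for $W_n/\varepsilon_n$, fix $\delta>0$. For $n$ with $1/m(n)<\delta$,
\[
\frac{W_n}{\varepsilon_n}=b_n^{m(n)}W_n\le b_n^{2m(n)+1}W_n ,
\]
using $b_n\ge1$. So the event $\{W_n/\varepsilon_n>\delta\}$ lies inside $\{b_n^{2m(n)+1}W_n>1/m(n)\}$. By the choice of thresholds, this event has probability at most $1/m(n)\to0$. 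Hence $W_n/\varepsilon_n\xrightarrow{\mathbb P}0$.

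\textbf{Main obstacle.} The only delicate point is bookkeeping. The thresholds must be chosen uniformly in $n$ beyond $n_m$, which is exactly what convergence in probability provides. The exponent used in the probability bound must dominate the exponent in $\varepsilon_n^{-1}$, so that the tail event is controlled; choosing $2m+1$ versus $m$ leaves room to spare. Monotonicity of $n_m$ ensures that $m(n)$ is well defined and tends to infinity.
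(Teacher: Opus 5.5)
Your proposal is correct and is essentially the paper's own diagonal argument. Both extract thresholds from convergence in probability at a boosted exponent and set $\varepsilon_n$ to an inverse power of $b_n$ whose exponent $j(n)\to\infty$ slowly; the paper takes $\varepsilon_n=1/(j(n)b_n^{j(n)})$ with exponent $2j$, while you take $\varepsilon_n=b_n^{-m(n)}$ and build $b_n\ge m$ into the thresholds, which makes $\varepsilon_n\to0$ immediate. (The boost to $2m+1$ is not actually needed, since $\mathbb P(b_n^{m}W_n>1/m)\le 1/m$ together with $\varepsilon_n=b_n^{-m(n)}$ already suffices.)
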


\begin{proof}
We choose a slowly increasing sequence $(j(n))_{n\ge 1} \subseteq \mathbb Z_+$. Choose integers $1=n_1^\star<n_2^\star<\cdots$ so that, for every $i\ge1$ and all
$n\ge n_i^\star$,
$
        \mathbb P\!\left(b_n^{2i}W_n>i^{-2}\right)\le i^{-1}.
$
Let $j(n)=i$ for $n_i^\star\le n<n_{i+1}^\star$, and set
$\varepsilon_n=1/{j(n)b_n^{j(n)}}$.

On the event $b_n^{2j}W_n\le j^{-2}$, with $j=j(n)$,
$
        W_n/\varepsilon_n
        =jb_n^jW_n
        \le j^{-1}b_n^{-j}
        \le j^{-1}.
$
The complementary event has probability at most $j^{-1}$, and hence
$W_n/\varepsilon_n\to0$ in probability. Since \(b_n\to\infty\), we may ignore finitely many \(n\) and assume \(b_n\ge1\). For a fixed $m$, we also may ignore finitely many \(n\) and assume \(j(n)>m\) and then
$
        \varepsilon_nb_n^m
        =j(n)^{-1}b_n^{m-j(n)}
        \le j(n)^{-1}\to0.
$
\end{proof}

We shall use the following form of Freedman's inequality
\cite{Freedman1975}; see also \cite{ChungLu2006}.  It packages the localization
step used throughout the proof.

\begin{lemma}\label{lem:freedman}(Freedman's inequality)
Let $(\mathcal F_t)_{t\ge0}$ be a filtration and $\tau$ be a stopping time, and let $(M_t)_{t\ge0}$ be an $(\mathcal F_t)$-adapted real-valued process
with $M_0=0$. Here \(t\wedge\tau:=\min\{t,\tau\}\). Suppose that the stopped process $(M_{t\wedge\tau})_{t\ge0}$ is a supermartingale and for some $C>0$,
$
        M_{t\wedge\tau}-M_{(t-1)\wedge\tau}\le C
$
almost surely for every $t\ge1$.  Define
$
V_t=
\sum_{s=1}^t
\mathbb E\!\left[
\bigl(M_{s\wedge\tau}-M_{(s-1)\wedge\tau}\bigr)^2
\middle|\mathcal F_{s-1}
\right].
$
Then for all $a,v>0$,
$$
\mathbb P\!\left(
\exists t\ge0:
M_{t\wedge\tau}\ge a
\ \text{and}\
V_t\le v
\right)
\le
\exp\!\left(
-\frac{a^2}{2(v+Ca/3)}
\right).
$$
In particular, if the stopped process is a martingale and $\left|M_{t\wedge\tau}-M_{(t-1)\wedge\tau}\right|\le C$ almost surely for every $t\ge1$, then
$$
\mathbb P\!\left(
\exists t\ge0:
|M_{t\wedge\tau}|\ge a
\ \text{and}\
V_t\le v
\right)
\le
2\exp\!\left(
-\frac{a^2}{2(v+Ca/3)}
\right).
$$
\end{lemma}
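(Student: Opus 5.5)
The plan is the standard exponential supermartingale argument, in Freedman's original form, applied to the stopped increments. Write $X_t:=M_{t\wedge\tau}-M_{(t-1)\wedge\tau}$. The stopped process is an $(\mathcal F_t)$-supermartingale with $\mathbb E[X_t\mid\mathcal F_{t-1}]\le0$ and $X_t\le C$. The variance process $V_t$ is predictable: each summand is $\mathcal F_{s-1}$-measurable. Fix $\lambda>0$ and set $g(\lambda):=(e^{\lambda C}-1-\lambda C)/C^2$.

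The key one-step estimate uses that $x\mapsto(e^{\lambda x}-1-\lambda x)/x^2$ is nondecreasing in $x$. Hence for $x\le C$,
\[
e^{\lambda x}\le1+\lambda x+g(\lambda)x^2 .
\]
Taking conditional expectations and using $\mathbb E[X_t\mid\mathcal F_{t-1}]\le0$ together with $1+u\le e^u$ gives
\[
\mathbb E[e^{\lambda X_t}\mid\mathcal F_{t-1}]\le\exp\bigl(g(\lambda)\,\mathbb E[X_t^2\mid\mathcal F_{t-1}]\bigr).
\]
It follows that
\[
Z_t:=\exp\bigl(\lambda M_{t\wedge\tau}-g(\lambda)V_t\bigr)
\]
is a nonnegative supermartingale with $Z_0=1$. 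Note that after $\tau$ the increments vanish, so $V_t$ stops growing and nothing breaks there.

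Next define the stopping time $\sigma:=\inf\{t:M_{t\wedge\tau}\ge a,\ V_t\le v\}$. This is a stopping time because both conditions are $\mathcal F_t$-measurable. On $\{\sigma<\infty\}$ we have $Z_\sigma\ge\exp(\lambda a-g(\lambda)v)$. Optional stopping at $\sigma\wedge T$ gives $\mathbb E Z_{\sigma\wedge T}\le1$, and letting $T\to\infty$ (Fatou, or simply the finite horizon $N$ in our application) yields
\[
\mathbb P(\sigma<\infty)\le\exp\bigl(-\lambda a+g(\lambda)v\bigr).
\]

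The remaining step, which is the only computational one, is optimizing over $\lambda$. Bound $g(\lambda)\le \frac{\lambda^2}{2(1-\lambda C/3)}$ for $\lambda C<3$, using $k!\ge 2\cdot3^{k-2}$ in the series. Then choose $\lambda=a/(v+Ca/3)$. This gives exactly $\lambda a-g(\lambda)v\ge a^2/(2(v+Ca/3))$. I would check this algebra carefully: with this $\lambda$ one has $1-\lambda C/3=v/(v+Ca/3)$, so $g(\lambda)v\le \lambda^2(v+Ca/3)/2=\lambda a/2$, and the exponent becomes $\lambda a/2=a^2/(2(v+Ca/3))$. That settles the one-sided statement.

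For the martingale case, apply the one-sided bound to both $M$ and $-M$. Both are supermartingales after stopping, with increments bounded above by $C$ and the same $V_t$. A union bound then gives the factor $2$.

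The main obstacle is purely bookkeeping about the stopping. One must verify that the supermartingale property is used only for the stopped increments, and that the event is taken jointly with $V_t\le v$ at the same $t$; the stopping time $\sigma$ handles the latter.
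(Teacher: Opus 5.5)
Your argument is correct. The paper does not prove this lemma; it imports it from Freedman (1975) and Chung--Lu (2006). What you have written is the standard proof behind those citations:
\begin{itemize}
\item the one-step bound $e^{\lambda x}\le 1+\lambda x+g(\lambda)x^2$ for $x\le C$, from monotonicity of $(e^{\lambda x}-1-\lambda x)/x^2$;
\item the nonnegative supermartingale $\exp(\lambda M_{t\wedge\tau}-g(\lambda)V_t)$;
\item optional stopping at the first time the joint event $\{M_{t\wedge\tau}\ge a,\ V_t\le v\}$ occurs;
\item the Bernstein-type choice $\lambda=a/(v+Ca/3)$.
\end{itemize}

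The only difference from Freedman's own route is in the last step. He optimizes $\lambda$ exactly to get a Bennett-type bound and then relaxes it to the Bernstein form. You instead bound $g(\lambda)$ by a geometric series via $k!\ge 2\cdot 3^{k-2}$ and substitute a specific $\lambda$. Both give the stated exponent.

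Your write-up also directly covers the two ways the paper's formulation departs from the textbook statement. First, the increments are supermartingale rather than martingale differences. Second, $V_t$ uses conditional second moments rather than conditional variances. Your argument handles both because it only uses $\mathbb E[X_t\mid\mathcal F_{t-1}]\le 0$ together with $\lambda>0$, and the predictability of $V_t$.
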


We also introduce the following inequality for subsequent use.

\begin{lemma}\label{lem:discrete-gronwall}
Let $(u_i)_{i\ge0}$, $(a_i)_{i\ge0}$, and $(b_i)_{i\ge0}$ be
non-negative real sequences.  Suppose that for every $0\le i<m$,
$        u_{i+1}\le (1+a_i)u_i+b_i. $
Then
$$
        u_m\le u_0\prod_{i=0}^{m-1}(1+a_i)
        +\sum_{j=0}^{m-1} b_j\prod_{i=j+1}^{m-1}(1+a_i).
$$
In particular,
$$
        u_m\le \exp\!\left(\sum_{i=0}^{m-1}a_i\right)
        \left(u_0+
        \sum_{j=0}^{m-1} b_j\right).
$$

\end{lemma}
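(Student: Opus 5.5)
We argue by induction on $m$, following the first displayed bound. For $m=0$ the claim reads $u_0\le u_0$, since the empty product equals $1$ and the empty sum equals $0$. Suppose the bound holds for $m$. Because $a_m\ge0$, the factor $1+a_m$ is nonnegative, so multiplying the inductive bound by it preserves the inequality. Combining this with the hypothesis $u_{m+1}\le(1+a_m)u_m+b_m$ gives
\[
u_{m+1}\le u_0\prod_{i=0}^{m}(1+a_i)+\sum_{j=0}^{m-1}b_j\prod_{i=j+1}^{m}(1+a_i)+b_m .
\]
The last term $b_m$ is exactly the $j=m$ summand, whose product $\prod_{i=m+1}^{m}(1+a_i)$ is empty and equals $1$. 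This completes the induction and proves the first bound.

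For the second bound we use $1+x\le e^x$ for $x\ge0$. Since each $a_i\ge0$, every partial product satisfies
\[
\prod_{i=j+1}^{m-1}(1+a_i)\le \exp\Bigl(\sum_{i=j+1}^{m-1}a_i\Bigr)\le \exp\Bigl(\sum_{i=0}^{m-1}a_i\Bigr),
\]
where the second step holds because we only add nonnegative terms. The same bound applies to the full product multiplying $u_0$. Since $u_0\ge0$ and $b_j\ge0$, we may replace every product by $\exp\bigl(\sum_{i=0}^{m-1}a_i\bigr)$ without reversing the inequality, and then factor this exponential out.

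There is no real obstacle here. The only point needing care is the use of nonnegativity: $a_i\ge0$ makes multiplication by $1+a_i$ order-preserving in the induction, and $u_0,b_j\ge0$ make the termwise upper bounds on the products legitimate.
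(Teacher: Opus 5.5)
Your proposal is correct and follows the same route as the paper: you iterate the recursion (made explicit as an induction on $m$) and then bound each product by $1+x\le e^x$. Your explicit use of $a_i\ge0$ for order preservation and of $u_0,b_j\ge0$ for the termwise exponential bound fills in exactly the details the paper leaves implicit.
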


\begin{proof}
Iterating the recursion gives
$$
        u_m\le u_0\prod_{i=0}^{m-1}(1+a_i)
        +\sum_{j=0}^{m-1} b_j\prod_{i=j+1}^{m-1}(1+a_i).
$$
The exponential bound follows from $1+x\le e^x$ for $x\ge0$.
\end{proof}

We shall use the following lemma for a uniformly random ordering $\pi$.

\begin{lemma}\label{lem:suffix-exchangeability}
After revealing the ordering up to any stopping time \(T\), let \(E_T\) be the set of unrevealed elements. For $A \subseteq E_T$, denote the first position at which an element of $A$ appears by $\min_\pi A$.  Conditional on \(\mathcal F_T\), the unrevealed suffix is still a uniform random ordering of \(E_T\).  In particular, for disjoint unrevealed nonempty sets \(A,B\), 
\[
        \mathbb P(\min_\pi A<\min_\pi B\mid\mathcal F_T)
        =
        \frac{|A|}{|A|+|B|} \le \frac{|A|}{|B|},
\]
Moreover, if \(A_1,\ldots,A_m\subseteq E_T\) are pairwise disjoint nonempty
\(\mathcal F_T\)-measurable sets, then the relative orders induced by the suffix
on \(A_1,\ldots,A_m\) are mutually independent uniform orders.
\end{lemma}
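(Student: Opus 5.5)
The plan is to prove Lemma~\ref{lem:suffix-exchangeability} in two stages. First I establish the conditional uniformity of the suffix. Then I read off the two consequences by elementary counting on uniform orderings.

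\textbf{Uniformity of the suffix.} Write \(\pi\) as a sequence \((\pi_1,\ldots,\pi_N)\). Since \(T\) is a stopping time with respect to the filtration generated by the revealed prefix, the event \(\{T=t\}\) depends only on \((\pi_1,\ldots,\pi_t)\). Fix \(t\) and a prefix \(p=(p_1,\ldots,p_t)\) with \(\{T=t\}\cap\{(\pi_1,\ldots,\pi_t)=p\}\ne\emptyset\). On such an atom, membership in \(\{T=t\}\) is decided by \(p\) alone, so the atom equals \(\{(\pi_1,\ldots,\pi_t)=p\}\). The orderings in this atom are exactly the concatenations of \(p\) with an arbitrary ordering \(\sigma\) of \(E_T=E\setminus\{p_1,\ldots,p_t\}\). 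Each such ordering has probability \(1/N!\), so conditionally on the atom, \(\sigma\) is uniform over all \((N-t)!\) orderings of \(E_T\). The atoms partition the sample space and generate \(\mathcal F_T\). Hence, conditionally on \(\mathcal F_T\), the suffix is a uniform ordering of \(E_T\). If \(T\) can exceed the horizon, one takes \(T\le N\) or handles \(T=N\) trivially with an empty suffix.

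\textbf{The two consequences.} All sets below are \(\mathcal F_T\)-measurable, so I work inside a fixed atom and treat \(E_T\), \(A\), \(B\), \(A_i\) as fixed sets, with the suffix a uniform ordering of \(E_T\).

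For the first consequence, restrict the uniform ordering to \(A\cup B\); the induced order on this set is itself uniform. The first element of \(A\cup B\) is therefore uniformly distributed over \(A\cup B\). The event \(\min_\pi A<\min_\pi B\) occurs exactly when this first element lies in \(A\), which has probability \(|A|/(|A|+|B|)\). The inequality \(|A|/(|A|+|B|)\le |A|/|B|\) is immediate since \(|A|\ge0\).

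For the second consequence, let \(U=A_1\cup\cdots\cup A_m\). The induced order on \(U\) is uniform over the \(|U|!\) orderings of \(U\). Consider the map sending an ordering of \(U\) to the tuple of its induced orders on \(A_1,\ldots,A_m\). Every tuple of orderings \((\sigma_1,\ldots,\sigma_m)\) has the same number of preimages, namely the number of interleavings, the multinomial coefficient \(|U|!/\prod_i|A_i|!\). Hence the tuple is uniform on the product set. This means the induced orders are independent, and each is uniform.

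I expect no real obstacle here. The only point needing care is the measurability step: one must check that \(\{T=t\}\) restricted to a fixed prefix places no constraint on the suffix. That is exactly the defining property of a stopping time for the prefix filtration.
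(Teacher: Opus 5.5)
Your proof is correct and follows the same route as the paper, which simply asserts that conditioning on the revealed prefix leaves the unrevealed suffix exchangeable and that the race identity and the independence statement follow by symmetry. Your decomposition of $\mathcal F_T$ into prefix atoms, the restriction to $A\cup B$, and the equal-fibre interleaving count are just explicit versions of those two steps.
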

\begin{proof}
This follows immediately from the exchangeability of the unrevealed elements
of a uniform random ordering after conditioning on the revealed prefix.  The
race identity and the independence statement follow by symmetry.
\end{proof}

\section{The I-skeleton decomposition and evolution}\label{sec:iskeleton-process}
This section has two purposes. First, we prove that the graph during the evolution must be trivially perfect. Second, we define and analyze the auxiliary process using differential equations. 

\subsection{Structure during the evolution}

For a connected graph \(C\), let
\[
        U(C)=\{x\in V(C): xy\in E(C)\text{ for every }y\in V(C)\setminus\{x\}\}
\]
be the set of \emph{universal vertices} of \(C\). A graph is
\emph{trivially perfect} if every non-empty connected induced subgraph has a
universal vertex\cite{Wolk1965,Golumbic1978}.

We give some facts during the whole evolution as follows:

\begin{pro}\label{pro:structural-invariants}
The following facts hold throughout the process.
\begin{enumerate}[label=(\roman*)]
\item No accepted edge joins two distinct non-trivial components.
\item Every graph appearing in the process is trivially perfect.
\item Let $C$ be a non-trivial component, $v\notin V(C)$ be an isolated vertex, and  $w\in V(C)$.  Then $vw$ is acceptable if and only if
$w\in U(C)$.
\end{enumerate}
\end{pro}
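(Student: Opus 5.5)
We argue by induction on the number of accepted edges, proving (ii) first and deriving (i) and (iii) from it. The induction hypothesis is that the current graph \(G_m\) is induced-\(P_4\)-free, which holds by definition of acceptance. The extra content is that it is also induced-\(C_4\)-free.

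For (ii) we use the characterization of trivially perfect graphs as the \(\{P_4,C_4\}\)-free graphs \cite{Golumbic1978}, so it suffices to show that no accepted edge creates an induced \(C_4\). Suppose \(G_m\) is \(\{P_4,C_4\}\)-free and \(e=xy\) is accepted, while \(G_m+e\) contains an induced \(C_4\). That cycle must use \(e\), say it is \(x\,y\,z\,w\,x\). Deleting \(e\) from it leaves the induced path \(y\,z\,w\,x\) in \(G_m\), because the non-edges \(yw\) and \(xz\) are unaffected by adding \(e\). This contradicts the induced-\(P_4\)-freeness of \(G_m\). Hence every \(G_m\) is \(\{P_4,C_4\}\)-free, and since the class is hereditary, every connected induced subgraph has a universal vertex, which is (ii).

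For (i), let \(x\in C_1\) and \(y\in C_2\) lie in distinct non-trivial components of \(G_m\). Pick a neighbour \(x'\) of \(x\) in \(C_1\) and a neighbour \(y'\) of \(y\) in \(C_2\). After adding \(xy\), the path \(x'\,x\,y\,y'\) is induced, because there are no edges between \(C_1\) and \(C_2\) other than \(xy\). So \(xy\) is rejected.

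For (iii), take an isolated vertex \(v\) and \(w\in V(C)\).
\begin{itemize}
\item If \(w\notin U(C)\), some \(u\in C\) is not adjacent to \(w\). Because \(C\) is connected, a shortest \(w\)–\(u\) path in \(C\) has length at least 2. Taking its first two edges \(w\,a\,b\) gives an induced path with \(wb\notin E\). Then \(v\,w\,a\,b\) is an induced \(P_4\) in \(G_m+vw\), since \(v\) has no other neighbours. So \(vw\) is rejected.
\item Conversely, suppose \(w\in U(C)\) and \(G_m+vw\) contains an induced \(P_4\), which must contain the edge \(vw\). Since \(v\) has degree 1, \(v\) is an endpoint, so the path is \(v\,w\,a\,b\) with \(a,b\in C\) and \(wb\notin E\). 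This contradicts \(w\) being universal in \(C\).
\end{itemize}

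The whole argument is elementary, and no probabilistic input or earlier lemma is needed. The one point requiring care is in (ii): one must check that deleting \(e\) from an induced \(C_4\) really yields an induced \(P_4\), which is exactly the observation that the two chords \(yw\) and \(xz\) are absent in both graphs.
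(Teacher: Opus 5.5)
Your proof is correct and matches the paper's argument for (i) and (ii) essentially verbatim. The one difference is in the forward direction of (iii): you build the induced \(P_4\) as \(v\,w\,a\,b\) from a shortest \(w\)--\(u\) path, whereas the paper uses \(v\,w\,c\,u\) with a universal vertex \(c\in U(C)\) supplied by (ii); your version needs only connectivity of \(C\), so, contrary to your opening framing, (iii) does not actually depend on (ii).
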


\begin{proof}
For (i), let $A$ and $B$ be distinct non-trivial components, and suppose that
$u\in A$ and $v\in B$.  Choose neighbors $u'\in A$ of $u$ and $v'\in B$ of
$v$.  After adding $uv$, the vertices $u',u,v,v'$ induce the path
$u'-u-v-v'$, so the edge is rejected.

For (ii), every graph produced by the process is induced-$P_4$-free by definition.
We show inductively that it is also induced-$C_4$-free.  If the addition of an accepted
edge $e$ created an induced $C_4$, then that cycle would contain $e$, since the
preceding graph was induced-$C_4$-free.  Deleting $e$ from the cycle would leave an
induced-$P_4$ in the preceding graph, which is a contradiction. Thus every graph
appearing in the process is induced-$\{P_4,C_4\}$-free, and hence trivially perfect.

For (iii), suppose first that $w\notin U(C)$.  Choose $u\in V(C)$ non-adjacent to $w$, and choose a universal vertex $c\in U(C)$, which exists by (ii).  After adding $vw$, the vertices $v,w,c,u$ induce the path $v-w-c-u$, so the edge is rejected.  Conversely, suppose that $w\in U(C)$.  Any induced $P_4$ created by adding $vw$ would have to contain the new edge and, since $v$ has no other neighbor, would have the form $v$-$w$-$x$-$y$.  But $wy$ is then a chord because $w$ is universal in $C$. Hence no induced $P_4$ is created, and $vw$ is acceptable.
\end{proof}

The proposition describes the global evolution of components in two ways: creating a new non-trivial component from two isolated vertices, and attaching an isolated vertex to a currently universal vertex.

\subsection{Auxiliary I-skeleton process}
\label{subsec:coupled-skeleton}

The auxiliary I-skeleton process is coupled to the original induced-$P_4$-free
process by using the same uniformly random ordering
$(e_1,e_2,\ldots,e_N)$ of $E(K_n)$.  Let $\widehat G_0$ be the empty
graph on $V$.  Suppose that $\widehat G_r$ has been constructed and that
the next offered edge is $e_{r+1}=xy$.

The auxiliary process is defined recursively as follows.
Set
\[
        \widehat G_{r+1}=
        \begin{cases}
        \widehat G_r+xy, & \widehat G_r+xy \text{ contains no induced copy of $P_4$ and } \{x,y\}\cap I(\widehat G_r)\neq \emptyset,\\
        \widehat G_r, & \text{otherwise}.
        \end{cases}
\]

By induction, every non-trivial component of $\widehat G_r$ is a star.
For a copy of $K_2$, both vertices belong to $U(C)$, so either vertex
may serve as its center.

Relative to a graph $H$, we call $xy$ an \emph{I--I edge} if both
endpoints are isolated in $H$, an \emph{I--N edge} if exactly one
endpoint is isolated in $H$, and an \emph{N--N edge} if neither endpoint
is isolated in $H$. 

For \(ab\in\{\mathrm{II},\mathrm{IN}\}\), let \(\widehat X_{ab}(r)\)
denote the number of edges of type \(ab\) that have been added to
\(\widehat G\) among \(e_1,\ldots,e_r\).  Here \(\mathrm{II}\) and
\(\mathrm{IN}\) are the subscript forms of the I--I and I--N edge types,
respectively.

For $r\ge0$, recall that \(\widehat I_r\) and \(\widehat C_r\) denote
the numbers of isolated vertices and non-trivial components after
offering $r$ edges, respectively.  For $\ell\ge1$, let
\(\widehat Y_{\ell,r}\) denote the number of stars with exactly $\ell$
leaves after offering $r$ edges.  Their initial values are
\[
        \widehat I_0=n,\qquad
        \widehat C_0=0,\qquad
        \widehat Y_{\ell,0}=0.
\]

A copy of $K_2$ is counted as a one-leaf star. We use the normalized variables
$$
        \widehat i_r=\frac{\widehat I_r}{n},\qquad
        \widehat c_r=\frac{\widehat C_r}{n},\qquad
        \widehat y_{\ell,r}=\frac{\widehat Y_{\ell,r}}{n}\quad(\ell\ge1).
$$
For fixed $L$, we write $\widehat{\mathbf x}_r=(\widehat i_r,\widehat c_r,\widehat y_{1,r}),
        \
        \widehat{\mathbf x}_r^{(L)}
        =(\widehat i_r,\widehat c_r,\widehat y_{1,r},\ldots,\widehat y_{L,r}).$

 We use the continuous time $s_r:=2r/n$,
so one step corresponds to the increment $\Delta s=2/n$.  The number of
unoffered pairs at time $r$ is $M_r=\binom n2-r$.
Fix $S<\infty$ and restrict $0\le r\le Sn/2$. Then $M_r=\frac{n^2}{2}\bigl(1+O(n^{-1})\bigr)$.

\begin{pro}\label{pro:closed-iskeleton-evolution}

For fixed $L$, let $\mathbf x^{(L)}(s)=(i(s),c(s),y_1(s),\ldots,y_L(s))$ be the solution of
$$
\dot i=-i^2-i(c+y_1),\qquad
\dot c=\frac12 i^2,\qquad
\dot y_1=\frac12 i^2-2iy_1,
$$
$$
\dot y_2=2iy_1-iy_2,
\qquad
\dot y_\ell=i(y_{\ell-1}-y_\ell),\quad 3\le\ell\le L,
$$
with initial condition $\mathbf x^{(L)}(0)=(1,0,0,0,\ldots,0)$.  Then, for every fixed
$S,L<\infty$,
$$
\sup_{0\le r\le Sn/2}
\left\|\widehat{\mathbf x}_r^{(L)}-\mathbf x^{(L)}(s_r)\right\|
\xrightarrow{\mathbb P}0.
$$

\end{pro}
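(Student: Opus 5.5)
We prove this with Wormald's differential equation method: one-step conditional expectations, martingale concentration via Lemma~\ref{lem:freedman}, and a discrete Gronwall comparison via Lemma~\ref{lem:discrete-gronwall}.

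\textbf{One-step drifts.} Conditional on $\mathcal F_r$, Lemma~\ref{lem:suffix-exchangeability} says $e_{r+1}$ is uniform over the $M_r=\frac{n^2}{2}(1+O(n^{-1}))$ unoffered pairs. The number of offered pairs inside the current skeleton is at most $r=O(n)$, so each relevant set of pairs has size its full count up to $O(n)$.

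\begin{itemize}
\item An I--I edge is always accepted. It reduces $\widehat I$ by $2$ and increases $\widehat C$ and $\widehat Y_1$ by $1$. Its probability is $\binom{\widehat I_r}{2}/M_r=\widehat i_r^2+O(1/n)$.
\item An I--N edge to a star with $\ell\ge2$ leaves is accepted only if the non-isolated endpoint is the center. This follows from Proposition~\ref{pro:structural-invariants}(iii), since for a star $U(C)$ is the center. For a $K_2$ both endpoints are universal.
\item Hence the number of acceptable I--N pairs is $\widehat I_r(\widehat C_r+\widehat Y_{1,r})$ up to an $O(n)$ error: one per star, plus one extra for each $K_2$.
\end{itemize}

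Multiplying by $\Delta s = 2/n$ gives the drifts:
\begin{itemize}
\item $\mathbb E[\Delta\widehat i\mid\mathcal F_r]=\frac{2}{n}\bigl(-\widehat i^2-\widehat i(\widehat c+\widehat y_1)\bigr)+O(n^{-2})$, where the I--I pair removes $2$ isolated vertices with probability $\widehat i^2$.
\item $\mathbb E[\Delta \widehat c\mid\mathcal F_r]=\frac{2}{n}\cdot\frac12\widehat i^2+O(n^{-2})$.
\item $\widehat y_1$ gains $1$ at rate $\frac{2}{n}\cdot\frac12\widehat i^2$. It loses $1$ at rate $\frac2n\cdot 2\widehat i\widehat y_1$, because each $K_2$ offers two attachment pairs per isolated vertex.
\item $\widehat y_2$ gains at rate $\frac2n\, 2\widehat i\widehat y_1$ and loses at rate $\frac2n\,\widehat i\widehat y_2$.
\item For $\ell\ge3$, $\widehat y_\ell$ gains at rate $\frac2n\,\widehat i\widehat y_{\ell-1}$ and loses at rate $\frac2n\,\widehat i\widehat y_\ell$.
\end{itemize}
These are exactly the right-hand sides $F$ of the stated system, up to $O(n^{-2})$ errors. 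All changes in the unnormalized counts are bounded by $2$.

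\textbf{Concentration.} For each coordinate $X\in\{\widehat I,\widehat C,\widehat Y_1,\dots,\widehat Y_L\}$, let
\[
M^X_r:=X_r-X_0-\sum_{j<r}\mathbb E[\Delta X_j\mid\mathcal F_j].
\]
This is a martingale with increments bounded by $4$ and predictable quadratic variation $V_r=O(r/n)=O(S)$, since an accepted change occurs with probability $O(1/n)$ per step. Apply Lemma~\ref{lem:freedman} with $a=n^{2/3}$ and $v=O(S)$, using both signs. Then $\sup_{r\le Sn/2}|M^X_r|\le n^{2/3}$ except with probability $\exp(-\Omega(n^{1/3}))$. A union bound over the $L+3$ coordinates finishes this step.

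\textbf{Comparison with the ODE.} The field $F$ is polynomial, hence Lipschitz with some constant $K$ on the box $[0,1]^{L+2}$. All variables stay in that box, since the total counts are at most $n$. The ODE solution also stays in a bounded region, so we need no stopping time beyond this.

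Write $u_r=\|\widehat{\mathbf x}^{(L)}_r-\mathbf x^{(L)}(s_r)\|$ and decompose it as drift, plus martingale, plus discretization error. The discretization error satisfies $\mathbf x(s_{r+1})-\mathbf x(s_r)=\frac2nF(\mathbf x(s_r))+O(n^{-2})$.

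Summing the drift differences gives a bound of the form $u_{r}\le \sum_{j<r}\frac{2K}{n}u_j+b$, where
\[
b=O(n^{-1/3})+O(S/n)
\]
collects the martingale term (normalized by $n$) and the accumulated $O(n^{-2})$ errors over $Sn/2$ steps.

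Lemma~\ref{lem:discrete-gronwall}, applied to the running maximum $\max_{j\le r}u_j$ (with $a_i=2K/n$), gives $\sup_r u_r\le e^{KS}b\to0$ with high probability. This is the claimed convergence.

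\textbf{Main obstacle.} The delicate point is the acceptance rule, in two places. First, one must check that no offered N--N edge changes the skeleton, which holds by definition. Second, the number of already-offered but still relevant pairs must be $O(n)$, so that the counts of acceptable pairs match $\widehat I\widehat C$ and $\binom{\widehat I}{2}$ up to $O(n)$. The second check holds because offered pairs number $r\le Sn/2$, while the denominators are of order $n^2$. The $K_2$ double-center accounting is what produces the extra $\widehat i\widehat y_1$ terms in $\dot i$ and $\dot y_1$ and the factor $2$ in $\dot y_2$, and it must be handled carefully.
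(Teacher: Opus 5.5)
Your proposal is correct and is essentially the paper's argument. You use the same transition counts and drift estimates, including the double-centre accounting for $K_2$, and then the differential equation method; the paper invokes this as a black box via Warnke's Theorem~2, while you carry out the Freedman-plus-Gronwall comparison by hand, much as the paper itself does later in Lemma~\ref{lem:auxiliary-pre-stability}. There is one slip: an accepted change occurs with probability $\Theta(1)$ per step, not $O(1/n)$ (initially every offer is an accepted I--I edge), so the quadratic variation of your unnormalized martingales is $V_r\le 4r=O(Sn)$ rather than $O(S)$, but with $a=n^{2/3}$ this still gives exactly the $\exp(-\Omega(n^{1/3}))$ bound you state, so nothing downstream changes.
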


\begin{proof}
All pairs counted in the transition estimates have not yet been offered, otherwise, it would have been accepted at its first offer. Thus $\binom{\widehat I_r}{2}$ gives the I-I estimate.  A
star with at least two leaves has one universal vertex, while $K_2$ has two;
therefore the number of acceptable I-N pairs is
$\widehat I_r(\widehat C_r+\widehat Y_{1,r})$, the number of pairs from isolated vertices to
$K_2$'s is $2\widehat I_r\widehat Y_{1,r}$, and the number of pairs from isolated vertices
to the centres of $\ell$-leaf stars is $\widehat I_r\widehat Y_{\ell,r}$.  Dividing by
$M_r=\binom n2-r$ gives the displayed probabilities as follows:

$$
\Pr(\text{next edge is I-I}\mid\mathcal F_r)=\widehat i_r^2+O(n^{-1}),
$$
$$
\Pr(\text{next edge is acceptable I-N}\mid\mathcal F_r)
=2\widehat i_r(\widehat c_r+\widehat y_{1,r})+O(n^{-1}),
$$
$$
\Pr(\text{next edge attaches to a }K_2\mid\mathcal F_r)=4\widehat i_r\widehat y_{1,r}+O(n^{-1}),
$$
$$
\Pr(\text{next edge attaches to an $\ell$-leaf star}\mid\mathcal F_r)
=2\widehat i_r\widehat y_{\ell,r}+O(n^{-1}).
$$

An I-I edge changes
$(\widehat I,\widehat C,\widehat Y_1)$ by $(-2,+1,+1)$; an I-N edge into
$K_2$ changes $(\widehat I,\widehat Y_1,\widehat Y_2)$ by
$(-1,-1,+1)$; and an I-N edge into an $\ell$-leaf star, $\ell\ge2$,
changes $(\widehat I,\widehat Y_\ell,\widehat Y_{\ell+1})$ by $(-1,-1,+1)$.  Combining
these increments with the probabilities gives the following drift estimates:

$$
\mathbb E[\Delta \widehat I_r\mid \mathcal F_r]
=-2\widehat i_r^2-2\widehat i_r(\widehat c_r+\widehat y_{1,r})+O(n^{-1}),
$$
$$
\mathbb E[\Delta \widehat C_r\mid \mathcal F_r]=\widehat i_r^2+O(n^{-1}),
$$
$$
\mathbb E[\Delta \widehat Y_{1,r}\mid\mathcal F_r]=\widehat i_r^2-4\widehat i_r\widehat y_{1,r}+O(n^{-1}),
$$
$$
\mathbb E[\Delta \widehat Y_{2,r}\mid\mathcal F_r]=4\widehat i_r\widehat y_{1,r}-2\widehat i_r\widehat y_{2,r}+O(n^{-1}),
$$
$$
\mathbb E[\Delta \widehat Y_{\ell,r}\mid\mathcal F_r]
=2\widehat i_r(\widehat y_{\ell-1,r}-\widehat y_{\ell,r})+O(n^{-1}),\qquad 3\le \ell\le L.
$$

Since we use the normalized variables and continuous time by $\Delta s=2/n$, dividing their one-step conditional expectations by $2/n$ gives exactly the differential equations. The conditions for applying the differential equation method in the form of Warnke (~\cite{Warnke2019}, Theorem 2) are satisfied, as the following properties hold: each unnormalized coordinate changes by at most $2$, the trend error is $O(n^{-1})$, the drift map is polynomial and hence Lipschitz on fixed bounded boxes, and the initial normalized vector is exactly $(1,0,0,0,\ldots,0)$.  Applying the differential equation method, we get the desired result.
\end{proof}

We next solve the differential method for \((i,c,y_1)\) and the solution helps to obtain the component density weights \((\rho_\ell)_{\ell\ge1}\).

\begin{pro}\label{pro:iskeleton-solutions}
Let
$
        u(s)=\int_0^s i(\tau)\,d\tau.
$
Then, as functions of $u$,
$$
i(u)=\frac{2-u^2}{2}e^{-u},
\qquad
c(u)=\frac{u(u+2)}{4}e^{-u},
\qquad
y_1(u)=\frac{u(2-u)}{4}e^{-u}.
$$
Moreover, \(u(s)\uparrow\sqrt2\) as \(s\to\infty\).
\end{pro}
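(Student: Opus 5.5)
Since $i>0$ along the solution, I will change variables from $s$ to $u(s)=\int_0^s i(\tau)\,d\tau$, so that $du/ds=i$. Writing $'$ for $d/du$, the system for $(i,c,y_1)$ becomes
\[
i'=-i-(c+y_1),\qquad c'=\tfrac12 i,\qquad y_1'=\tfrac12 i-2y_1,
\]
with $(i,c,y_1)(0)=(1,0,0)$. This is a linear constant-coefficient system, so the solution is unique. It therefore suffices to check that the stated functions satisfy it.

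\emph{Verification of the formulas.} Set $w=c+y_1$. Then $w'=i-2y_1$, and from the claimed formulas $w=ue^{-u}$. I first check $c'$: we have $c=\tfrac14(u^2+2u)e^{-u}$, so $c'=\tfrac14\bigl(2u+2-u^2-2u\bigr)e^{-u}=\tfrac14(2-u^2)e^{-u}=\tfrac12 i$.

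Next, for $i=(1-\tfrac{u^2}{2})e^{-u}$ we get $i'=(-u-1+\tfrac{u^2}{2})e^{-u}$. On the other hand, $-i-w=(-1+\tfrac{u^2}{2}-u)e^{-u}$, which matches.

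For $y_1=\tfrac14(2u-u^2)e^{-u}$ we get $y_1'=\tfrac14(2-2u-2u+u^2)e^{-u}=\tfrac14(2-4u+u^2)e^{-u}$. Also $\tfrac12 i-2y_1=\tfrac14(2-u^2)e^{-u}-\tfrac12(2u-u^2)e^{-u}=\tfrac14(2-4u+u^2)e^{-u}$, which matches.

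The initial values at $u=0$ are $(1,0,0)$. For a constructive derivation instead of a check, I would differentiate $i'=-i-w$ to get $i''=-i'-(i-2y_1)$. This can be solved together with $y_1$, since the system is triangular after adding the equations. The characteristic root $-1$ appears with multiplicity, which explains the polynomial-times-$e^{-u}$ form.

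\emph{Limit of $u$.} The main subtlety is justifying that the change of variables is valid for all $s$ and identifying $\lim u$. Since $u$ is increasing while $i>0$, I argue as follows. In $u$-coordinates, $i(u)>0$ exactly for $u<\sqrt2$. The map $u\mapsto s$ is given by $s(u)=\int_0^u dv/i(v)$. Near $v=\sqrt2$, $i$ has a simple zero, because $i'(\sqrt2)=(-\sqrt2-1+1)e^{-\sqrt2}\neq0$. Hence the integral diverges logarithmically as $u\uparrow\sqrt2$.

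Thus $s(u)$ is a strictly increasing bijection from $[0,\sqrt2)$ onto $[0,\infty)$. Its inverse $u(s)$, composed with the formulas above, gives a solution of the original system in $s$. By uniqueness of solutions of the (polynomial, locally Lipschitz) original ODE, this is \emph{the} solution. Consequently $i(s)>0$ for all $s$, $u(s)$ is strictly increasing, and $u(s)\uparrow\sqrt2$ as $s\to\infty$. It never reaches $\sqrt2$ in finite time.
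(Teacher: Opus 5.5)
Your proposal is correct and follows the paper's proof: the change of variables $du/ds=i$ turns the system into a linear one, and direct substitution verifies the three formulas together with the initial data. Your limit argument uses $s(u)=\int_0^u dv/i(v)$, the simple zero of $i$ at $\sqrt2$, and uniqueness for the original ODE. This is a more careful version of the paper's one-line remark that the first zero of $i(u)$ is at $\sqrt2$, and it also justifies that the change of variables holds for all $s$ and that $u$ never reaches $\sqrt2$ in finite time.
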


\begin{proof}
While $i(s)>0$, the change of variable $du/ds=i(s)$ gives
$$
\frac{di}{du}=-i-c-y_1,
\qquad
\frac{dc}{du}=\frac12 i,
\qquad
\frac{dy_1}{du}=\frac12 i-2y_1.
$$
A direct substitution verifies the formulae and the initial
conditions.  The first zero of $i(u)$ is at $u=\sqrt2$, and $i(u)>0$ for
$0\le u<\sqrt2$.  Hence, the supremum of $u(s)$ is $\sqrt{2}$.
\end{proof}
To extend the compact-time convergence to terminal quantities, fix \(\eta>0\) and choose \(S\) such that \(i(S)<\eta\) and \(\sqrt2-u(S)<\eta\). Then, w.h.p., \(\widehat I_{\lfloor Sn/2\rfloor}\le 2\eta n\). Every subsequent I-skeleton edge consumes at least one isolated vertex, hence, at most \(2\eta n\) further I-skeleton edges are accepted. Consequently, the terminal values of \(\widehat C/n\) and of each fixed \(\widehat Y_\ell/n\) differ from their values at time \(\lfloor Sn/2\rfloor\) by \(O(\eta)\). Letting first \(n\to\infty\) and then $\eta \to 0$ yields the terminal limits obtained by the solution at \(u=\sqrt2\).

It follows that the limiting density of I-skeleton components, equivalently the
limiting density of I-I edges, is
$\beta=c(\sqrt2)=\frac{1+\sqrt2}{2}e^{-\sqrt2}.$
Each I-I edge creates exactly one I-skeleton component, and other I-skeleton edges do not change the number of I-skeleton components. Moreover, once all but $o_{\mathbb P}(n)$ isolated vertices have been consumed, we have $2\widehat X_{\mathrm{II}}(N)+\widehat X_{\mathrm{IN}}(N)=n+o_{\mathbb P}(n)$.
For the proof of Theorem~\ref{thm:intro-iskeleton}, we only need the finer distribution of star sizes.

\begin{proof}[proof of Theorem~\ref{thm:intro-iskeleton}]
Propositions~\ref{pro:closed-iskeleton-evolution} and \ref{pro:iskeleton-solutions} give the convergence, the differential equations for $(i,c,y_1)$, the explicit solutions, and the fact that the supremum of $u(s)$ is $\sqrt{2}$. It remains only to prove the terminal limits for the star $\widehat Y_\ell$.

Fix $L$. The joint compact-time convergence of the normalized process
\[
(\widehat i_r,\widehat c_r,
\widehat y_{1,r},\ldots,\widehat y_{L,r})
\]
follows from Proposition~\ref{pro:closed-iskeleton-evolution}. Changing variables to
$u=\int_0^s i(\tau)\,d\tau$ gives
$$
y_1'=\frac12 i-2y_1,
\qquad
y_2'=2y_1-y_2,
\qquad
y_\ell'=y_{\ell-1}-y_\ell\quad(\ell\ge3).
$$
Proposition~\ref{pro:iskeleton-solutions} gives
$ y_1(u)=\frac{u(2-u)}{4}e^{-u}$ .
Then for $\ell=2$, we have
$ (e^u y_2(u))'=2e^u y_1(u)=\frac{u(2-u)}{2},$
and hence we get $y_2(u)=e^{-u}\left(\frac{u^2}{2!}-\frac{u^3}{3!}\right).$
Induction using $y_\ell'=y_{\ell-1}-y_\ell$ gives the solution
$ y_\ell(u)=e^{-u}\left(\frac{u^\ell}{\ell!}-\frac{u^{\ell+1}}{(\ell+1)!}\right)$.

$u=\sqrt2$ gives the component densities $\rho_\ell$'s. 
\end{proof}

The explicit formula has one further consequence that will be used repeatedly
when truncating component sizes in the terminal phase.

\begin{cor}
\label{cor:iskeleton-factorial-tail}
The limiting I-skeleton star distribution has a factorial tail.  More precisely,
for all $\ell\ge2$,
       $ 0\le \rho_\ell
        \le
        e^{-\sqrt2}\frac{(\sqrt2)^\ell}{\ell!}.$        
Consequently, for every fixed $q\ge0$, we have $\sum_{\ell\ge1}\ell^q\rho_\ell<\infty$. Then we can immediately get $\sum_{\ell\ge1}\rho_\ell\binom{\ell}{2}<\infty$ and $\sum_{\ell>L}\ell^q\rho_\ell\longrightarrow 0$ as $L \to \infty$
\end{cor}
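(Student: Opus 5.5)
The plan is to read the bound directly off the closed form from Theorem~\ref{thm:intro-iskeleton}. For $\ell\ge2$,
\[
\rho_\ell=e^{-\sqrt2}\frac{(\sqrt2)^\ell}{\ell!}\left(1-\frac{\sqrt2}{\ell+1}\right).
\]
Since $\ell+1\ge3>\sqrt2$, the factor $1-\sqrt2/(\ell+1)$ lies in $(0,1)$. This gives $0\le\rho_\ell\le e^{-\sqrt2}(\sqrt2)^\ell/\ell!$ at once, and in fact $\rho_\ell>0$. Nonnegativity is also consistent with $\rho_\ell$ being a limit in probability of $\widehat Y_{\ell,N}/n\ge0$.

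For the summability statement, fix $q\ge0$ and split off the term $\ell=1$, which is the finite constant $\rho_1$. The remaining terms are dominated by
\[
\sum_{\ell\ge2}\ell^q\rho_\ell\le e^{-\sqrt2}\sum_{\ell\ge2}\ell^q\frac{(\sqrt2)^\ell}{\ell!}.
\]
This series converges, for example by the ratio test: the ratio of consecutive terms is $\bigl((\ell+1)/\ell\bigr)^q\sqrt2/(\ell+1)\to0$. Alternatively, bound $\ell^q\le C_q\,\ell(\ell-1)\cdots(\ell-\lceil q\rceil+1)$ for $\ell\ge\lceil q\rceil$ (a falling factorial with $\lceil q\rceil$ factors) and compare with derivatives of $e^x$ at $x=\sqrt2$.

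Taking $q=2$ and using $\binom{\ell}{2}\le\ell^2$ gives $\sum_{\ell}\rho_\ell\binom{\ell}{2}<\infty$. Finally, $\sum_{\ell>L}\ell^q\rho_\ell$ is the tail of a convergent series of nonnegative terms, so it tends to $0$ as $L\to\infty$.

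There is no real obstacle here. The only point needing care is the sign of $1-\sqrt2/(\ell+1)$, and that is exactly why the statement restricts to $\ell\ge2$. At $\ell=1$ the factor is $1-\sqrt2/2$, still positive, but $\rho_1$ has a separate formula, so I would simply treat $\ell=1$ as a single finite term.
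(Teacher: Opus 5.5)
Your proposal is correct and matches the paper's proof: you rewrite $\rho_\ell=e^{-\sqrt2}\frac{(\sqrt2)^\ell}{\ell!}\bigl(1-\frac{\sqrt2}{\ell+1}\bigr)$, observe that the last factor lies in $[0,1]$ for $\ell\ge2$, and deduce the moment bounds and the tail statement from convergence of the dominating series. Your added details (the ratio test, treating the separately defined $\rho_1$ as a single finite term, and $\binom{\ell}{2}\le\ell^2$) fill in what the paper summarizes as ``follows from the convergence of the series.''
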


\begin{proof}
The assertion follows immediately from the explicit formula in
Theorem~\ref{thm:intro-iskeleton}. For $\ell\ge2$,
       $ \rho_\ell
        =
        e^{-\sqrt2}
        \frac{(\sqrt2)^\ell}{\ell!}
        \left(1-\frac{\sqrt2}{\ell+1}\right).$
Since $1-\sqrt2/(\ell+1)\in[0,1]$ for $\ell\ge2$, we get
       $ 0\le \rho_\ell
        \le
        e^{-\sqrt2}\frac{(\sqrt2)^\ell}{\ell!}.$

The remaining claims follow from the convergence of the series.
\end{proof}

\section{From the I-skeleton process to the original process}
\label{sec:iskeleton-transfer}
In this section, we compare the original induced-$P_4$-free process $G$ with the auxiliary I-skeleton $\widehat G$. The I-skeleton process records the I--I and I--N edges that it adds, but it ignores edges between two non-isolated vertices. As isolatedness is evaluated in the current graph of each process, the same offered edge may have different types in $G_r$ and $\widehat G_r$. We contain the resulting discrepancies by marking every skeleton component affected by an N--N edge. The main result of this section is that the two processes agree exactly outside the marked region, while the marked vertices, marked pairs, and all fixed polynomial moments of marked original components are $o_{\mathbb P}(n/b_n^M)$ for every fixed $M$.

We first introduce the following deterministic parameters:
$$
        b_n=\left\lceil\frac{4\log n}{\log\log n}\right\rceil,
        \qquad
        A_n=(\log b_n)^2,
        \qquad
        r_n^{\max}=\lfloor A_n n/2\rfloor.
$$
Thus, $s_{r_n^{\max}}=2r_n^{\max}/n=A_n+O(n^{-1})$.

For $0\le r\le r_n^{\max}$, let $\widehat{\mathcal P}_r$ be the component
partition of $\widehat G_r$, including isolated vertices, and put
$
        \widehat R_r=\max_{C\in\widehat{\mathcal P}_r}|C|, \
        \widehat\Lambda_r=\frac{2}{n^2}\sum_{j<r}\widehat I_j.
$
Recall that $M_r=\binom{n}{2}-r$. For a graph \(G\), let
\(S(G):=\sum_{C\in\mathcal C(G)}|C|^2\)
denote its \emph{susceptibility}. We shall define the following stopping times:
$$
\sigma_\Lambda=\inf\{r:\widehat\Lambda_r>3\},\qquad
\sigma_{\mathrm{pair}}=\inf\{r:M_r<n^2/3\},
$$
$$
\sigma_R=\inf\{r:\widehat R_r>b_n\},\qquad
\sigma_S=\inf\{r:S(\widehat G_r)>50n\},
$$
and set
$$
        \sigma=\sigma_\Lambda\wedge\sigma_{\mathrm{pair}}
        \wedge\sigma_R\wedge\sigma_S .
$$

\subsection{Pre-stability of the auxiliary skeleton}

The argument is run until the slowly growing time
$r_n^{\max}=A_nn/2$.  We first strengthen the fixed-compact convergence from
Section~\ref{sec:iskeleton-process} to this interval.  The reason is that
$i(s)$ decays exponentially, so $A_n$ may grow slowly enough that all errors
remain smaller than any negative power of $b_n$.

\begin{lemma}
\label{lem:auxiliary-pre-stability}
Let $\mathbf x(s)=(i(s),c(s),y_1(s))$.  For every fixed $M>0$,
$$
        \sup_{0\le r\le r_n^{\max}}
        \left\|\widehat{\mathbf x}_r-\mathbf x(s_r)\right\|_\infty
        =o_{\mathbb P}(b_n^{-M}),
        \qquad
        \widehat I_{r_n^{\max}}=o_{\mathbb P}(n/b_n^M).
$$
Moreover, $\widehat\Lambda_{r_n^{\max}}\le3$ w.h.p.
\end{lemma}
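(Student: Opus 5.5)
Our plan is to rerun the differential-equation argument of Proposition~\ref{pro:closed-iskeleton-evolution} with explicit, quantitative error bounds on a slowly growing time interval. The key point is that the relevant errors grow like $e^{O(A_n)}$, and this is only subpolynomial in $b_n$. Recall that $A_n=(\log b_n)^2$ and $b_n=\Theta(\log n/\log\log n)$. We therefore work with the three-dimensional system $\mathbf x=(i,c,y_1)$. On the box $[0,1]\times[0,1]\times[0,1]$ its drift map $F$ is polynomial with Lipschitz constant at most some absolute $K$ (say $K\le 10$).

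\textbf{Step 1: martingale concentration.} For each coordinate $Z\in\{\widehat I,\widehat C,\widehat Y_1\}$, set $D_r=Z_r-Z_0-\sum_{j<r}\mathbb E[\Delta Z_j\mid\mathcal F_j]$. This is a martingale with increments bounded by $4$. We apply Lemma~\ref{lem:freedman}, stopped at $r_n^{\max}$, with the crude variance bound $v=16r_n^{\max}=O(A_nn)$ and $a=n^{2/3}$. This gives $\sup_{r\le r_n^{\max}}|D_r|\le n^{2/3}$ with probability $1-\exp(-n^{1/3-o(1)})$. The drift estimates of Proposition~\ref{pro:closed-iskeleton-evolution} stay valid with $O(n^{-1})$ error as long as $M_r=\tfrac{n^2}{2}(1+O(A_n/n))$, which holds for $r\le r_n^{\max}$.

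\textbf{Step 2: discrete Gronwall.} Let $e_r=\|\widehat{\mathbf x}_r-\mathbf x(s_r)\|_\infty$. Each step contributes three errors: the Lipschitz error $(2K/n)e_r$, the Taylor error $O(n^{-2})$ from replacing $\mathbf x(s_{r+1})-\mathbf x(s_r)$ by $\tfrac2nF(\mathbf x(s_r))$ (where $\ddot{\mathbf x}$ is bounded), and the drift error $O(n^{-2})$. The martingale term sums to at most $3n^{2/3}/n$. Applying Lemma~\ref{lem:discrete-gronwall} in its summed form (absorbing the martingale sum into $u$) gives
\[
\sup_{r\le r_n^{\max}}e_r\le e^{2K\cdot r_n^{\max}/n}\bigl(O(n^{-1/3})+O(A_n/n)\bigr)\le e^{KA_n}n^{-1/3+o(1)} .
\]
Since $e^{KA_n}=b_n^{K\log b_n}=n^{o(1)}$, this is $n^{-1/3+o(1)}=o(b_n^{-M})$ for every $M$. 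The first claim follows, because $b_n^M=n^{o(1)}$.

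\textbf{Step 3: decay of $\widehat I$.} We show $i(A_n)$ is super-polynomially small in $b_n$. Near $u=\sqrt2$ we have $\tfrac{du}{ds}=i=\tfrac{2-u^2}{2}e^{-u}\ge c_0(\sqrt2-u)$. Hence $\sqrt2-u(s)\le e^{-c_0s}$ for large $s$, and therefore $i(s)\le Ce^{-c_0s}$. At $s=A_n=(\log b_n)^2$ this gives $i(A_n)\le Cb_n^{-c_0\log b_n}=o(b_n^{-M})$. Combining this with Step 2, $\widehat I_{r_n^{\max}}/n\le i(s_{r_n^{\max}})+o_{\mathbb P}(b_n^{-M})$, which is the second claim.

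\textbf{Step 4: the bound on $\widehat\Lambda$.} We have $\widehat\Lambda_{r_n^{\max}}=\tfrac{2}{n^2}\sum_{j<r_n^{\max}}\widehat I_j$, which is a Riemann sum: it equals $\int_0^{A_n}i(s)\,ds+O(A_n\sup e_r)+O(n^{-1})$. Also $\int_0^\infty i\,ds=u(\infty)=\sqrt2<3$ by Proposition~\ref{pro:iskeleton-solutions}. So w.h.p.\ $\widehat\Lambda_{r_n^{\max}}\le\sqrt2+o(1)\le3$.

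\textbf{Main obstacle.} The delicate step is Step 2. It must be checked that the Gronwall amplification $e^{KA_n}$ stays $n^{o(1)}$, which holds because $A_n$ is only polylogarithmic in $\log n$. It must also be checked that the variables stay in the box where $F$ is Lipschitz, which holds trivially since all normalized variables lie in $[0,1]$. If the Lipschitz constant were larger, we would instead use the change of variables to $u$ on $[0,S]$ for fixed large $S$ and then argue monotone decay of $\widehat I$ afterwards. The cleaner route is the direct Gronwall bound above.
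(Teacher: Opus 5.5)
Your proposal is correct and follows essentially the same route as the paper: exponential decay of $i(s)$ from $i(u)\asymp\sqrt2-u$, Freedman concentration for the martingale parts, a discrete Gronwall bound with amplification $e^{O(A_n)}=n^{o(1)}$, and a Riemann-sum comparison giving $\widehat\Lambda_{r_n^{\max}}\le\sqrt2+o_{\mathbb P}(1)$. Your observation that the normalized variables always lie in $[0,1]^3$ neatly replaces the paper's exit time $\theta$ from $[-1/10,2]^3$. One small correction: on the growing interval the relative error in $M_r$ is $O(A_n/n)$, so the normalized per-step drift error is $O(A_n/n^2)$ rather than $O(n^{-2})$, and the accumulated drift error is $O(A_n^2/n)$ rather than $O(A_n/n)$; this is still $n^{-1+o(1)}$ and does not affect your conclusion.
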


\begin{proof}
Write
$
        \mathbf f(i,c,y)=
        \bigl(-i^2-i(c+y),\; \tfrac12 i^2,\; \tfrac12 i^2-2iy\bigr),
$
so that \(\mathbf x'(s)=\mathbf f(\mathbf x(s))\).  By
Proposition~\ref{pro:iskeleton-solutions},
$$
        i(u)=\frac{2-u^2}{2}e^{-u},\qquad \frac{du}{ds}=i(u),
        \qquad \sup_{s\to \infty}u(s)=\sqrt{2} .
$$
Since $i(u)=((\sqrt{2}+u)e^{-u}/2)(\sqrt{2}-u)$, \(i(u)\asymp \sqrt 2-u\), with $u^\prime=i$, there exist constants \(\alpha_1,\alpha_2>0\) such that
$i(s)\le \alpha_1e^{-\alpha_2s}$.
Hence, for every fixed \(M\),
$$
        b_n^M i(A_n)
        \le \alpha_1\exp\{M\log b_n-\alpha_2(\log b_n)^2\}\to0 .
$$
Also \(s_{r_n^{\max}}=A_n+O(n^{-1})\), and the same bound holds with
\(A_n\) replaced by \(s_{r_n^{\max}}\).

Now we compare the auxiliary process with the trajectory on the growing
interval \(0\le r\le r_n^{\max}\).  Let $K=[-1/10,2]^3$
and let \(\theta\) be the first time when
\(\widehat{\mathbf x}_r=(\widehat i_r,\widehat c_r,\widehat y_{1,r})\) leaves
\(K\).  The deterministic trajectory stays a fixed positive distance from the boundary of $K$ for all \(0\le s\le A_n\).  Moreover, \(\mathbf f\) is Lipschitz
on \(K\).

For \(r\le r_n^{\max}\), we have
$$
        M_r=\frac{n^2}{2}\bigl(1+O(A_n/n+n^{-1})\bigr),
        \qquad
        \frac1{M_r}=\frac2{n^2}\bigl(1+O(A_n/n+n^{-1})\bigr).
$$
Thus the counting estimates from
Proposition~\ref{pro:closed-iskeleton-evolution}, applied directly on this
slowly growing interval, give uniformly for \(r\le r_n^{\max}\wedge\theta\),
$$
        \mathbb E\!\left[
        \widehat{\mathbf x}_{r+1}-\widehat{\mathbf x}_{r}
        \mid \mathcal F_r
        \right]
        =
        \frac{2}{n}\mathbf f(\widehat{\mathbf x}_r)
        +
        O\!\left(\frac{A_n/n+n^{-1}}{n}\right).
$$
Consequently, for the stopped process,
$$
        \widehat{\mathbf x}_{r\wedge\theta}
        =
        \widehat{\mathbf x}_{0}
        +\sum_{j<r\wedge\theta}\frac{2}{n}
             \mathbf f(\widehat{\mathbf x}_j)
        +\mathbf M_r
        +\mathbf E_r ,
$$
where \((\mathbf M_r)\) is a vector martingale and
$\sup_{r\le r_n^{\max}}\|\mathbf E_r\|
        =
        O\!\left({A_n^2}/n\right)$.
Each martingale increment is \(O(n^{-1})\), and the total predictable quadratic
variation of each coordinate up to \(r_n^{\max}\) is \(O(A_n/n)\).  By
Lemma~\ref{lem:freedman} and a union bound over the three coordinates,
$$
        \sup_{r\le r_n^{\max}}\|\mathbf M_r\|_\infty
        =
        O\!\left(\sqrt{\frac{A_n\log n}{n}}\right).
$$

On the other hand, the deterministic solution satisfies the following expansion
$$
        \mathbf x(s_{j+1})-\mathbf x(s_j)
        =
        \frac{2}{n}\mathbf f(\mathbf x(s_j))
        +O(n^{-2})
$$
uniformly for \(0\le j<r_n^{\max}\).  Since \(\mathbf f\) is Lipschitz on \(K\),
the previous two displays and the discrete Gronwall inequality imply that
$$
        \Delta_n:=
        \sup_{0\le r\le r_n^{\max}}
        \bigl\|
        \widehat{\mathbf x}_{r\wedge\theta}-\mathbf x(s_r)
        \bigr\|_\infty
        \le
        e^{CA_n}
        \left[
        O\!\left(\sqrt{\frac{A_n\log n}{n}}\right)
        +
        O\!\left(\frac{A_n^2}{n}\right)
        \right].
$$
Since \(A_n=(\log b_n)^2=O((\log\log n)^2)\), the right-hand side is
\(o(b_n^{-M})\) for every fixed \(M\).  In particular
\(\Delta_n=o_{\mathbb P}(1)\).  Because the deterministic trajectory has a
fixed positive distance inside \(K\), it follows that
$
        \Pr(\theta\le r_n^{\max})\to0 
$.
Hence the same comparison estimate holds without stopping:
$$
        \sup_{0\le r\le r_n^{\max}}
        \left\|\widehat{\mathbf x}_r-\mathbf x(s_r)\right\|_\infty
        =
        o_{\mathbb P}(b_n^{-M}).
$$

Applying this to the first coordinate and using
\(i(s_{r_n^{\max}})=o(b_n^{-M})\), we obtain
$$
        \widehat I_{r_n^{\max}}
        =
        n\widehat i_{r_n^{\max}}
        =
        o_{\mathbb P}(n/b_n^M).
$$

It remains to control \(\widehat\Lambda\). By definition,
the comparison estimate and the Riemann-sum error give that
$$
        \left|
        \widehat\Lambda_{r_n^{\max}}
        -
        \int_0^{s_{r_n^{\max}}}i(s)\,ds
        \right|
        \le
        A_n\Delta_n+O(A_n/n)
        =
        o_{\mathbb P}(1).
$$
But
$$
        \int_0^{s_{r_n^{\max}}}i(s)\,ds
        =
        u(s_{r_n^{\max}})
        \le \sqrt2<3 .
$$
Therefore \(\widehat\Lambda_{r_n^{\max}}\le3\) w.h.p.
\end{proof}

The preceding lemma establishes uniform control of the first-order state variables over the pre-stability interval. To carry out the component-marking argument, we additionally require uniform bounds on the sizes of the skeleton components, together with a corresponding susceptibility bound, throughout the same interval.

\begin{lemma}
\label{lem:auxiliary-component-bounds}
For the auxiliary skeleton process,
$$
        \Pr\!\left(\sup_{0\le r\le r_n^{\max}}\widehat R_r>b_n\right)=o(1),
        \qquad
        \Pr\!\left(\sup_{0\le r\le r_n^{\max}}
        S(\widehat G_r)>50n\right)=o(1).
$$
Moreover, for every fixed $q\ge1$ and every $\eta>0$,
$$
\lim_{L_0\to\infty}\limsup_{n\to\infty}
\Pr\!\left(
\frac1n\sum_{\substack{C\in\widehat{\mathcal P}_{r_n^{\max}}\\ |C|>L_0}}
|C|^q>\eta
\right)=0.
$$
Consequently $\Pr(\sigma\le r_n^{\max})=o(1)$.
\end{lemma}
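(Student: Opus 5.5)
The plan is to exploit the star structure of the skeleton. A component of size $k+1$ in $\widehat G_r$ is a star with $k$ leaves, and it grows only by I--N edges joining an isolated vertex to its centre. A $K_2$ receives such edges at rate $4\widehat i_r/n$ per step, and a larger star at rate $2\widehat i_r/n$. Since $\widehat\Lambda_r$ is the integrated isolated density, the expected number of leaves a star acquires after birth is at most about $2\widehat\Lambda_r\le 6$ before $\sigma_\Lambda$. This suggests a Poisson-type domination: the size of each star is stochastically dominated by $1+\mathrm{Po}(2\Lambda)$ (plus one extra step for the $K_2$ doubling). That domination already gives factorial tails, which is exactly what $b_n=\lceil 4\log n/\log\log n\rceil$ is tuned for.

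\textbf{Maximum size.} I will work with the process stopped at $\sigma_\Lambda\wedge\sigma_{\mathrm{pair}}$. For a fixed star born at step $r_0$, the increments of its leaf count are Bernoulli with conditional probability at most $2\widehat I_r/M_r\le 6\widehat I_r/n^2$. The Bernoulli variables are conditionally dependent, but a standard exponential supermartingale works: take $Z_r=e^{\lambda X_r}\exp\bigl(-(e^\lambda-1)\sum_j p_j\bigr)$. This gives
\[
\Pr(X\ge k)\le e^{-\lambda k}\exp\bigl((e^\lambda-1)\cdot 6\bigr)\le (Ce/k)^k .
\]
Taking $k=b_n-1$ gives $(C/b_n)^{b_n}\le n^{-4+o(1)}$. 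There are at most $n$ stars, so a union bound gives $\Pr(\sup_r\widehat R_r>b_n,\ \sigma_\Lambda\wedge\sigma_{\mathrm{pair}}>r_n^{\max})=o(1)$. Lemma~\ref{lem:auxiliary-pre-stability} gives $\widehat\Lambda_{r_n^{\max}}\le3$ w.h.p. For $r\le r_n^{\max}$ we have $M_r\ge n^2/2-O(A_nn)>n^2/3$, so $\sigma_{\mathrm{pair}}>r_n^{\max}$ deterministically. Together these prove the first claim.

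\textbf{Susceptibility and tails.} Components only grow, and the number of stars is at most $n/2$. Hence $S(\widehat G_r)$ and the tail sums are monotone in $r$, and it suffices to bound them at $r_n^{\max}$. For this I use the first-moment bound: the expected value of $\sum_{C}|C|^q\mathbf 1_{|C|>L_0}$, restricted to the good event, is at most $n\,\mathbb E[(2+\mathrm{Po}(12))^q\mathbf 1_{\cdot>L_0}]$ by the per-star domination above. Markov's inequality then gives the tail statement, since the Poisson tail moment tends to $0$ as $L_0\to\infty$.

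\textbf{The constant 50.} For the susceptibility I need sharper constants. Isolated vertices contribute at most $n$. For the stars I would use the limiting profile: on $[0,r_n^{\max}]$ the quantities $\sum_{\ell\le L}(\ell+1)^2\widehat y_\ell$ are tracked by Proposition~\ref{pro:closed-iskeleton-evolution}, extended to the growing interval as in Lemma~\ref{lem:auxiliary-pre-stability}, and the tail beyond $L$ is controlled by the Markov bound. Alternatively, I can avoid the profile entirely with a crude bound: each vertex attaches to at most one star, and
\[
\sum_C|C|^2\le n+\sum_{\text{stars}}(\ell+1)^2 .
\]
The expected value of the last sum is at most $\beta n\,\mathbb E(2+\mathrm{Po}(6))^2\cdot(1+o(1))$. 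With the birth density $\beta\approx0.29$, this is about $0.29\cdot 100\approx 29<49$, after which I concentrate via a bounded-difference/second-moment argument over stars with sizes truncated at $b_n$.

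\textbf{Main obstacle.} The hardest step is this concentration, because star sizes are dependent through the shared $\widehat I_r$. I would handle it by Freedman's inequality (Lemma~\ref{lem:freedman}) applied to the martingale of increments of $S(\widehat G_{r\wedge\sigma_R})$. Each step changes $S$ by at most $2b_n+1$, and the variance is $O(b_n^2\cdot n\cdot 6/n)$ per unit $\Lambda$, giving fluctuations $O(b_n\sqrt{n})=o(n)$. With all four stopping times shown not to occur before $r_n^{\max}$ w.h.p., $\Pr(\sigma\le r_n^{\max})=o(1)$ follows.
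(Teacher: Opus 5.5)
Your treatment of the maximum component size and of the polynomial tails is the paper's argument. You use an exponential (Poisson-type) supermartingale for the later attachments to each star born by an I--I edge, with total intensity bounded through $\widehat\Lambda\le 3$ before $\sigma_\Lambda\wedge\sigma_{\mathrm{pair}}$. You then take a union bound over at most $n/2$ births at level $b_n$, and a first-moment tail bound plus Markov for the $q$-th moments.

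The routes differ only for the $50n$ bound. The paper never computes $\mathbb E\,S(\widehat G_r)$. Instead it bounds the one-step drift pathwise: $\mathbb E[\Delta S(\widehat G_r)\mid\mathcal F_r]\le 18\widehat I_r/n$, because an I--I edge adds $2$, an attachment to $C$ adds $2|C|$, a star has at most two universal vertices, and $\sum_C|C|\le n$. Hence, up to $T=r_n^{\max}\wedge\sigma_\Lambda\wedge\sigma_{\mathrm{pair}}\wedge\sigma_R$, the whole compensator is at most $27n+18$ deterministically. The paper then applies Freedman to the martingale part, with increments at most $4b_n$ and quadratic variation at most $4b_n\cdot 36n$.

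Your first option, the star profile, is a genuinely different and valid route. Since $S$ is monotone, only $r=r_n^{\max}$ matters. The contribution of components of size at most $L$ is $n\bigl(\sum_{\ell<L}(\ell+1)^2\rho_\ell+o_{\mathbb P}(1)\bigr)$, roughly $3.7n$; this follows from the ODE for $y_1,\dots,y_L$ on the growing interval, or from the $O(\eta)$ terminal argument after a large fixed time. The larger components are handled by your Markov tail bound plus the diagonal step $\limsup_n\Pr(S(\widehat G_{r_n^{\max}})>50n)\le\delta(L)$ for every $L$, where $\delta(L)\to0$ is that tail bound. This route gives the sharp constant but leans on the full profile $(y_\ell)$. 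The paper's route needs nothing beyond $\widehat\Lambda\le 3$.

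Your second, crude option does not close as written. You bound $\mathbb E\sum_{\text{stars}}(\ell+1)^2\lesssim 29n$ and apply Freedman to the martingale part of $S(\widehat G_{r\wedge\sigma_R})$. But Freedman controls only $S_T-S_0-A_T$, where $A_T=\sum_{r<T}\mathbb E[\Delta S(\widehat G_r)\mid\mathcal F_r]$ is random. A bound on $\mathbb E A_T$ gives no high-probability bound on $A_T$, so $S_T\le 50n$ w.h.p.\ does not follow. Concentrating $S$ around its mean by bounded differences or second moments would indeed be hard, for the dependence reason you give.

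The missing observation is that no concentration around the mean is needed. The drift at step $r$ is at most $(\widehat I_r^2+4n\widehat I_r)/M_r=O(\widehat I_r/n)$, so $A_T=O(n\widehat\Lambda_T)=O(n)$ pathwise. With that line added, the Poisson first-moment computation becomes superfluous, and your Freedman step is exactly the paper's proof.
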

\begin{proof}
Set $\tau_0=r_n^{\max}\wedge\sigma_\Lambda\wedge\sigma_{\mathrm{pair}}$.
We first prove the component-size estimates for the process stopped at
\(\tau_0\).  Consider a component at the moment it is created by an accepted I--I
edge. At every later time before \(\tau_0\), it has at most two
universal vertices to which an isolated vertex can connect.  Since
\(M_r\ge n^2/3\) for \(r<\tau_0\), the conditional probability that the next edge attaches to this particular descendant is at most
${2\widehat I_r}/{M_r}\le{6\widehat I_r}/{n^2}$.
Consequently, the total conditional intensity of all later attachments to this
component before \(\tau_0\) is at most
$$
        \sum_{r<\tau_0}\frac{6\widehat I_r}{n^2}
        \le
        3\widehat\Lambda_{\tau_0}
        \le 9+\frac{6}{n}\leq 10 .
$$

We use the standard exponential domination for adapted Bernoulli trials in its
stopped form.  If \(B_C\) is the number of later attachments to any fixed
stopping-time-born component $C$ before \(\tau_0\), then, conditionally on its
birth history,
$$
        \mathbb E\!\left[e^{\theta B_C}\mid \text{birth history}\right]
        \le
        \exp\{10(e^\theta-1)\},
        \qquad \theta>0 .
$$
Consequently, \(B_C\) satisfies the Poisson-type Chernoff bound with mean $10$. In particular, for every \(\beta\ge1\),
$$
        \Pr(B_C\ge \beta\mid \text{birth history})
        \le
        \inf_{\theta>0}
        \exp\{-\theta \beta+10(e^\theta-1)\}.
$$
Taking \(\beta=b_n-2\) and $\theta=\log(({b_n-2})/{10})$ for all large \(n\), gives
$$
        \Pr(B_C\ge b_n-2)
        \le
        \exp\{-(1+o(1))b_n\log b_n\}
        =
        n^{-4+o(1)} .
$$
There are at most \(n/2\) seed components, so a union bound gives
$\Pr\!\left(\sup_{r\le \tau_0}\widehat R_r>b_n\right)=o(1)$.

The same domination gives the polynomial tail.  Since each non-trivial skeleton
component has size \(2+B_C\), and the number of non-trivial components is at most \(n/2\),
for every fixed \(q\ge1\),
$$
 \frac1n
 \mathbb E\sum_{\substack{C\in\widehat{\mathcal P}_{\tau_0}\\ |C|>L_0}}
        |C|^q
 \le
        \frac12\,
        \mathbb E\!\left[(2+B_C)^q\mathbf 1_{\{2+B_C>L_0\}}\right] 
 \le
        \gamma_q\sum_{\beta>L_0/2} \beta^{q-1}
        \Pr(B_C\ge \beta),
$$
where the last bound is the usual tail-sum estimate.  The right-hand side tends
to \(0\) as \(L_0\to\infty\), uniformly in \(n\), because the displayed
Poisson-type tail is exponential.  Markov's inequality then gives
$$
\lim_{L_0\to\infty}\limsup_{n\to\infty}
\Pr\!\left(
\frac1n\sum_{\substack{C\in\widehat{\mathcal P}_{\tau_0}\\ |C|>L_0}}
|C|^q>\eta
\right)=0 .
$$
Lemma~\ref{lem:auxiliary-pre-stability} gives
\(\Pr(\tau_0<r_n^{\max})=o(1)\), and hence the maximum-size and polynomial-tail
claims hold at \(r_n^{\max}\) without stopping.

It remains to prove the susceptibility estimate.  Stop now at $
        T=r_n^{\max}\wedge\sigma_\Lambda
        \wedge\sigma_{\mathrm{pair}}\wedge\sigma_R
$.  For \(r<T\), an accepted I--I edge
changes \(S(\widehat G_r)\) by $2$,
an accepted I--N attachment to a component \(C\) changes \(S(\widehat G_r)\) by $2|C|$.
The conditional probability of an I--I edge is at most
$
        \binom{\widehat I_r}{2}/{M_r}\le{3\widehat I_r^2}/{2n^2},
$
and the conditional probability of an I--N attachment to a fixed non-trivial
component \(C\) is at most
${2\widehat I_r}/{M_r}\le{6\widehat I_r}/{n^2}$
because a star has at most two universal vertices.  Therefore
$$
 \mathbb E[\Delta S(\widehat G_r)\mid\mathcal F_r]
 \le
        2\cdot\frac{3\widehat I_r^2}{2n^2}
        +
        \sum_{C\in\widehat{\mathcal P}_r:\, |C|>1}
        2|C|\cdot\frac{6\widehat I_r}{n^2} 
 \le
        3\frac{\widehat I_r}{n}
        +
        12\frac{\widehat I_r}{n}
        \le
        18\frac{\widehat I_r}{n}.
$$
Here we used \(\widehat I_r\le n\) and
\(\sum_{C:|C|>1}|C|\le n\).  Hence the total predictable drift before \(T\) is
at most
$$
        \sum_{r<T}18\frac{\widehat I_r}{n}
        \le
        18\frac1n\cdot (\frac{3n^2}{2}+n)
        =27n+18 .
$$
We shall use the  bound \(36n\).

Let
$$
        M_r^S = S(\widehat G_{r\wedge T})-S(\widehat G_0)
        - \sum_{j<r\wedge T} \mathbb E[\Delta S(\widehat G_j)\mid\mathcal F_j]
$$
be the martingale part.  Since \(r<T\) implies \(\widehat R_r\le b_n\), every
increment of \(S(\widehat G_r)\) before the stop is at most \(2b_n\); we use the bound
\(4b_n\).  Moreover, because the increments are non-negative,
$$
        \mathbb E[(\Delta S(\widehat G_r))^2\mid\mathcal F_r]
        \le
        4b_n\,\mathbb E[\Delta S(\widehat G_r)\mid\mathcal F_r],
$$
and so the predictable quadratic variation of \(M^S\) up to \(T\) is at most
$4b_n\cdot 36n=144nb_n$.
Freedman's inequality therefore gives
$$
        \Pr\!\left(M_T^S>12n\right)
        \le
        \exp\!\left\{
        -\frac{(12n)^2}{2(144nb_n+4b_n\cdot12n/3)}
        \right\}
        =o(1).
$$
Since \(S_0=n\), we get
$
        S_T
        \le
        n+36n+12n
        \le
        50n
$
w.h.p.  Together with the maximum-size estimate already proved,
this implies
$$
        \Pr\!\left(
        \sup_{0\le r\le r_n^{\max}}S(\widehat G_r)>50n
        \right)=o(1).
$$
Combining this with
\(\Pr(\sigma_\Lambda\le r_n^{\max})=o(1)\), the deterministic fact
\(\sigma_{\mathrm{pair}}>r_n^{\max}\) for all large \(n\), and the
maximum-size estimate, we obtain
$\Pr(\sigma\le r_n^{\max})=o(1)$.
\end{proof}

\subsection{The marked region and the coupling invariant}
With the auxiliary skeleton process determined, we can now compare it with the original process. The possible discrepancies are exact inside a marked region, and the
probability estimates show that this marked region remains tiny.

Denote all marked components in $\widehat{G}_r$ by $\widehat B_r$. Initialize $\widehat B_0=\varnothing$. Let $e_{r+1}=xy$. We define the evolution of the marked region by the following rules.

\begin{enumerate}[label=(B\arabic*)]
\item If $x$ and $y$ lie in the same unmarked non-trivial I-skeleton process
component $C$, mark every vertex of $C$.
\item If one endpoint lies in the marked region and the other lies in a
unmarked I-skeleton component $C$, mark every vertex
of $C$.
\end{enumerate}

Rule (B1) marks a good non-trivial skeleton process
component when an internal N-N edge may create a discrepancy between $G$ and
$\widehat G$.  Rule (B2) absorbs any good component touched by an already marked
component, ensuring that discrepancies cannot leak back into the good region.

When \(e_r\) is offered to the process, we first apply the marking rules (B1) and (B2). Let \(\widehat B_r^-\) denote the resulting marked set, immediately before the graph updates. After the corresponding updates of the original process and the I-skeleton process have been performed, we denote the marked set by \(\widehat B_r\). The graph updates themselves do not introduce any additional marked vertices, so \(\widehat B_r=\widehat B_r^-\) as vertex sets; the distinction in notation merely indicates whether the graph update at step \(r\) has already been performed.

The following deterministic lemma shows that the marked region contains all discrepancies between the original process and the auxiliary skeleton process.

\begin{lemma}
\label{lem:coupling-invariant}
For every $r\ge0$, the following assertions hold.
\begin{enumerate}[label=(\roman*)]
\item $\widehat B_r$ is a union of connected components of both $\widehat G_r$
and $G_r$.
\item Every unmarked component $C$ of $\widehat G_r$ is the vertex set of a
connected component of $G_r$.
\item For every unmarked component $C$, $G_r[V(C)]=\widehat G_r[V(C)]$.
\item Conversely, every connected component of $G_r$ disjoint from
$\widehat B_r$ corresponds to a unique unmarked component of
$\widehat G_r$.
\end{enumerate}
\end{lemma}

\begin{proof}
We prove the assertions by induction on \(r\).  They are immediate at \(r=0\).
Assume that the assertions hold at time \(r-1\), and let
\(e_r=xy\) be the next offered edge.  By the induction hypothesis and
the whole-component marking rules, \(\widehat B_r^-\) is a union of
components of both \(G_{r-1}\) and \(\widehat G_{r-1}\).

First, we offer the edge $xy$ and apply the marking rules. If there is at least one endpoint in \(\widehat B^-_r\), say $x$, then the component containing $y$ has already been marked, or has just been marked at this step by (B2). Thus, the update cannot create a discrepancy on any unmarked component, and every possible discrepancy remains inside \(\widehat B^-_r\).

Then we update the graph. It remains only to check the case in which both endpoints lie outside \(\widehat B^-_r\). 
By (B1), they do not lie in the same non-trivial good component. Hence, the offer is one of
the three types: I--I, I--N with a singleton and a non-trivial good component, or N--N with two distinct non-trivial unmarked components. In all three cases the original update agrees with the auxiliary
update on \(V\setminus \widehat B^-_r\), by the defining rules of the auxiliary skeleton
and Proposition~\ref{pro:structural-invariants}(i),(iii).

As the update does not change anything in the unmarked components, the same component closure holds in the original graph. Therefore, the induced graphs and the component correspondence remain identical outside \(\widehat B_{r+1}\), completing the induction.
\end{proof}

 If a component is unmarked, we call it a good component at the current stage. It remains to bound the size of the marked component generated by the marking rules. We set
$$
        Z_r=|\widehat B_r|,
        \qquad
        Q_r=\sum_{\substack{C\in\widehat{\mathcal P}_r:\,C\subseteq\widehat B_r}}
        \binom{|C|}{2}.
$$
For fixed $q\ge1$, also put
$$
        R_{q,r}=\sum_{\substack{D\in\mathcal C(G_r):\,V(D)\subseteq\widehat B_r}}
        |D|^q.
$$

\begin{lemma}
\label{lem:marked-region-control}
For every fixed $M>0$,
$$
        \sup_{0\le r\le r_n^{\max}}Z_r=o_{\mathbb P}(n/b_n^M),
        \qquad
        \sup_{0\le r\le r_n^{\max}}Q_r=o_{\mathbb P}(n/b_n^M).
$$
Moreover, for every fixed $q\ge1$ and $M>0$,
$$
        \sup_{0\le r\le r_n^{\max}}R_{q,r}=o_{\mathbb P}(n/b_n^M).
$$
\end{lemma}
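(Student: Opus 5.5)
We work on the event $\{\sigma>r_n^{\max}\}$, which by Lemma~\ref{lem:auxiliary-component-bounds} has probability $1-o(1)$. Equivalently, we stop every quantity at $\sigma\wedge r_n^{\max}$. Before $\sigma$, every skeleton component has size at most $b_n$, the susceptibility satisfies $S(\widehat G_r)\le 50n$, $M_r\ge n^2/3$, and $\widehat\Lambda_r\le 3$. The plan is to show that $\mathbb E Z_{\sigma\wedge r_n^{\max}}$ is at most $n\,b_n^{O(1)}$ times a quantity that is $o(b_n^{-M'})$ for every $M'$. Markov's inequality then gives the claims, since $Z_r$, $Q_r$, $R_{q,r}$ are nondecreasing in $r$: marks are never removed and marked components only grow by absorbing whole components. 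Hence the suprema over $r\le r_n^{\max}$ are attained at $r_n^{\max}$.

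\textbf{Step 1 (seeds from (B1)).} Rule (B1) fires only when the offered pair lies inside an unmarked non-trivial skeleton component $C$ and has not yet been offered. The conditional probability of this is at most $\sum_C\binom{|C|}{2}/M_r\le 3S(\widehat G_r)/(2n^2)\le 75/n$, and each firing marks at most $b_n$ vertices. The key point is that the pairs inside $C$ which are not skeleton edges are leaf--leaf pairs, and such pairs only exist once $C$ has at least two leaves, that is, after an attachment. So the expected total seed mass over times $r\le r_n^{\max}$ would naively be $O(A_n b_n)$, which is too large. We therefore need a better count.

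To get it, split time at $r_0=\lfloor A n/2\rfloor$ for a large constant $A$. The expected number of seeds is $\sum_r \E[\sum_C\binom{|C|}{2}]/M_r=O(r/n)$, and this cannot be made small, because (B1) events genuinely occur at linear rate. So the $o(n/b_n^M)$ claim must rely instead on the smallness of the \emph{total marked volume}, and the bound must come from the fact that B1 seeds are \emph{not} present in large number. Concretely, a component is marked only after an internal offered pair hits it. The expected number of such hits before $r_n^{\max}$ is $\sum_C\binom{|C|}2\cdot r_n^{\max}/M\approx A_n\sum_C\binom{|C|}{2}/n$, which is only $O(A_n)$ per unit mass.

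We therefore recheck the scaling. $Z_r$ counts vertices, and the number of seeded components is $O(r/n)\cdot$(expected $\binom{|C|}{2}$ sum)$/n\cdot n$. The total is $\sum_r 75/n\cdot b_n=O(A_nb_n)$ vertices in expectation. This is $O(A_nb_n)=o(n/b_n^M)$ trivially, since $A_nb_n\,b_n^{M}=n^{o(1)}$. So Step 1 is cheap: the expected seed mass is $n^{o(1)}$, not linear.

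\textbf{Step 2 (growth via (B2)).} Each marked vertex $x$ absorbs a component when an offered pair joins $x$ to an unmarked vertex. The probability of this per step is at most $Z_r\cdot n/M_r\le 3Z_r/n$, and each absorption adds at most $b_n$ vertices. This gives
\[
\E[\Delta Z_r\mid\mathcal F_r]\le \frac{75b_n}{n}+\frac{3b_nZ_r}{n}.
\]
Lemma~\ref{lem:discrete-gronwall}, applied to $u_r=\E Z_{r\wedge\sigma}$, yields
\[
\E Z_{r_n^{\max}\wedge\sigma}\le e^{3b_nA_n/2}\cdot 75b_nA_n/2 .
\]
The growth factor $e^{O(b_nA_n)}$ is the main obstacle. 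With $b_n\approx 4\log n/\log\log n$ it equals $n^{O((\log\log\log n)^2/\log\log n)}=n^{o(1)}$, which is still $o(n^{1-\epsilon})$, and $n/b_n^M\ge n^{1-o(1)}$, so Markov's inequality works. To do this cleanly, I would use a sharper growth rate: absorption happens through an edge from $x$ to an unmarked vertex, giving $\sum_{x\in\widehat B}\sum_C|C|\cdot$ (the size added), with expectation weighted by $S/n$. This gives the rate $3\cdot 50\,Z_r/n$ without the $b_n$ factor, and hence a factor $e^{O(A_n)}=b_n^{o(1)}$ polynomially in $b_n$.

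\textbf{Step 3 ($Q$ and $R_q$).} Since every marked skeleton component has size at most $b_n$, we get $Q_r\le b_nZ_r$. For $R_{q,r}$, we use Lemma~\ref{lem:coupling-invariant}(i): every original component inside $\widehat B_r$ is a union of marked skeleton components. Its size is bounded by the size of the cluster of components glued together along (B1)/(B2) chains. We bound $\sum_D|D|^q\le Z_r^{q}$ only crudely and then refine. Instead of that crude bound, we track the clusters of the marking process as a branching-type object: each original marked component lies in one marking cluster, whose size is dominated by a subcritical-in-time branching process of total mean $b_n^{o(1)}$ with factorial-type tails. From this, $\E\sum|D|^q\le n^{o(1)}$ times the number of seeds, and Markov's inequality finishes the argument. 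Controlling these cluster moments is the delicate part.
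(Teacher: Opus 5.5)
\textbf{Verdict: there is a genuine gap in Step 3.} Your Steps 1--2, once you use the refined rate, are the paper's argument. The $R_{q,r}$ bound, however, is not proved.

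\textbf{The $Z_r$ and $Q_r$ parts.} These match the paper's argument:
\begin{itemize}
\item (B1) contributes $O(b_n/n)$ per step.
\item (B2) contributes $O(Z_r/n)$ per step. This comes from weighting the absorbed mass $|C|$ by the hitting probability $O(Z_r|C|/n^2)$ and using $S(\widehat G_r)\le 50n$.
\item Lemma~\ref{lem:discrete-gronwall} then gives $\mathbb{E}Z_{r_n^{\max}\wedge\sigma}\le e^{O(A_n)}O(A_nb_n)=n^{o(1)}$.
\item Finally $Q_r\le b_nZ_r$, and monotonicity plus Markov's inequality finish.
\end{itemize}
The refined rate is essential, not a cosmetic improvement. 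Since $\log b_n\sim\log\log n$, one has $A_n\sim(\log\log n)^2$ and $b_nA_n\sim 4\log n\log\log n$. So your crude factor $e^{3b_nA_n/2}$ is $n^{\Theta(\log\log n)}$, far larger than $n$, and Markov's inequality gives nothing. Your evaluation $n^{O((\log\log\log n)^2/\log\log n)}$ is an arithmetic slip. Likewise $e^{O(A_n)}=b_n^{O(\log b_n)}$ is not $b_n^{o(1)}$. What saves the argument is only that $e^{O((\log\log n)^2)}=n^{o(1)}$ while $n/b_n^M=n^{1-o(1)}$.

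\textbf{The gap: the $R_{q,r}$ bound.} Step 3 rests on the claim that every original component inside $\widehat B_r$ is a union of marked skeleton components. Lemma~\ref{lem:coupling-invariant}(i) does not say this. It only says that $\widehat B_r$ as a whole is a union of components of both graphs, and inside the marked region the two processes can partition vertices differently. Here is a counterexample:
\begin{itemize}
\item A leaf--leaf pair $yz$ of a star with centre $c$ triggers (B1). It is accepted in $G$, making $y$ universal, and ignored in $\widehat G$.
\item A later offer $xy$ with $x$ isolated is accepted in $G$ but rejected in $\widehat G$.
\item A still later offer $xc'$, with $c'$ the centre of another marked star, is accepted in $\widehat G$ but rejected in $G$.
\end{itemize}
Then $\{c,y,z,x\}$ is an original component that is not a union of skeleton components. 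So original marked components are controlled neither by $b_n$ (the stopping time $\sigma_R$ only constrains skeleton components) nor by your marking clusters. The branching-process domination and cluster-moment bounds you invoke are not defined or proved; you yourself leave them as ``the delicate part''.

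\textbf{How the paper closes this.} It avoids clusters entirely with a direct one-step recursion for $R_{q,r}$:
\begin{itemize}
\item Each newly absorbed good component contributes $|C|^q$, with conditional expectation $O(b_n^q/n+b_n^{q-1}Z_r/n)$.
\item By Proposition~\ref{pro:structural-invariants}(i), the only accepted mergers inside the marked region are I--I births or I--N attachments. A marked original component $D$ is hit from an isolated vertex with probability $O(|D|/n)$ and gains at most $(|D|+1)^q-|D|^q-1=O(|D|^{q-1})$. So the update term is $O(R_{q,r}/n)$, with no size bound on $D$ needed.
\item Gronwall over $O(A_nn)$ steps then gives $\mathbb{E}R_{q,r_n^{\max}\wedge\sigma}\le e^{O(A_n)}\,O(A_nb_n^q+A_nb_n^{q-1}\sup_r\mathbb{E}Z_{r\wedge\sigma})=n^{o(1)}$.
\item Monotonicity of $R_{q,r}$ and Markov's inequality conclude.
\end{itemize}
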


\begin{proof}
We work first up to $r_n^{\max}\wedge\sigma$, where
$\widehat R_r\le b_n$, $M_r\ge n^2/3$, and
$S(\widehat G_r)\le50n$.  The marked region is a union of current
I-skeleton components, so every skeleton component is either contained in the
marked region or wholly unmarked. Conditional on \(\mathcal F_r\), the next offered pair is
uniform over the exact \(M_r\) unrevealed pairs.

For rule (B1), the next edge must have both endpoints in the same good
non-trivial skeleton component. Since \(M_r\ge n^2/3\), the conditional expected number of vertices added is at most
\[
        \sum_{C\text{ good},\ |C|\ge2}|C|\frac{\binom{|C|}{2}}{M_r}
        \le \frac12\sum_{C\text{ good}} |C|\frac{|C|^2}{M_r}
        \le \frac{b_n}{2M_r} \sum_{C\text{ good}} |C|^2 \le \frac {50nb_n}{n^2/3}
        \le \gamma_1\frac{b_n}{n},
\]
where $\gamma_1$ is a constant.

For rule (B2), the conditional expected number of vertices added is at most
\[
        \sum_{C\text{ good}}|C|\frac{2Z_r|C|}{M_r} =\frac{2Z_r}{M_r}\sum_{C\text{ good}}|C|^2
        \le \gamma_2\frac{Z_r}{n},
\]
where $\gamma_2$ is a constant.

Adding the two possible marking contributions, we obtain,
\[
        \mathbb E[Z_{(r+1)\wedge\sigma}\mid\mathcal F_r]
        \le\left(1+\frac{\gamma_2}{n}\right)Z_{r\wedge\sigma}
        +\gamma_1\frac{b_n}{n}.
\]
Iterating over $O(A_nn)$ steps with Lemma~\ref{lem:discrete-gronwall} gives
\[
        \mathbb E Z_{r_n^{\max}\wedge\sigma}
        \le \gamma_3 e^{\gamma_3 A_n}A_nb_n,
\]
where $\gamma_3$ is a constant.

Since $A_n=(\log b_n)^2$ and $b_n\asymp \log n/\log\log n$, we have
$e^{\gamma_3A_n}A_nb_n=n^{o(1)}=o(n/b_n^M)$ for every fixed $M$.  As
$Z_{r\wedge\sigma}$ is non-decreasing,
\[
\Pr\!\left(\sup_{r\le r_n^{\max}}Z_r>\frac{n}{b_n^M}\right)
\le
\Pr(\sigma\le r_n^{\max})
+
\Pr\!\left(Z_{r_n^{\max}\wedge\sigma}>\frac{n}{b_n^M}\right)
=o(1),
\]
by Markov's inequality and Lemma~\ref{lem:auxiliary-component-bounds}.  This
proves the first assertion.

On $\{\sigma>r_n^{\max}\}$, every marked component has size at most
$b_n$, and hence $Q_r\le b_nZ_r/2$ for all $r\le r_n^{\max}$.  Therefore
\[
\Pr\!\left(\sup_{r\le r_n^{\max}}Q_r>\frac{n}{b_n^M}\right)
\le
\Pr(\sigma\le r_n^{\max})
+
\Pr\!\left(\sup_{r\le r_n^{\max}}Z_r>\frac{2n}{b_n^{M+1}}\right)
=o(1),
\]
which proves the second assertion.

Fix $q\ge1$. For a graph $H$ and a set $B$ which is a union of
components of $H$, write
\[
        R_q(H,B):=\sum_{\substack{D\in\mathcal C(H):\,V(D)\subseteq B}} |D|^q .
\]
Thus $R_{q,r}=R_q(G_r,\widehat B_r)$.  We again estimate one step on the event
$r<\sigma$.

Let $xy$ be the next offered edge. By Lemma~\ref{lem:coupling-invariant}, \(\widehat B^-_r\) is a union of current original components, and after the update it remains a union of original components. Moreover, all possible discrepancies created at this step are confined to \(\widehat B^-_r\). Hence
\[
        R_{q,r}-R_{q,r-1}
        =
        \bigl(R_q(G_{r-1},\widehat B^-_r)-R_q(G_{r-1},\widehat B_{r-1})\bigr)
        +
        \bigl(R_q(G_{r},\widehat B_r)-R_q(G_{r-1},\widehat B^-_r)\bigr).
\]
The first term records the contribution of old original components that become marked before the update, while the second records the contribution of the graph update inside the already enlarged marked set.

We first bound the marking term. The term \(R_q(G_{r-1},\widehat B^-_r)-R_q(G_{r-1},\widehat B_{r-1})\) is controlled by the same two marking mechanisms as \(Z_r\). A (B1) marking can occur only when the offered pair lies inside an unmarked non-trivial component $C$, whose increase is $|C|^q$. Thus, the conditional expected contribution of (B1) is at most
\[
        \frac1{M_{r-1}}\sum_{C\text{ good},\,|C|\ge2}|C|^q\binom{|C|}{2}
        \le \gamma_q\frac{b_n^q}{n},
\]
where $\gamma_q$ is a constant and the estimate uses $|C|\le b_n$, $M_{r-1}\ge n^2/3$, and $\sum_C |C|^2\le50n$.

For (B2), suppose that an unmarked component \(C\) is absorbed. The insertion contributes \(|C|^q\), and the conditional probability of this fixed \(C\) is at most \(2Z_{r-1}|C|/M_{r-1}\).  Hence, the total conditional contribution of (B2) is at most
\[
        \frac{2Z_{r-1}}{M_{r-1}}\sum_{C\text{ good}} |C|^{q+1}
        \le
        \frac{2Z_{r-1}}{M_{r-1}} b_n^{q-1}\sum_{C\in\widehat{\mathcal P}_{r-1}}|C|^2
        \le
        \gamma_q\frac{b_n^{q-1}Z_{r-1}}{n}.
\] 
Therefore
\[
        \mathbb E\!\left[
        R_q(G_{r-1},\widehat B^-_r)-R_q(G_{r-1},\widehat B_{r-1})
        \,\middle|\,\mathcal F_{r-1}
        \right]
        \le
        \gamma_q\frac{b_n^q}{n}
        +\gamma_q\frac{b_n^{q-1}Z_{r-1}}{n}.
\]

It remains to bound the update term $R_q(G_{r},\widehat B_r)-R_q(G_{r-1},\widehat B^-_r)$. After (B1)--(B2) have been applied, the offered pair has either both endpoints in \(\widehat B^-_r\) or both endpoints in \(V\setminus \widehat B^-_r\).  In the latter case the value of \(R_q(G_r,\widehat B^-_r)\) is unchanged.  If (B1) is responsible for the enlargement, then the offered pair lies inside a single good component which has just been inserted into \(\widehat B^-_r\), and again the component partition inside \(\widehat B^-_r\) does not change.

Thus, a non-zero update contribution can only come from an accepted component merger inside \(\widehat B^-_r\) involving the old marked region. By Proposition~\ref{pro:structural-invariants}(i)(iii), every such accepted merger has at least one isolated endpoint.  Hence, it is either an I--I merger or an I--N attachment.

The I--I contribution is at most
\[
        \gamma_q\frac{Z_{r-1} n}{M_{r-1}}
        \le  \gamma_q\frac{Z_{r-1}}{n}
        \le \gamma_q\frac{R_{q,r}}{n}.
\]

For an I--N attachment, let \(D\) be the non-trivial component involved. If \(|D|=x\), the increase in \(R_q\) is at most $(x+1)^q-x^q-1\le \gamma_q(x^{q-1}+1)$. If $D$ is marked before $xy$ is offered, then summing over the isolated endpoint gives at most
\[
\begin{aligned}
        \frac{\gamma_q n}{M_{r-1}}
        \sum_{\substack{D\in\mathcal C(G_r):\,
        V(D)\subseteq\widehat B_r}}
        |D|\bigl(|D|^{q-1}+1\bigr)
        &\le
        \gamma_q \frac{R_{q,r}}{n}.
\end{aligned}
\]
If \(D\) was previously unmarked, then the isolated endpoint must already lie in
\(\widehat B_r\), and \(D\) is absorbed by (B2) before the update.  Hence
\[
\begin{aligned}
        \frac{\gamma_qZ_r}{M_r}
        \sum_{\substack{D\text{ good}\\ |D|\ge2}}
        |D|\bigl(|D|^{q-1}+1\bigr)
        &\le
        \gamma_q\frac{b_n^{q-1}Z_r}{n}.
\end{aligned}
\]
Therefore
\[
        \mathbb E\!\left[
        R_q(G_{r},\widehat B^-_r)-R_q(G_{r-1},\widehat B^-_r)
        \,\middle|\,\mathcal F_r
        \right]
        \le
        \gamma_q\frac{R_{q,r}}{n}
        +
        \gamma_q\frac{b_n^{q-1}Z_r}{n}.
\]

Combining the marking and update bounds yields
\[
\mathbb E[R_{q,(r+1)\wedge\sigma}\mid\mathcal F_r]
\le
\left(1+\frac{\gamma_q}{n}\right)R_{q,r\wedge\sigma}
+\gamma_q\frac{b_n^q}{n}
+\gamma_q\frac{b_n^{q-1}Z_{r\wedge\sigma}}{n}.
\]
Using the already proved bound
$\sup_{r\le r_n^{\max}}\mathbb E Z_{r\wedge\sigma}\le Ce^{CA_n}A_nb_n$ and
iterating this recursion over $O(A_nn)$ steps with Lemma~\ref{lem:discrete-gronwall} gives
\[
        \mathbb E R_{q,r_n^{\max}\wedge\sigma}
        \le \gamma_q e^{\gamma_q A_n}
        \left(A_nb_n^q+e^{CA_n}A_n^2b_n^q\right)
        =n^{o(1)}.
\]
In particular this is $o(n/b_n^M)$ for every fixed $M$.  Finally, since
$R_{q,r\wedge\sigma}$ is non-decreasing,
\[
\Pr\!\left(
        \sup_{r\le r_n^{\max}}R_{q,r}>\frac{n}{b_n^M}
\right)
\le
\Pr(\sigma\le r_n^{\max})
+
\Pr\!\left(
        R_{q,r_n^{\max}\wedge\sigma}>\frac{n}{b_n^M}
\right)
=o(1),
\]
again by Lemma~\ref{lem:auxiliary-component-bounds} and Markov's inequality, which completes the proof.
\end{proof}

\subsection{Skeleton-to-process transfer}

We now combine the deterministic coupling invariant with the marked-region estimates to obtain a transfer principle from the auxiliary skeleton process to the original process. This result will be used in the subsequent analysis of the terminal graph.

For \(ab\in\{\mathrm{II},\mathrm{IN}\}\), let \(X_{ab}^{G}(r)\)
denote the number of edges among \(e_1,\ldots,e_r\) that are accepted in
the original process and have type \(ab\) immediately before they are
offered. 

\begin{thm}
\label{thm:skeleton-comparison}
Couple $G$ and $\widehat G$ by the same uniformly random edge ordering.  For
every $r\le r_n^{\max}$, the two processes agree componentwise and edgewise
outside $\widehat B_r$. Then, for every fixed connected $F$ and $M>0$,
$$
        \sup_{0\le r\le r_n^{\max}}
        |C_F(G_r)-C_F(\widehat G_r)|
        =o_{\mathbb P}(n/b_n^M).
$$
Moreover,
\[
\sup_{0\le r\le r_n^{\max}}
\left(
|X_{\mathrm{II}}^{G}(r)-\widehat X_{\mathrm{II}}(r)|
+
|X_{\mathrm{IN}}^{G}(r)-\widehat X_{\mathrm{IN}}(r)|
\right)
=o_{\mathbb P}(n/b_n^M).
\]
Finally, for every fixed \(q\ge1\),
\[
\sup_{0\le r\le r_n^{\max}}
\sum_{\substack{D\in\mathcal C(G_r):\,
V(D)\subseteq\widehat B_r}}
|D|^q
=o_{\mathbb P}(n/b_n^M).
\]
\end{thm}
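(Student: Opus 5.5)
The plan is to derive all three assertions from the deterministic coupling invariant, Lemma~\ref{lem:coupling-invariant}, together with the probabilistic bounds of Lemma~\ref{lem:marked-region-control}. The first sentence of the theorem, that the processes agree componentwise and edgewise outside $\widehat B_r$, is just a restatement of Lemma~\ref{lem:coupling-invariant}(ii)--(iv). By (ii) and (iii), every unmarked component $C$ of $\widehat G_r$ is the vertex set of a component of $G_r$ with the same induced graph. By (iv), every component of $G_r$ disjoint from $\widehat B_r$ arises this way. By (i), every other component of either graph lies inside $\widehat B_r$.

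For the component counts, fix a connected $F$. Components disjoint from $\widehat B_r$ are matched bijectively and isomorphically, so they contribute equally to $C_F(G_r)$ and $C_F(\widehat G_r)$. The remaining components of each graph lie inside $\widehat B_r$, and each has at least one vertex. Hence
\[
|C_F(G_r)-C_F(\widehat G_r)|\le 2|\widehat B_r|=2Z_r .
\]
Taking the supremum over $r\le r_n^{\max}$ and applying Lemma~\ref{lem:marked-region-control} gives $o_{\mathbb P}(n/b_n^M)$. The third assertion is literally $\sup_r R_{q,r}=o_{\mathbb P}(n/b_n^M)$, which is the last claim of Lemma~\ref{lem:marked-region-control}.

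The edge-type counts are the main point needing care, since the types of an edge are evaluated in different graphs. The step I would argue is this: consider an offered edge $e_j=xy$ with $j\le r$ whose endpoints both lie outside $\widehat B_j^-$ after the marking rules are applied. By the proof of Lemma~\ref{lem:coupling-invariant}, the original and auxiliary processes then take the same decision on it. Its type (I--I, I--N or N--N) is also the same in $G_{j-1}$ and $\widehat G_{j-1}$, because by parts (ii)--(iii) the isolatedness of a vertex outside the marked set is the same in both graphs. So such edges contribute identically to $X^G_{ab}$ and $\widehat X_{ab}$.

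Every other offered edge has at least one endpoint in $\widehat B_j^-$, and so, after the marking rules are applied, both endpoints lie in $\widehat B_j\subseteq\widehat B_r$. Each accepted I--I or I--N edge in either process reduces the number of isolated vertices by at least one. Moreover, an isolated vertex is never recreated, because edges are never deleted. Therefore, in each process, the number of accepted I--I or I--N edges with both endpoints in $\widehat B_r$ is at most $|\widehat B_r|$. It follows that the discrepancy in the combined sum is at most $2Z_r$, or $4Z_r$ counting both types, and Lemma~\ref{lem:marked-region-control} again gives $o_{\mathbb P}(n/b_n^M)$ uniformly in $r\le r_n^{\max}$.

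The only delicate point is the claim that type and decision agree for unmarked offers. I would verify it case by case, exactly as in the inductive step of Lemma~\ref{lem:coupling-invariant}. The one nontrivial case is an N--N edge between two distinct good non-trivial components: the auxiliary process rejects it by definition, and the original process rejects it by Proposition~\ref{pro:structural-invariants}(i).
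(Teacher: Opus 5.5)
Your proposal is correct and follows the paper's proof. Both derive all three assertions from the deterministic invariant of Lemma~\ref{lem:coupling-invariant} together with the $Z_r$ and $R_{q,r}$ bounds of Lemma~\ref{lem:marked-region-control}. Your observation that each accepted I--I or I--N edge permanently consumes an isolated vertex bounds the edge-type discrepancy by $2Z_r$ directly, so you do not need the extra $Q_r$ term that the paper's $O(Z_r+Q_r)$ estimate invokes.
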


\begin{proof}
Lemma~\ref{lem:coupling-invariant} localizes all discrepancies between the
true process and the skeleton process to the marked region.  Consequently, the error in every fixed component count is \(O(Z_r)\),
the error in the accepted I--I and I--N counts is \(O(Z_r+Q_r)\), and
the final assertion is exactly the \(R_{q,r}\)-bound in
Lemma~\ref{lem:marked-region-control}.  Applying that lemma proves all
three assertions.
\end{proof}

As a first result, the I--I and I--N edge counts in the skeleton process have the same first-order limits in the original process. For $ab\in\{\mathrm{II},\mathrm{IN}\}$, write $X_{ab}:=X_{ab}^{G}(N)$ for the corresponding terminal counts.

\begin{pro}\label{pro:iskeleton-edge-counts}
For the original induced-$P_4$-free process,
$$
        \frac{X_{\mathrm{II}}}{n}\xrightarrow{\mathbb P}\beta,
        \qquad
        \frac{X_{\mathrm{IN}}}{n}\xrightarrow{\mathbb P}1-2\beta,
$$
Consequently,
$$
        \frac{X_{\mathrm{II}}+X_{\mathrm{IN}}}{n}\xrightarrow{\mathbb P}1-\beta.
$$
\end{pro}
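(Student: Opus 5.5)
The plan is to pass the I-skeleton limits to the original process at the cutoff time $r_n^{\max}$, using Theorem~\ref{thm:skeleton-comparison}, and then to show that after $r_n^{\max}$ only $o_{\mathbb P}(n)$ further I--I or I--N edges can be accepted in $G$. The last assertion, for $X_{\mathrm{II}}+X_{\mathrm{IN}}$, is then just the sum of the first two limits.

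\emph{Skeleton limits at the cutoff.} Every accepted I--I edge of $\widehat G$ creates exactly one component, and components never merge in $\widehat G$. Hence $\widehat X_{\mathrm{II}}(r)=\widehat C_r$. By Lemma~\ref{lem:auxiliary-pre-stability}, $\widehat c_{r_n^{\max}}=c(s_{r_n^{\max}})+o_{\mathbb P}(1)$. Since $u(s)\uparrow\sqrt2$ and $c$ is continuous in $u$ (Proposition~\ref{pro:iskeleton-solutions}), we get $\widehat X_{\mathrm{II}}(r_n^{\max})/n\to c(\sqrt2)=\beta$ in probability. Each skeleton edge removes exactly two (I--I) or one (I--N) isolated vertex, so
\[
n-\widehat I_r=2\widehat X_{\mathrm{II}}(r)+\widehat X_{\mathrm{IN}}(r).
\]
Lemma~\ref{lem:auxiliary-pre-stability} also gives $\widehat I_{r_n^{\max}}=o_{\mathbb P}(n)$, which yields $\widehat X_{\mathrm{IN}}(r_n^{\max})/n\to1-2\beta$. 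The second display of Theorem~\ref{thm:skeleton-comparison} then transfers both limits to $X^G_{\mathrm{II}}(r_n^{\max})$ and $X^G_{\mathrm{IN}}(r_n^{\max})$, with error $o_{\mathbb P}(n/b_n^M)\subseteq o_{\mathbb P}(n)$.

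\emph{Control after $r_n^{\max}$.} In the original process a vertex, once non-isolated, stays non-isolated forever, because edges are never deleted. Every accepted I--I or I--N edge strictly decreases the number $I(G_r)$ of isolated vertices of $G$. Therefore
\[
X_{\mathrm{II}}+X_{\mathrm{IN}}-X^G_{\mathrm{II}}(r_n^{\max})-X^G_{\mathrm{IN}}(r_n^{\max})\le I(G_{r_n^{\max}}).
\]
Both $X$ counts are non-decreasing, so each terminal count differs from its value at $r_n^{\max}$ by at most $I(G_{r_n^{\max}})$. It remains to show $I(G_{r_n^{\max}})=o_{\mathbb P}(n)$. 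By Lemma~\ref{lem:coupling-invariant}(ii),(iii), an isolated vertex of $G_{r_n^{\max}}$ outside the marked region is an unmarked singleton component of $\widehat G_{r_n^{\max}}$, hence isolated in $\widehat G$. This gives
\[
I(G_{r_n^{\max}})\le \widehat I_{r_n^{\max}}+Z_{r_n^{\max}}.
\]
Both terms are $o_{\mathbb P}(n/b_n^M)$, by Lemma~\ref{lem:auxiliary-pre-stability} and Lemma~\ref{lem:marked-region-control} respectively.

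\emph{Main obstacle.} The delicate step is the bookkeeping of edge \emph{types}: in $G$ the type of an edge is evaluated in $G_{r-1}$, not in $\widehat G$. The monotonicity argument above avoids this issue for the tail phase, since it only uses that each accepted edge of type II or IN consumes an isolated vertex of $G$. For the phase up to $r_n^{\max}$, the type mismatch is exactly what Theorem~\ref{thm:skeleton-comparison} already controls through the marked region, so I would rely on it directly. The only further care needed is to check that the limits $\beta$ and $1-2\beta$ are attained at the growing time $s_{r_n^{\max}}=A_n\to\infty$. This holds because $c(s_r)\to c(\sqrt2)$ as $u(s_r)\to\sqrt2$.
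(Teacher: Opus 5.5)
Your proposal is correct and follows essentially the same route as the paper. You transfer the skeleton I--I and I--N counts to $G$ up to $r_n^{\max}$ via Theorem~\ref{thm:skeleton-comparison}, then use that every later accepted I--I or I--N edge consumes one of the $I(G_{r_n^{\max}})=o_{\mathbb P}(n)$ isolated vertices. The differences are minor: you read off the skeleton limits at the cutoff from Lemma~\ref{lem:auxiliary-pre-stability} and the identity $n-\widehat I_r=2\widehat X_{\mathrm{II}}(r)+\widehat X_{\mathrm{IN}}(r)$, and you bound $I(G_{r_n^{\max}})\le \widehat I_{r_n^{\max}}+Z_{r_n^{\max}}$ directly from the coupling invariant, whereas the paper cites the terminal skeleton limits of Section~\ref{sec:iskeleton-process} and Theorem~\ref{thm:skeleton-comparison}.
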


\begin{proof}
The limits in the auxiliary process are given in Section~\ref{sec:iskeleton-process}. By Lemma~\ref{lem:auxiliary-pre-stability}, $\widehat I_{r_n^{\max}}=o_{\mathbb P}(n/b_n^2)$, and Theorem~\ref{thm:skeleton-comparison} gives $I_{r_n^{\max}}(G)=o_{\mathbb P}(n/b_n^2)$. Up to time $r_n^{\max}$, accepted I-I and I-N counts in the two processes differ by $o_{\mathbb P}(n)$. After $r_n^{\max}$, every further accepted I-I or I-N edge consumes a vertex that is isolated immediately before that step, and accepted edges never create isolated vertices. After \(r_n^{\max}\), the total further contribution is \(o_{\mathbb P}(n)\), so the same limits hold for the original process.
\end{proof}

\section{From skeleton stars to terminal components}\label{sec:terminal-phase}

We now transfer the star decomposition of the auxiliary process to the terminal component
structure of the original process. The argument proceeds in two steps. First,
we stop the process once the number of isolated vertices becomes negligible
and discard a collection of exceptional components whose contribution to
every fixed polynomial moment is negligible. Second, conditional on the
history up to the stopping time, the unexposed leaf--leaf pairs within each
remaining skeleton star retain a uniformly random relative order. Moreover,
these relative orders are independent between distinct stars. Consequently,
the subsequent evolution on each leaf set is distributed as an independent
finite induced-\(P_4\)-free process.

\subsection{Stopping time and clean stars}

By Lemma~\ref{lem:auxiliary-pre-stability} and
Theorem~\ref{thm:skeleton-comparison}, for every fixed integer $m\ge1$,
$b_n^m{I_{r_n^{\max}}(G)}=o_{\mathbb P}(n)$.    
Applying Lemma~\ref{lem:deterministic-envelope} with
$W_n=I_{r_n^{\max}}(G)/n$, we obtain a deterministic sequence
$\varepsilon_n$  such that
\begin{equation}\label{eq:epsilon-envelope}
        \varepsilon_n\longrightarrow0,\qquad
        \frac{I_{r_n^{\max}}(G)}{\varepsilon_n n}
        \xrightarrow{\mathbb P}0,
        \quad
        \varepsilon_n b_n^m\longrightarrow0
        \quad\text{for every fixed }m\ge1.
\end{equation}
We introduce the stopping time $\tau_1=\inf\{r:I(G_r)\le\varepsilon_n n\}$. Then $\tau_1\le r_n^{\max}$ w.h.p.  Since accepted edges do not create isolated vertices, every accepted I--I or I--N edge after $\tau_1$
consumes at least one of the vertices isolated at time $\tau_1$. Consequently, there are at most $I_{\tau_1}(G)\le\varepsilon_n n$ accepted I--I or I--N edges after $\tau_1$.

By Theorem~\ref{thm:skeleton-comparison}, each unmarked star \(S\) in
\(\widehat G_{\tau_1}\) is the same labelled star in \(G_{\tau_1}\),
with the same vertex set, center, and leaf set.

The stars that are easy to analyze must satisfy the following three conditions: they lie outside the marked component, obey the component-size localization, and have no leaf--leaf pair exposed before $\tau_1$.

\begin{definition}\label{def:tau1-clean}
A non-trivial I-skeleton component
$S$ is called \emph{$\tau_1$-clean} if
\begin{enumerate}[label=(P\arabic*)]
\item $S\cap\widehat B_{\tau_1}=\varnothing$;
\item $S$ is a star with at most $b_n-1$ leaves;
\item no pair of leaves of $S$ has been offered by time $\tau_1$.
\end{enumerate}
\end{definition}

A star present at $\tau_1$ that fails one of (P1)--(P3) will be called, respectively, marked, size-exceptional, or pre-exposed. A clean star is called \emph{post-$\tau_1$-touched} if its represented component later receives an accepted I-N attachment, and a component is called \emph{post-$\tau_1$-born} if it is created by an I-I edge after $\tau_1$.

\subsection{Exceptional component and post-stability evolution}

We first record the persistence property that for an isolated vertex between successive I--N attachments.

\begin{lemma}\label{lem:persistent-centres}
Let \(C\) be a non-trivial component at time \(s\), and fix \(u\in U(C)\). Until the component containing \(C\) next receives an accepted I--N edge, \(u\) remains universal. Consequently, if \(v\) is isolated and $uv$ is unoffered at time $s$, then \(vu\) remains acceptable until the earlier of that I--N attachment and the time at which $v$ ceases to be isolated.
\end{lemma}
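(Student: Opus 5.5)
The plan is to show that, between I--N attachments to the component, every edge the process can add to that component comes from inside its own vertex set, and that such internal edges never destroy universality of a vertex that is already universal. Fix $C$ and $u\in U(C)$. Let $C_t$ be the component of $G_t$ containing $V(C)$ for $t\ge s$; by Proposition~\ref{pro:structural-invariants}(i) this is well defined. We argue by induction on $t$, up to the first time an accepted I--N edge has an endpoint in $C_t$. The inductive claim is that $V(C_t)=V(C)$ and that $u\in U(C_t)$.

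For the inductive step, let $e_{t+1}=xy$ be offered. We split into cases.

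\emph{Case 1: neither endpoint lies in $V(C_t)$.} Then nothing changes for $C_t$.

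\emph{Case 2: exactly one endpoint lies in $V(C_t)$.} If the other endpoint lies in a different non-trivial component, the edge is rejected by Proposition~\ref{pro:structural-invariants}(i). If the other endpoint is isolated, the edge is an I--N edge. Either it is rejected, in which case nothing changes, or it is accepted, which is exactly the stopping event. So before that event, $V(C_t)$ is unchanged.

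\emph{Case 3: both endpoints lie in $V(C_t)$.} Accepted edges only add adjacencies. Edges are never deleted, and $u$ is already adjacent to every other vertex of $C_t$, so $u$ stays adjacent to all of $V(C_{t+1})=V(C_t)$.

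Hence $u\in U(C_{t+1})$, which completes the induction. Note that acceptability is never monotone in general, but here we only need that adjacencies of $u$ persist and that the vertex set is frozen. Both of these follow deterministically from the fact that the process never removes edges.

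For the consequence, take $v$ isolated with $uv$ unoffered at time $s$. At any time $t$ before both the next I--N attachment to $C_t$ and the time $v$ stops being isolated, three facts hold: $v$ is isolated in $G_t$, $v\notin V(C_t)$ (since the vertex set is frozen and $v$ was isolated at time $s$, hence not in $C$), and $u\in U(C_t)$. Proposition~\ref{pro:structural-invariants}(iii) therefore gives that $vu$ is acceptable at time $t$.

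The only delicate point is Case 2. There we must check that an accepted edge from $C_t$ to a vertex outside it is necessarily an I--N attachment, so that the vertex set of the component cannot grow silently. This is exactly the combination of Proposition~\ref{pro:structural-invariants}(i) with the observation that any vertex outside $C_t$ is either isolated or in another non-trivial component.
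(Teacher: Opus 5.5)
Your proposal is correct and takes the same approach as the paper. Before the next accepted I--N edge, Proposition~\ref{pro:structural-invariants}(i) freezes the vertex set of the component, so the only accepted updates are internal edges, which cannot destroy the universality of $u$; Proposition~\ref{pro:structural-invariants}(iii) then gives acceptability of $vu$ while $v$ stays isolated, and your case analysis on the endpoints of the offered edge simply spells out what the paper's two-line proof leaves implicit.
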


\begin{proof}
Before the next accepted I--N attachment, the vertex set of the component containing \(C\) does not change, and its only accepted updates add internal edges. Thus \(u\) remains universal. While $v$ remains isolated,
Proposition~\ref{pro:structural-invariants}(iii) shows that $uv$ is acceptable.
\end{proof}

First, we estimate stars whose future leaf set has already been partially exposed.

\begin{lemma}
\label{lem:preexposed-stars}
Let $\mathcal P_n^{\mathrm{pre}}$ be the family of unmarked I-skeleton
components present at $\tau_1$ for which at least one pair of leaves has been
offered before $\tau_1$.  For every fixed $q\ge1$ and $M>0$,
\[
        \sum_{S\in\mathcal P_n^{\mathrm{pre}}}|S|^q
        =o_{\mathbb P}(n/b_n^M).
\]
\end{lemma}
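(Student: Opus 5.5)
Here $S$ is \emph{pre-exposed} if some pair of its leaves was offered before $\tau_1$. I would handle this with a first-moment (expected-increment) argument, run up to $\tau_1\wedge r_n^{\max}\wedge\sigma$, in the same style as Lemma~\ref{lem:marked-region-control}. The point is that a pair of leaves can only be offered after both leaves have joined the star, so a pre-exposure event is an N--N offer inside a current skeleton star. That is exactly the (B1) mechanism, but now restricted to leaf pairs.

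\textbf{Step 1: the exposure potential.} Let $P_r$ be the sum of $|S|^q$ over all unmarked non-trivial skeleton components $S$ present at time $r$ that already have an exposed leaf pair. Such a component gains weight in only two ways:
\begin{itemize}
\item A leaf--leaf pair of an unexposed star $C$ is offered. Before $\sigma$ this has conditional probability at most $\binom{|C|}{2}/M_r$, and it adds $|C|^q$ to $P_r$.
\item An already pre-exposed star receives an I--N attachment. This raises its weight from $x^q$ to $(x+1)^q$.
\end{itemize}

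\textbf{Step 2: bounding the two contributions.} The first mechanism has total conditional expectation at most
$$\frac{1}{M_r}\sum_{C}|C|^{q}\binom{|C|}{2}\le \gamma_q\frac{b_n^{q}}{n},$$
using $|C|\le b_n$, $M_r\ge n^2/3$ and $S(\widehat G_r)\le 50n$, exactly as in the (B1) estimate.

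For the second mechanism there are at most $\widehat I_r\le n$ isolated endpoints and at most two centres per star. The expected increment is therefore at most
$$\frac{2n}{M_r}\sum\gamma_q\bigl(|S|^{q-1}+1\bigr)\le \gamma_q\frac{P_r}{n}.$$

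Components that become marked only leave the family, and leaving is harmless because we want an upper bound. This gives
$$\mathbb E\bigl[P_{(r+1)\wedge\sigma}\mid\mathcal F_r\bigr]\le\Bigl(1+\frac{\gamma_q}{n}\Bigr)P_{r\wedge\sigma}+\gamma_q\frac{b_n^q}{n}.$$

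\textbf{Step 3: concluding.} Iterating over $r_n^{\max}=O(A_nn)$ steps with Lemma~\ref{lem:discrete-gronwall} gives
$$\mathbb E P_{r_n^{\max}\wedge\sigma}\le\gamma_q e^{\gamma_qA_n}A_nb_n^q=n^{o(1)}=o(n/b_n^M).$$

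Since $\tau_1\le r_n^{\max}$ w.h.p., and the stars present at $\tau_1$ with an exposed leaf pair are precisely those counted by $P_{\tau_1}$, Markov's inequality together with $\Pr(\sigma\le r_n^{\max})=o(1)$ from Lemma~\ref{lem:auxiliary-component-bounds} finishes the argument. Here I use Theorem~\ref{thm:skeleton-comparison} to identify unmarked skeleton stars with original components. One subtlety is that $P_r$ is not monotone, because stars can become marked. To handle this I would instead bound the monotone quantity that also keeps weights of stars which later become marked: each star is counted with its current size if unmarked, and with its size at the time of marking otherwise. This quantity dominates the desired sum at $\tau_1$ and satisfies the same recursion.

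\textbf{Main obstacle.} The hardest step is the second mechanism: justifying that growth of already-exposed stars is only a $(1+O(1/n))$ multiplicative effect, despite the polynomial weight $|S|^q$. I would handle it with the inequality $(x+1)^q-x^q\le\gamma_q(x^{q-1}+1)$, together with the fact that the attachment rate to a given star is $O(1/n)$ per step, which holds because there are at most $2$ centres per star and at most $n$ isolated vertices.
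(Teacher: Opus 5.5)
\textbf{There is a genuine gap.} Your reduction rests on the claim that a pair of leaves can only be offered after both leaves have joined the star. That claim is false.

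The pair $xy$ can be offered while $x$ is still isolated and $y$ is already a non-central leaf of a star $S$ with at least two leaves. That offer is rejected, since it would create the induced path $x - y - c(S) - y'$. But $x$ may later attach to the centre $c(S)$, and then $x,y$ are two leaves of $S$ whose pair has already been revealed. For an \emph{unmarked} star this is in fact the only route to pre-exposure.

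The event you analyse, a leaf--leaf offer inside a current unmarked star, is exactly rule (B1), and (B1) marks the star at once. So your mechanism never produces a member of $\mathcal P_n^{\mathrm{pre}}$. Your potential $P_r$, driven only by your two mechanisms, is identically zero, and the recursion bounds nothing relevant. You cannot fall back on a weaker reading of the lemma either: Lemma~\ref{lem:clean-star-completion} needs every pair in $\binom{L(S)}{2}$ to be unrevealed at $\tau_1$, no matter when it was offered.

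The missing step is to control these rejected side offers, and it cannot be done crudely. An isolated vertex hits a proper leaf with conditional probability of order one throughout a linear stretch of time, so there are $\Theta(n)$ such offers. The estimate must therefore use the fact that $x$ has to choose the centre of that \emph{particular} star afterwards.

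The paper does this with a race. Let $\mathcal A_r$ be the current unmarked stars with at least two leaves, and write $u(S')$ for the centre of $S'$. Then $x$ ends up in $S$ with $S$ still unmarked only if $x\,u(S)$ is the first of the unrevealed pairs $x\,u(S')$, $S'\in\mathcal A_r$, to be offered. By Lemma~\ref{lem:suffix-exchangeability} this has conditional probability at most $1/|\mathcal A_r|$. This gives at most $3b_n/n$ expected successful side offers per step, hence $O(A_nb_n)$ of them before $r_n^{\max}\wedge\sigma$. Each contributes at most $b_n^q$, and Markov's inequality finishes.

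Within your potential framework, you could instead add a source term for the next offered pair being $x\,c(S)$ for one of the pending pairs $(x,S)$ created by earlier rejected side offers. There are at most $r$ such pairs, so this event has probability at most $r/M_r=O(A_n/n)$, which still yields an $n^{o(1)}$ bound. Some estimate of this kind is indispensable, and your proposal does not have one.
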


\begin{proof}
We work on the event \(\{\tau_1\le r_n^{\max},\,\sigma>r_n^{\max}\}\), whose probability tends to \(1\).

Fix \(S_{\tau_1}\in\mathcal P_n^{\mathrm{pre}}\), and consider the first
edge offered before \(\tau_1\) whose endpoints later become leaves of
\(S_{\tau_1}\). While the relevant component remains unmarked,
Lemma~\ref{lem:coupling-invariant},
Proposition~\ref{pro:structural-invariants}, and rule (B1) leave only
the following possibility: at some step \(r+1\), an edge \(xy\) is
offered from an isolated vertex \(x\) to a proper leaf \(y\) of an
unmarked star \(S\), the edge \(xy\) is rejected, and \(x\) later
attaches to the center of the component containing \(S\). The \(K_2\)
case causes no difficulty, since a \(K_2\) has no proper leaf before a
later attachment determines a unique center.

Fix such an offer \(xy\) at step \(r+1\). Let \(\mathcal A_r\) be the
family of current unmarked star components with at least two leaves.
For each \(S'\in\mathcal A_r\), let \(u(S')\) denote its unique center.
Since \(x\) is isolated at time \(r\), the pair \(xu(S')\) is still
unoffered. Moreover, while \(S'\) remains unmarked,
Lemma~\ref{lem:persistent-centres} ensures that \(u(S')\) remains
universal, so \(xu(S')\) is an available I--N attachment whenever
\(x\) is still isolated.

Let \(P_{xS}\) be the event that \(x\) later attaches to the component
containing \(S\) and that the resulting component remains unmarked.
On \(P_{xS}\), the pair \(xu(S)\) must be the first pair in \(\{xu(S'):S'\in\mathcal A_r\}\)
to appear in the remaining edge order. Indeed, if \(xu(S')\) with
\(S'\ne S\) appears first while \(S'\) is unmarked, then \(x\) attaches
to \(S'\). If \(S'\) has already become marked, rule (B2) marks the
component containing \(x\), which again prevents the later component
containing \(S\) from remaining unmarked. Hence, by
Lemma~\ref{lem:suffix-exchangeability},
\[
        \mathbb P(P_{xS}\mid\mathcal F_r)
        \le \frac{1}{|\mathcal A_r|}.
\]

We now count such successful side offers. Conditional on
\(\mathcal F_r\), the next offered pair is uniform among the \(M_r\)
unrevealed pairs. For each \(S\in\mathcal A_r\), there are at most
\(\widehat I_r(|S|-1)\) choices of a pair \(xy\), where \(x\) is
isolated and \(y\) is a proper leaf of \(S\). Therefore the conditional
expected number of successful side offers generated at step \(r+1\) is
at most
\[
        \frac1{M_r}
        \sum_{S\in\mathcal A_r}
        \widehat I_r(|S|-1)\frac1{|\mathcal A_r|}
        \le \frac{3b_n}{n},
\]
for \(r<r_n^{\max}\wedge\sigma\), since
\(M_r\ge n^2/3\), \(\widehat I_r\le n\), and
\(|S|-1\le b_n\).

Let \(H_n^{\mathrm{st}}\) be the total number of successful side offers
before \(r_n^{\max}\wedge\sigma\). Since
\(r_n^{\max}=O(A_n n)\), \(\mathbb E H_n^{\mathrm{st}}=O(A_n b_n)\).
Every component in \(\mathcal P_n^{\mathrm{pre}}\) is generated by at
least one such successful side offer, and on the event under
consideration every such component has at most \(b_n\) vertices.
Consequently,
\[
        \sum_{S\in\mathcal P_n^{\mathrm{pre}}}|S|^q
        \le b_n^q H_n^{\mathrm{st}}.
\]
Thus
\[
        \mathbb E\!\left[
        \sum_{S\in\mathcal P_n^{\mathrm{pre}}}|S|^q;
        \ \tau_1\le r_n^{\max},\ \sigma>r_n^{\max}
        \right]
        =O(A_n b_n^{q+1})
        =o(n/b_n^M).
\]
Markov's inequality, together with
\[
        \mathbb P\bigl(
        \tau_1>r_n^{\max}\ \text{or}\ \sigma\le r_n^{\max}
        \bigr)=o(1),
\]
proves the claim.
\end{proof}

We next construct a linear-sized family of persistent clean universal vertices. For convenience, let $\mathsf{Reg}_n$ be the event that all of the following conditions hold: $
        \tau_1\le r_n^{\max},
        I_{\tau_1}(G)\le\varepsilon_n n
        \widehat R_{\tau_1}\le b_n,
        S(\widehat G_{\tau_1})\le50n, $ 
and at least $\rho_1n/2$ unmarked $K_2$ are present at time $\tau_1$. As discussed before, we have $\mathbb P(\mathsf{Reg}_n)\to1$.

On \(\mathsf{Reg}_n\), choose \(\lfloor\rho_1n/2\rfloor\) unmarked copies of \(K_2\) present at time \(\tau_1\) by a fixed deterministic rule.  Let \(\mathcal K_0^{\mathrm{live}}\) be this family, and for every \(K\in\mathcal K_0^{\mathrm{live}}\), choose one endpoint \(u(K)\) by a fixed deterministic rule.  We call \(K\) live until the first accepted I--N edge after \(\tau_1\) whose non-isolated endpoint lies in the component \(K\); at that time \(K\) is retired.  Since at most \(\varepsilon_n n\) such retirements can occur, for all sufficiently large \(n\) at least \(\rho_1n/3\) selected copies remain live throughout the post-\(\tau_1\) evolution.

Let \(T_0=\tau_1\), and, for \(j\ge0\), let \(T_{j+1}\) be the
next time after \(T_j\) at which an I--I or I--N edge is accepted.
If no such time exists, set \(T_{j+1}=\infty\) and \(T_k=\infty\)
for all \(k>j+1\). Events asserting a type of accepted edge at $T_{j+1}$ are then interpreted as empty. An isolated vertex at time $T_j$ has remained isolated throughout the interval $[\tau_1,T_j]$. For every live selected copy \(K\), the chosen endpoint \(u(K)\) has been universal since \(K\) was born. Hence an edge from such an isolated vertex to \(u(K)\) is unoffered at time \(T_j\), and Lemma~\ref{lem:persistent-centres} keeps it acceptable while \(K\) remains live.

\begin{lemma}\label{lem:posttau-race-estimates}
On $\mathsf{Reg}_n$, conditional on $\mathcal F_{T_j}$, the following hold for every $j$ with $T_j<\infty$. If $D$ is a current non-trivial component, then
\begin{equation}\label{eq:race-attach}
\mathbb P\bigl(T_{j+1}\text{ attaches an isolated vertex to }D
       \mid\mathcal F_{T_j}\bigr)
\le \frac{4|D|}{\rho_1n}.
\end{equation}
Moreover,
\begin{equation}\label{eq:race-ii}
\mathbb P\bigl(T_{j+1}\text{ is an I--I edge}
       \mid\mathcal F_{T_j}\bigr)
\le \frac{3\varepsilon_n}{2\rho_1}.
\end{equation}
\end{lemma}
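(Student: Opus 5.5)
The plan is a race argument based on Lemma~\ref{lem:suffix-exchangeability}. Condition on $\mathcal F_{T_j}$ with $T_j<\infty$, and let $I:=I(G_{T_j})\le\varepsilon_n n$ be the current set of isolated vertices. By the remark preceding the lemma, every isolated vertex at $T_j$ has been isolated throughout $[\tau_1,T_j]$. Every live selected copy $K$ has a centre $u(K)$ that is universal. Hence the set
\[
A_{\mathrm{live}}:=\{vu(K): v\in I,\ K\ \text{live}\}
\]
consists of unoffered pairs, all acceptable at time $T_j$. It has size at least $|I|\rho_1 n/3$, since at least $\rho_1n/3$ selected copies stay live.

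The key observation is that between $T_j$ and $T_{j+1}$ no I--I or I--N edge is accepted. So the isolated set and all component vertex sets are frozen, and by Lemma~\ref{lem:persistent-centres} each pair of $A_{\mathrm{live}}$ stays acceptable throughout the interval. Therefore the first pair of $A_{\mathrm{live}}$ offered after $T_j$ is accepted, unless $T_{j+1}$ has already occurred. Consequently, the event that $T_{j+1}$ is of a given kind is contained in the event that some pair of a suitable set $B$ appears in the suffix before every pair of $A_{\mathrm{live}}$.

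For \eqref{eq:race-attach}, take $B$ to be the set of unoffered pairs joining $I$ to $V(D)$. Then $|B|\le |I|\,|D|$. We may exclude from $A_{\mathrm{live}}$ those pairs already in $B$; this happens when $D$ is itself a live copy, and it removes at most $2|I|$ pairs, which is harmless for large $n$. Lemma~\ref{lem:suffix-exchangeability} then gives the bound
\[
\frac{|B|}{|A_{\mathrm{live}}\setminus B|}\le \frac{|I|\,|D|}{|I|(\rho_1 n/3-2)}\le\frac{4|D|}{\rho_1 n}.
\]
For \eqref{eq:race-ii}, take $B$ to be the set of unoffered I--I pairs inside $I$. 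Then $|B|\le |I|^2/2$, and the race bound gives
\[
\frac{|I|^2/2}{|I|\rho_1n/3}\le\frac{3\varepsilon_n}{2\rho_1},
\]
using $|I|\le\varepsilon_n n$.

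The delicate step is justifying the application of Lemma~\ref{lem:suffix-exchangeability} at the stopping time $T_j$ with $\mathcal F_{T_j}$-measurable sets $A_{\mathrm{live}}\setminus B$ and $B$. Both sets are determined by the state at $T_j$ and by the deterministic selection rule on $\mathsf{Reg}_n$, and $\mathsf{Reg}_n$ is determined by $\mathcal F_{\tau_1}\subseteq\mathcal F_{T_j}$. So one intersects with $\mathsf{Reg}_n$ and applies the lemma. The other point requiring care is checking that the live pairs cannot be rejected, or cease to be acceptable, before $T_{j+1}$. This is exactly where the frozen structure and Lemma~\ref{lem:persistent-centres} are used: N--N acceptances only add internal edges and preserve universality. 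A live $K$ retires only at an accepted I--N edge, which would itself be $T_{j+1}$.
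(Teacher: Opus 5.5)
Your proposal is correct and is essentially the paper's proof: both race the target pairs against the unoffered, persistently acceptable pairs $vu(K)$, joining current isolated vertices to centres of live selected copies of $K_2$, and both apply Lemma~\ref{lem:suffix-exchangeability} at the stopping time $T_j$. The differences are cosmetic: you delete $A_{\mathrm{live}}\cap B$, which has at most $|I|$ pairs because a live copy is itself a whole component (so your $2|I|$ is a harmless overcount), whereas the paper discards the at most one live copy meeting $D$; the paper also records explicitly the trivial case $I=\varnothing$, where both probabilities vanish.
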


\begin{proof}
Fix $j$ and condition on $\mathcal F_{T_j}$.  Let $I_j$ be the set of vertices
isolated at time $T_j$, and let \(\mathcal K_j^{\mathrm{live}}\) be the family of selected copies of
\(K_2\) that are still live.  For a target component \(D\), put
\[
\mathcal K_j(D) = \{K\in\mathcal K_j^{\mathrm{live}}: V(K)\cap V(D)=\varnothing\}.
\]
By Proposition~\ref{pro:structural-invariants}(i), a current non-trivial component \(D\) can intersect at most one of the selected copies of \(K_2\). Hence, for all sufficiently large \(n\), \( |\mathcal K_j(D)|\ge |\mathcal K_j^{\mathrm{live}}|-1\ge \rho_1n/4\).
Let $R_j$ be the set of
unrevealed pairs at time $T_j$, and define
\[
        A_D=\{vx:v\in I_j,\ x\in V(D)\}\cap R_j,
        \qquad
        B_D=\{v u(K):v\in I_j,\ K\in\mathcal K_j(D)\}.
\]
The sets $A_D$ and $B_D$ are disjoint.  Every edge in $B_D$ is unoffered at time $T_j$ and would be
acceptable before the next accepted I--I or I--N edge. Thus, the event in
\eqref{eq:race-attach} is contained in the event that the first pair of
$A_D\cup B_D$ in the remaining suffix lies in $A_D$.  If $I_j=\varnothing$,
the probability is zero; otherwise Lemma~\ref{lem:suffix-exchangeability}
gives
\[
        \frac{|A_D|}{|A_D|+|B_D|}
        \le\frac{4|I_j||D|}{|I_j|\rho_1n}
        =\frac{4|D|}{\rho_1 n}.
\]

For \eqref{eq:race-ii}, use instead
\[
        A_{II}=\binom{I_j}{2}\cap R_j,
        \qquad
        B_{II}=\{v u(K):v\in I_j,\ K\in\mathcal K_j^{\mathrm{live}}\}.
\]
The same argument gives, when $I_j\ne\varnothing$,
\[
        \frac{|A_{II}|}{|A_{II}|+|B_{II}|}
        \le\frac{3\binom{|I_j|}{2}}{|I_j|\rho_1n}
        \le\frac{3|I_j|}{2\rho_1n}
        \le\frac{3\varepsilon_n}{2\rho_1 }.
\]
\end{proof}

A useful consequence of \eqref{eq:race-attach} is that a non-trivial component of current size $x$ has conditional expected $q$-moment increase at the next relevant step at most
\begin{equation}\label{eq:one-step-growth}
        \frac{4x}{\rho_1n}\bigl((x+1)^q-x^q\bigr)
        \le\frac{\gamma_qx^q}{n}.
\end{equation}

We now control the components created or first touched after $\tau_1$.

\begin{lemma}
\label{lem:posttau-activated-descendants}
Let \(\mathcal D_n^{\mathrm{act}}\) be the family of terminal components \(D\) satisfying
at least one of the following conditions:
\begin{enumerate}[label=(\roman*)]
\item \(D\) contains an unmarked non-trivial component present at time \(\tau_1\) whose component receives an accepted I--N edge after \(\tau_1\);
\item \(D\) contains both endpoints of an accepted I--I edge offered after
\(\tau_1\).
\end{enumerate}
Then for every fixed $q\ge1$ and $M>0$,
\[
        \sum_{D\in\mathcal D_n^{\mathrm{act}}}|D|^q
        =o_{\mathbb P}(n/b_n^M).
\]
\end{lemma}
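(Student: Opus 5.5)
We work on $\mathsf{Reg}_n$ throughout and reduce the statement to an expected-value bound over the post-$\tau_1$ epochs $T_0<T_1<\cdots$. Since at most $I_{\tau_1}(G)\le\varepsilon_n n$ accepted I--I or I--N edges occur after $\tau_1$, there are at most $\varepsilon_n n$ finite epochs. Only at these epochs can a component's vertex set change. Between epochs, the only accepted edges are N--N edges inside components, by Proposition~\ref{pro:structural-invariants}(i), and these do not change vertex sets.

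Every $D\in\mathcal D_n^{\mathrm{act}}$ has a \emph{founding event} at some epoch $T_{j+1}$. For case (ii), this is the accepted I--I edge, which creates a component of size $2$. For case (i), it is the first accepted I--N attachment after $\tau_1$ to an unmarked component $C$ present at $\tau_1$; immediately after it, the component has size $|C|+1\le b_n+1$ on $\mathsf{Reg}_n$. After foundation, $D$ evolves only by further I--N attachments at later epochs, since I--I edges create new components and never merge two existing ones. So it suffices to bound the expected terminal $q$-th moment of each founded component and then sum over founding events.

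\textbf{Step 1: growth control.} For a component of current size $x$, \eqref{eq:one-step-growth} shows that its $q$-th moment grows in conditional expectation by a factor at most $1+\gamma_q/n$ per epoch. Over at most $\varepsilon_n n$ epochs, Lemma~\ref{lem:discrete-gronwall} gives the bound $\mathbb E[|D|^q\mid\text{foundation}]\le e^{\gamma_q\varepsilon_n}x_0^q\le 2x_0^q$, where $x_0$ is the size at foundation. To make this rigorous, I would apply the recursion to $|D_{T_j}|^q$ stopped at the exit time of $\mathsf{Reg}_n$-type conditions. All the bounds in Lemma~\ref{lem:posttau-race-estimates} are conditional on $\mathcal F_{T_j}$ and hold uniformly, so the optional-stopping iteration goes through.

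\textbf{Step 2: counting founding events.} By \eqref{eq:race-ii}, each epoch is an I--I founding with probability at most $3\varepsilon_n/(2\rho_1)$. The expected number of type (ii) foundations is therefore $O(\varepsilon_n^2 n)$, each contributing $O(1)$ to the moment. For type (i), \eqref{eq:race-attach} gives that at each epoch the probability of first-touching a given pre-$\tau_1$ component $C$ is at most $4|C|/(\rho_1 n)$. Summing over epochs and over $C$, the expected type (i) contribution is at most
\[
\sum_{C}\varepsilon_n n\cdot\frac{4|C|}{\rho_1 n}\cdot 2(|C|+1)^q\le \gamma_q\varepsilon_n b_n^{q-1}\,S(\widehat G_{\tau_1})\le \gamma_q'\varepsilon_n b_n^{q-1}n,
\]
using $|C|\le b_n$ and $S\le 50n$. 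Several founding events can land in the same terminal component, for example when an I--I-born $K_2$ later absorbs an attachment. In that case I charge $|D|^q$ to its earliest founding event, which is safe because the growth bound in Step 1 already covers all later attachments.

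\textbf{Step 3: conclusion.} Adding the two contributions gives an expected bound of $O(\varepsilon_n^2 n+\varepsilon_n b_n^{q-1}n)$. By \eqref{eq:epsilon-envelope}, $\varepsilon_n b_n^{m}\to0$ for every $m$, so taking $m=q-1+M$ gives $\varepsilon_n b_n^{q-1}n=o(n/b_n^M)$; similarly $\varepsilon_n^2 n=o(n/b_n^M)$. Markov's inequality together with $\mathbb P(\mathsf{Reg}_n)\to1$ finishes the proof.

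The main obstacle is Step 1. A component's growth steps are correlated with the epoch structure, and the race bound \eqref{eq:race-attach} relies on the live selected $K_2$'s remaining available. Handling this needs a careful supermartingale formulation over the random epoch times, including the epochs that become $\infty$.
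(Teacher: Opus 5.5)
Your proposal is correct and follows essentially the paper's route. It uses the same race bounds \eqref{eq:race-attach} and \eqref{eq:race-ii}, the one-step growth estimate \eqref{eq:one-step-growth}, discrete Gronwall over at most $\varepsilon_n n$ epochs, and the envelope property $\varepsilon_n b_n^m\to0$, and it reaches the same bound $O(\varepsilon_n n b_n^{q-1}+\varepsilon_n^2 n)$. The only difference is bookkeeping: the paper tracks the single aggregate $\overline Z_q(j)=\mathbf 1_{\mathsf{Reg}_n}\sum|D|^q$ over all activated components, with first-touch and I--I-birth source terms, which gives one recursion in $(\mathcal F_{T_j})$ and avoids the per-component conditioning on random founding times that you flag as the main obstacle in Step 1.
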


\begin{proof}
We work on $\mathsf{Reg}_n$. View the isolated vertices at $\tau_1$ as singleton components. Let $Z_q(j)$ be the sum of the $q$-th powers of the sizes of all activated components at time $T_j$, and put
\(
        \overline Z_q(j)=\mathbf1_{\mathsf{Reg}_n}Z_q(j)
\).
For an unmarked non-trivial component $S$, if $S$ has not yet been touched by an isolated vertex, then by \eqref{eq:race-attach} its first touch at $T_{j+1}$ has conditional probability at most $4|S|/(\rho_1 n)$ and contributes at most $(|S|+1)^q$. Hence, on $\mathsf{Reg}_n$, the total conditional first-touch contribution is at most
\[
        \frac4{\rho_1n}\sum_{\substack{S\in
        \widehat{\mathcal P}_{\tau_1}\\ |S|\ge2,S\cap \widehat B_{\tau_1}=\emptyset }}
        |S|(|S|+1)^q
        \le \gamma_qb_n^{q-1},
\]
where we used $|S|\le b_n$ and $\sum_S |S|^2\le50n$. By \eqref{eq:race-ii}, the conditional probability of a post-$\tau_1$ I--I birth is at most $\Theta(\varepsilon_n)$, and such a birth contributes at most $2^q$.  Finally, by \eqref{eq:one-step-growth}, summed over all $D\in \mathcal D_n^{\mathrm{act}}$, bounds their subsequent growth by $\gamma_qZ_q(j)/n$.
Therefore,
\[
 \mathbb E[\Delta\overline Z_q(j)\mid\mathcal F_{T_j}]
 \le
 \bigl(\gamma_qb_n^{q-1}+\gamma_q\varepsilon_n\bigr)
       \mathbf1_{\mathsf{Reg}_n}
 +\frac{\gamma_q}{n}\overline Z_q(j).
\]
There are at most $\varepsilon_n n$ relevant accepted I--I or I--N steps after
$\tau_1$.  Lemma~\ref{lem:discrete-gronwall} gives
\[
 \mathbb E\bigl[\overline Z_q(\mathrm{terminal})
       \mid\mathcal F_{\tau_1}\bigr]
 \le
 \gamma_q\bigl(\varepsilon_n n b_n^{q-1}+\varepsilon_n^2n\bigr)
 e^{\gamma_q\varepsilon_n}\mathbf1_{\mathsf{Reg}_n}.
\]
By \eqref{eq:epsilon-envelope}, the right-hand side is
$o(n/b_n^M)$.  Markov's inequality and $\mathbb P (\mathsf{Reg}_n)\to 1$ complete the proof.
\end{proof}

The marked components are already exceptional at time $\tau_1$.

\begin{lemma}
\label{lem:terminal-marked-descendants}
Let $\mathcal M_n$ be the family of terminal components that contain a component of
$G_{\tau_1}$ contained in $\widehat B_{\tau_1}$ and are not already counted in
$\mathcal D_n^{\mathrm{act}}$.  For every fixed $q\ge1$ and $M>0$,
\[
        \sum_{D\in\mathcal M_n}|D|^q
        =o_{\mathbb P}(n/b_n^M).
\]
\end{lemma}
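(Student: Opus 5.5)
The plan is to split each terminal component \(D\in\mathcal M_n\) into its marked part at time \(\tau_1\) and whatever it absorbs afterwards. We then show that, since \(D\notin\mathcal D_n^{\mathrm{act}}\), the absorbed part can only consist of isolated vertices attached by I--N edges. First we identify the time-\(\tau_1\) structure. On \(\mathsf{Reg}_n\) we have \(\tau_1\le r_n^{\max}\), and \(\widehat B_{\tau_1}\) is a union of components of \(G_{\tau_1}\) by Lemma~\ref{lem:coupling-invariant}. Lemma~\ref{lem:marked-region-control} (equivalently the last assertion of Theorem~\ref{thm:skeleton-comparison}) then gives, for every fixed \(q\) and \(M\),
\[
\sum_{\substack{D'\in\mathcal C(G_{\tau_1}):\,V(D')\subseteq\widehat B_{\tau_1}}}|D'|^q=o_{\mathbb P}(n/b_n^M).
\]
By Proposition~\ref{pro:structural-invariants}(i), no accepted edge after \(\tau_1\) merges two distinct non-trivial components. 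Hence every \(D\in\mathcal M_n\) contains exactly one non-trivial marked component \(D'\) of \(G_{\tau_1}\), unless it arises from a marked isolated vertex. In that case \(D\) is born by a post-\(\tau_1\) I--I edge, and \(D\in\mathcal D_n^{\mathrm{act}}\) by (ii). Moreover, \(D\setminus D'\) consists of vertices isolated at \(\tau_1\), and these are added one at a time by I--N attachments to the component containing \(D'\).

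Next we run the same growth recursion as in Lemma~\ref{lem:posttau-activated-descendants}, indexed by the relevant steps \(T_j\). Let \(W_q(j)\) be the sum of \(|D_j|^q\) over the current components \(D_j\) descending from marked non-trivial components of \(G_{\tau_1}\), and put \(\overline W_q(j)=\mathbf 1_{\mathsf{Reg}_n}W_q(j)\). By \eqref{eq:race-attach}, a component of current size \(x\) receives the attachment at \(T_{j+1}\) with conditional probability at most \(4x/(\rho_1n)\). So by \eqref{eq:one-step-growth},
\[
\mathbb E[\Delta\overline W_q(j)\mid\mathcal F_{T_j}]\le\frac{\gamma_q}{n}\overline W_q(j).
\]
There are at most \(\varepsilon_nn\) relevant steps. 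Lemma~\ref{lem:discrete-gronwall} then gives
\[
\mathbb E[\overline W_q(\mathrm{terminal})\mid\mathcal F_{\tau_1}]\le e^{\gamma_q\varepsilon_n}\,\overline W_q(0),
\]
where \(e^{\gamma_q\varepsilon_n}=O(1)\) and \(\overline W_q(0)\) is bounded by the displayed sum above.

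To conclude, fix \(M\). On the event that \(\overline W_q(0)\le n/b_n^{M+1}\), whose probability tends to \(1\), conditional Markov gives \(\overline W_q(\mathrm{terminal})\le n/b_n^{M}\) with conditional probability at least \(1-O(b_n^{-1})\). Since \(\mathbb P(\mathsf{Reg}_n)\to1\), this yields \(\sum_{D\in\mathcal M_n}|D|^q\le W_q(\mathrm{terminal})=o_{\mathbb P}(n/b_n^M)\). Here we use that distinct members of \(\mathcal M_n\) come from distinct marked non-trivial components, so each \(|D|^q\) is counted at most once.

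I expect the main obstacle to be justifying that \eqref{eq:race-attach} applies to marked components. That estimate compares attachments to \(D\) with the race against the live selected \(K_2\)'s, and it needs only that \(D\) is a current non-trivial component disjoint from at least \(\rho_1n/4\) live copies. This holds regardless of marking, by Proposition~\ref{pro:structural-invariants}(i). The bookkeeping of which descendants have already been counted in \(\mathcal D_n^{\mathrm{act}}\) is harmless, since we only need an upper bound and may include all marked descendants.
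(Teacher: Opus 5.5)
Your proposal is correct and follows the paper's own argument. You track the $q$-th moment of the descendants of the marked region along the steps $T_j$, and bound its drift by $\gamma_q\overline W_q(j)/n$ via \eqref{eq:race-attach} and \eqref{eq:one-step-growth}, which, as you rightly note, apply to marked components. You then iterate with Lemma~\ref{lem:discrete-gronwall} over at most $\varepsilon_n n$ steps and finish with $\overline W_q(0)\le R_{q,\tau_1}$ and conditional Markov. Keeping all marked descendants instead of removing the activated ones is a harmless simplification.

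One small slip: a vertex of $\widehat B_{\tau_1}$ that is isolated in $G_{\tau_1}$ and stays isolated to the end belongs to $\mathcal M_n$. It is covered neither by your case split (which sends marked isolated vertices only to post-$\tau_1$ I--I births) nor by your $W_q$, whereas the paper's $W_q$ includes such singletons. There are at most $I_{\tau_1}(G)\le\varepsilon_n n=o(n/b_n^M)$ of them, each contributing $1$, so the conclusion is unaffected.
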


\begin{proof}
On $\mathsf{Reg}_n$, let $W_q(j)$ be the sum of the $q$-th powers of the sizes
of the terminal components that contain a component of
$G_{\tau_1}$ still not counted by the activated process at time
$T_j$, and set
\(
        \overline W_q(j)=\mathbf1_{\mathsf{Reg}_n}W_q(j).
\)
The only way a terminal component that contains a component of
$G_{\tau_1}$ can leave \(W_q\) is through an I--I birth involving a marked isolated vertex or through the first touch of an unmarked non-trivial component; these are precisely the two source types covered by Lemma~\ref{lem:posttau-activated-descendants}. Otherwise it remains in \(W_q\), and its subsequent I--N growth is controlled by \eqref{eq:one-step-growth}.
Thus $W_q$ has no positive birth or first-touch source term.  By
\eqref{eq:one-step-growth},
\[
        \mathbb E[\Delta\overline W_q(j)\mid\mathcal F_{T_j}]
        \le\frac{\gamma_q}{n}\overline W_q(j).
\]
Since there are at most $\varepsilon_n n$ relevant steps,
Lemma~\ref{lem:discrete-gronwall} yields
\[
        \mathbb E\bigl[\overline W_q(\mathrm{term})
              \mid\mathcal F_{\tau_1}\bigr]
        \le e^{\gamma_q\varepsilon_n}\overline W_q(0),
        \qquad
        \overline W_q(0)
        \le\mathbf1_{\mathsf{Reg}_n}R_{q,\tau_1}.
\]
By Lemma~\ref{lem:marked-region-control},
$R_{q,\tau_1}=o_{\mathbb P}(n/b_n^M)$.  Markov's inequality conditionally on
$\mathcal F_{\tau_1}$, followed by $\mathbb P (\mathsf{Reg}_n)\to 1$, proves the claim.
\end{proof}

Recall from Theorem~\ref{thm:intro-terminal-decomposition} that \(\mathcal E_n\) denotes the family of terminal components not accounted for by the component representation. We now make this definition precise in the preceding estimates, which can now be assembled in a single argument.

\begin{lemma}\label{lem:exceptional-components}
Let \(\mathcal E_n\) be the set of distinct terminal components belonging to at
least one of the following families:
\begin{enumerate}[label=(\alph*),leftmargin=*]
\item terminal component contains a time-$\tau_1$ component meeting $\widehat B_{\tau_1}$;
\item terminal component contains a size-exceptional star present at $\tau_1$;
\item terminal component contains a pre-exposed star present at $\tau_1$;
\item terminal component contains a post-$\tau_1$-touched clean star;
\item terminal component contains a post-$\tau_1$-born component;
\item terminal isolated vertices.
\end{enumerate}
Every terminal component that is not the terminal descendant of an
untouched \(\tau_1\)-clean star belongs to \(\mathcal E_n\).  Moreover, for every fixed
$q\ge1$ and $M>0$,
\[
        \sum_{D\in\mathcal E_n}|D|^q
        =o_{\mathbb P}(n/b_n^M).
\]
In particular, the total number of vertices in a component in $\mathcal E_n$ is
$o_{\mathbb P}(n)$.
\end{lemma}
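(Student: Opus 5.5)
The plan has two parts: first the covering claim, then the moment bound for each of (a)--(f), assembled by a union bound.

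For the covering claim, take a terminal component $D$ and classify its history at time $\tau_1$. Since accepted edges never split components, $D$ is the union of time-$\tau_1$ components, together possibly with vertices isolated at $\tau_1$. There are three possibilities.
\begin{itemize}
\item If $D$ contains no non-trivial time-$\tau_1$ component, then it is either a terminal isolated vertex, which is case (f), or it was created by a post-$\tau_1$ I--I edge, which is case (e).
\item If $D$ contains a non-trivial time-$\tau_1$ component that is marked, size-exceptional or pre-exposed, then $D$ falls into (a), (b) or (c). Marked components of $G_{\tau_1}$ correspond to $\widehat B_{\tau_1}$ by Lemma~\ref{lem:coupling-invariant}.
\item Otherwise every non-trivial time-$\tau_1$ component inside $D$ is a $\tau_1$-clean star. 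By Proposition~\ref{pro:structural-invariants}(i), $D$ contains at most one such star. If $D$ also absorbed an isolated vertex after $\tau_1$, the star is post-$\tau_1$-touched, which is case (d). If not, $D$ is exactly the terminal descendant of an untouched clean star.
\end{itemize}
This proves the first assertion.

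For the moment bound, it suffices to prove the bound for each family (a)--(f) separately, because the sum over the union is at most the sum of the six sums. I will treat the families as follows.
\begin{itemize}
\item Families (d) and (e) are contained in $\mathcal D_n^{\mathrm{act}}$, so Lemma~\ref{lem:posttau-activated-descendants} applies.
\item For family (a), split the components into those in $\mathcal D_n^{\mathrm{act}}$ and the rest. The rest form exactly $\mathcal M_n$, which Lemma~\ref{lem:terminal-marked-descendants} controls.
\item Family (b) is empty on $\mathsf{Reg}_n$, since $\widehat R_{\tau_1}\le b_n$ there. A star with $b_n$ leaves would have $b_n+1>b_n$ vertices, and because (P2) allows at most $b_n-1$ leaves, a component of size exactly $b_n$ also qualifies. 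So I will define the threshold consistently with the stopping-time bound $\sigma_R$, and use $\Pr(\mathsf{Reg}_n)\to1$.
\item Family (f) consists of singletons. Their number is at most $I_{\tau_1}(G)$, so they contribute at most $\varepsilon_n n$. By \eqref{eq:epsilon-envelope}, $\varepsilon_n b_n^M\to0$, so this is $o(n/b_n^M)$.
\end{itemize}

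The main obstacle is family (c). Lemma~\ref{lem:preexposed-stars} only bounds the time-$\tau_1$ sizes $|S|^q$, while we need the terminal component sizes. If a pre-exposed star is later touched, its terminal component already lies in (d) and is covered. If it is untouched, its vertex set never grows after $\tau_1$ (only internal edges are added), so its terminal size equals $|S|$ and Lemma~\ref{lem:preexposed-stars} applies directly. Should a general growth bound be needed instead, I would run the Gronwall argument of Lemma~\ref{lem:terminal-marked-descendants} with \eqref{eq:one-step-growth}; starting from $\sum_{S\in\mathcal P_n^{\mathrm{pre}}}|S|^q$, it loses only the factor $e^{\gamma_q\varepsilon_n}$.

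Summing the six bounds gives the claimed $o_{\mathbb P}(n/b_n^M)$. Taking $q=1$ and $M=0$ gives the statement about the total number of vertices.
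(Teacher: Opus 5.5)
Your proposal is correct and follows the paper's proof essentially step for step. Coverage uses the same classification of a terminal component by its non-trivial ancestor at $\tau_1$. The moment bound uses the same term-by-term estimates: $\mathcal D_n^{\mathrm{act}}$ and $\mathcal M_n$ for (a), (d), (e); Lemma~\ref{lem:preexposed-stars} for untouched pre-exposed stars, whose vertex set is frozen after $\tau_1$; $\widehat R_{\tau_1}\le b_n$ for (b); and $I_{\tau_1}(G)\le\varepsilon_n n$ for (f).

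One small slip: a pre-exposed star that is later touched does not lie in family (d), since it is not clean. Its terminal component does lie in $\mathcal D_n^{\mathrm{act}}$ through condition (i), which is exactly how the paper covers it.
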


\begin{proof}
The coverage assertion follows directly from the definitions.  A terminal
non-isolated component either has a non-trivial ancestor at time $\tau_1$ or is
born from a post-$\tau_1$ accepted I--I edge.  In the first case, its ancestor
is marked, size-exceptional, pre-exposed, or clean.  A clean ancestor is either
post-$\tau_1$-touched or untouched.  Terminal isolated vertices form the final
listed class.

It remains to sum the estimates. Components containing marked vertices
are controlled by Lemmas~\ref{lem:posttau-activated-descendants} and
\ref{lem:terminal-marked-descendants}. A pre-exposed star that later
receives an accepted I--N attachment, every post-\(\tau_1\)-touched clean
star, and every post-\(\tau_1\)-born component are counted in
\(\mathcal D_n^{\mathrm{act}}\). If a pre-exposed star never receives a
later accepted I--N attachment, then its terminal descendant has the same
vertex set, so its \(q\)-moment contribution is controlled directly by
Lemma~\ref{lem:preexposed-stars}. By
Lemma~\ref{lem:auxiliary-component-bounds}, w.h.p. no skeleton component
present before \(r_n^{\max}\) has more than \(b_n\) vertices, so no star
at \(\tau_1\) is size-exceptional. Finally, the number of terminal
isolated vertices is at most
\(
        I_{\tau_1}(G)\le\varepsilon_n n=o(n/b_n^M)
\). Summing the preceding bounds, allowing terminal components to be counted more than once, gives $o_{\mathbb P}(n/b_n^M)$.
\end{proof}

\subsection{Terminal component and statistic limits}

Let \(\mathcal S_n\) be the family of all \(\tau_1\)-clean stars, and let
\(\mathcal S_n^{\mathrm{unt}}\subseteq\mathcal S_n\) be the subfamily of
stars that are not post-\(\tau_1\)-touched. For \(S\in\mathcal S_n\) with at least two leaves, write \(c(S)\) for its unique center.  If \(S\cong K_2\), choose \(c(S)\) by a fixed deterministic rule.  In either case, let \(L(S):=V(S)\setminus\{c(S)\}\), and write
\(
\ell(S):=|L(S)|
\).

We first show that the evolution on the leaf sets of the clean stars can
be viewed as a family of independent finite induced-\(P_4\)-free
processes.

For each \(S\in\mathcal S_n\), let \(\pi_S\) be the relative order
induced by the remaining edge order on
\(
E_S^{\mathrm{leaf}}:=\binom{L(S)}2,
\)
and let \(\Gamma_S\) be the terminal graph of the
induced-\(P_4\)-free process on \(L(S)\) driven by \(\pi_S\).
Let \(\widetilde D_S\) be the graph on \(V(S)\) obtained by joining
\(c(S)\) to every vertex of \(\Gamma_S\).
\begin{lemma}
\label{lem:clean-star-completion}

Conditional on \(\mathcal F_{\tau_1}\), the graphs
\(
\bigl(\widetilde D_S\bigr)_{S\in\mathcal S_n}
\)
are mutually independent.  Moreover, if \(S\) has \(\ell\) leaves, then
\(
\Gamma_S\stackrel{d}{=}\mathcal G_\ell
\) and
\(
\widetilde D_S\stackrel{d}{=}
\mathcal H_\ell=K_1\vee\mathcal G_\ell.
\)

For every \(S\in\mathcal S_n^{\mathrm{unt}}\), the actual terminal
component of \(G_N\) containing \(S\) is exactly
\(\widetilde D_S\), as a labelled graph.
\end{lemma}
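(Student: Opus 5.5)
Two things must be shown. First, the independence and the distributional identities, which come from Lemma~\ref{lem:suffix-exchangeability}. Second, a deterministic statement: on an untouched clean star, the original process restricted to $V(S)$ after $\tau_1$ is exactly the induced-$P_4$-free process on $L(S)$ driven by $\pi_S$, with $c(S)$ joined to everything.

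\emph{Step 1 (distribution and independence).} The sets $E_S^{\mathrm{leaf}}$, $S\in\mathcal S_n$, are $\mathcal F_{\tau_1}$-measurable, because $\mathcal S_n$ is determined by the history up to $\tau_1$. They are pairwise disjoint, since distinct stars have disjoint leaf sets. By (P3) they consist entirely of unrevealed pairs. Lemma~\ref{lem:suffix-exchangeability} then says that, conditional on $\mathcal F_{\tau_1}$, the relative orders $\pi_S$ are independent, each uniform on $E_S^{\mathrm{leaf}}$. Each $\Gamma_S$ is a deterministic function of $\pi_S$ alone, so the $\Gamma_S$ are conditionally independent, and hence so are the graphs $\widetilde D_S$. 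If $S$ has $\ell$ leaves, a uniform order of $\binom{L(S)}{2}$ drives exactly the process defining $\mathcal G_\ell$, after relabelling $L(S)$ as $[\ell]$. This gives $\Gamma_S\stackrel{d}{=}\mathcal G_\ell$ and $\widetilde D_S\stackrel{d}{=}K_1\vee\mathcal G_\ell=\mathcal H_\ell$. For $S\cong K_2$ we have $\ell=1$ and everything is trivial.

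\emph{Step 2 (identification for untouched stars).} Fix $S\in\mathcal S_n^{\mathrm{unt}}$. By (P1) and Theorem~\ref{thm:skeleton-comparison}, $S$ is a component of $G_{\tau_1}$ and $G_{\tau_1}[V(S)]$ is the star itself, so at time $\tau_1$ the graph is $c(S)\vee\Gamma^{(0)}$ with $\Gamma^{(0)}$ empty on $L(S)$. Since $S$ is untouched, no accepted I--N edge ever enters its component after $\tau_1$. By Proposition~\ref{pro:structural-invariants}(i), no accepted edge joins it to another non-trivial component. So the vertex set stays $V(S)$, and the only edges that can change the component are internal pairs offered after $\tau_1$. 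Internal pairs involving $c(S)$ are already edges. Hence the internal offers are exactly the leaf pairs, in the order $\pi_S$.

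I then prove, by induction along $\pi_S$, that the current component equals $c(S)\vee\Gamma$, where $\Gamma$ is the current graph of the process on $L(S)$ driven by $\pi_S$. The key claim is: for a graph $\Gamma$ on $L$ and a pair $e\subseteq L$, $K_1\vee(\Gamma+e)$ is induced-$P_4$-free if and only if $\Gamma+e$ is. The forward direction holds because induced subgraphs of an induced-$P_4$-free graph are induced-$P_4$-free. For the converse, any induced $P_4$ containing the apex $c$ would need $c$ to be non-adjacent to some vertex of the path, which is impossible because $c$ is universal. Since the component is a whole component of $G_r$, acceptability of $e$ in $G_r$ depends only on the component: an induced $P_4$ is connected, so it lies inside a single component. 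Therefore the acceptance decisions in the two processes coincide at every step. At the end the component is $c(S)\vee\Gamma_S=\widetilde D_S$, as labelled graphs.

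\emph{Main obstacle.} The delicate point is the measurability bookkeeping in Step~1. Membership in $\mathcal S_n^{\mathrm{unt}}$ depends on the future, but the lemma only uses $\mathcal S_n$, which is $\mathcal F_{\tau_1}$-measurable, so this causes no problem. One also has to make sure that $\tau_1$ is a genuine stopping time, so that Lemma~\ref{lem:suffix-exchangeability} applies. The coupling in Step~2 is then purely deterministic and holds pathwise for the untouched stars, even though $\widetilde D_S$ is defined for every clean star.
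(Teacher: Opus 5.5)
Your proposal is correct and follows essentially the same route as the paper. You use (P3) and Lemma~\ref{lem:suffix-exchangeability} to get conditionally independent uniform orders on the leaf pairs, then identify the component pathwise: an untouched clean star keeps vertex set $V(S)$, and a universal vertex cannot lie on an induced $P_4$. Your extra remarks, that acceptability of an internal pair depends only on its component and that $\mathcal S_n$ is $\mathcal F_{\tau_1}$-measurable, make explicit points the paper leaves implicit.
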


\begin{proof}
By property (P3) in Definition~\ref{def:tau1-clean}, no pair in
\(E_S^{\mathrm{leaf}}\) has been offered by time \(\tau_1\).  Conditional
on \(\mathcal F_{\tau_1}\), the sets
\(
\bigl(E_S^{\mathrm{leaf}}\bigr)_{S\in\mathcal S_n}
\)
are fixed and pairwise disjoint.  Lemma~\ref{lem:suffix-exchangeability}
therefore implies that the relative orders \((\pi_S)_{S\in\mathcal S_n}\)
are mutually independent uniform orders.  Consequently, the graphs
\((\Gamma_S)_{S\in\mathcal S_n}\), and hence the graphs
\((\widetilde D_S)_{S\in\mathcal S_n}\), are conditionally independent.
As \(\ell(S)=\ell\), their definitions give
\(
\Gamma_S\stackrel{d}{=}\mathcal G_\ell
\) and \(
\widetilde D_S\stackrel{d}{=}
K_1\vee\mathcal G_\ell=\mathcal H_\ell
\).

Now fix \(S\in\mathcal S_n^{\mathrm{unt}}\).  Since \(S\) is untouched,
no accepted I--N edge after \(\tau_1\) adds a vertex to the component
containing \(S\).  Moreover, distinct non-trivial components cannot merge,
by Proposition~\ref{pro:structural-invariants}.  Thus the vertex set of
this component remains \(V(S)\), and its only possible accepted edges
after \(\tau_1\) have both endpoints in \(L(S)\).

The vertex \(c(S)\) remains universal throughout this evolution.  For
every graph \(H\) on \(L(S)\),
\(K_1\vee H\) is induced-\(P_4\)-free if and only if
$H$ is induced-\(P_4\)-free,
because a universal vertex cannot belong to an induced copy of \(P_4\).
It follows that each offered leaf--leaf edge is accepted in the original
process precisely when it is accepted by the induced-\(P_4\)-free
process on \(L(S)\) driven by \(\pi_S\).  Hence the actual terminal
component containing \(S\) is \(\widetilde D_S\).
\end{proof}

The exceptional-component estimate implies that deleting the stars that are touched or unclean does not change either the fixed-size asymptotics or the polynomial-moment tails of the skeleton stars. For \(\ell\ge1\), define
\[
Y_\ell^{\mathrm{cl}}
:=\bigl|{S\in\mathcal S_n:\ell(S)=\ell}\bigr|,
\qquad
Y_\ell^{\mathrm{unt}}
:=\bigl|{S\in\mathcal S_n^{\mathrm{unt}}:
\ell(S)=\ell}\bigr|.
\]

\begin{cor}
\label{cor:clean-star-counts}
For every fixed \(\ell\ge1\),
\[
Y_\ell^{\mathrm{unt}}
=\rho_\ell n+o_{\mathbb P}(n),
\qquad
Y_\ell^{\mathrm{cl}}-Y_\ell^{\mathrm{unt}}
=o_{\mathbb P}(n).
\]

Moreover, for every fixed \(q\ge1\) and \(\eta>0\),
\[
\lim_{L_0\to\infty}\limsup_{n\to\infty}
\mathbb P\!\left(
\frac1n
\sum_{\substack{S\in\mathcal S_n\\ |S|>L_0}}
|S|^q>\eta
\right)=0.
\]
The same conclusion therefore holds with \(\mathcal S_n^{\mathrm{unt}}\) in place of \(\mathcal S_n\).
\end{cor}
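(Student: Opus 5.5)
The strategy is to compare the clean-star counts at time $\tau_1$ with the terminal skeleton star counts $\widehat Y_{\ell,N}$ from Theorem~\ref{thm:intro-iskeleton}. Every discrepancy should be absorbed by the exceptional family of Lemma~\ref{lem:exceptional-components}, or by the $o_{\mathbb P}(n)$ skeleton evolution after $\tau_1$.

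\textbf{Step 1 (skeleton at $\tau_1$ versus terminal).} On $\mathsf{Reg}_n$ we have $\tau_1\le r_n^{\max}$ and $\widehat I_{\tau_1}=I(G_{\tau_1})+O(Z_{\tau_1})\le \varepsilon_n n+o_{\mathbb P}(n)$ by Lemma~\ref{lem:coupling-invariant} and Lemma~\ref{lem:marked-region-control}. Every later skeleton edge consumes an isolated vertex, so at most $\widehat I_{\tau_1}$ further skeleton edges are accepted, and each changes at most two of the $\widehat Y_\ell$ by one. Hence $\widehat Y_{\ell,\tau_1}=\widehat Y_{\ell,N}+o_{\mathbb P}(n)=\rho_\ell n+o_{\mathbb P}(n)$ by Theorem~\ref{thm:intro-iskeleton}.

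\textbf{Step 2 (removing unclean stars).} A skeleton $\ell$-leaf star at $\tau_1$ that is not clean is either marked, size-exceptional, or pre-exposed. We bound each case separately.
\begin{itemize}
\item The number of marked stars is at most $Z_{\tau_1}=o_{\mathbb P}(n)$ by Lemma~\ref{lem:marked-region-control}.
\item W.h.p.\ no star is size-exceptional, by Lemma~\ref{lem:auxiliary-component-bounds}; here $\widehat R\le b_n$ gives at most $b_n-1$ leaves.
\item Pre-exposed stars number $o_{\mathbb P}(n)$ by Lemma~\ref{lem:preexposed-stars} with $q=1$.
\end{itemize}
Thus $Y_\ell^{\mathrm{cl}}=\rho_\ell n+o_{\mathbb P}(n)$.

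\textbf{Step 3 (removing touched stars).} Every post-$\tau_1$-touched clean star lies in a terminal component of $\mathcal D_n^{\mathrm{act}}$, and distinct clean stars lie in distinct terminal components, since non-trivial components never merge (Proposition~\ref{pro:structural-invariants}(i)). Lemma~\ref{lem:posttau-activated-descendants} with $q=1$ therefore bounds their number by $o_{\mathbb P}(n)$. This gives $Y_\ell^{\mathrm{cl}}-Y_\ell^{\mathrm{unt}}=o_{\mathbb P}(n)$, and so $Y^{\mathrm{unt}}_\ell=\rho_\ell n+o_{\mathbb P}(n)$.

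\textbf{Step 4 (tail statement).} Clean stars are unmarked skeleton components of $\widehat{\mathcal P}_{\tau_1}$. The main obstacle is that the tail bound of Lemma~\ref{lem:auxiliary-component-bounds} is stated at $r_n^{\max}$ rather than at the random time $\tau_1$. Its proof, however, bounds $|C|=2+B_C$, where $B_C$ counts attachments up to $\tau_0\ge r_n^{\max}\wedge\dots$, and component sizes are monotone in $r$. Hence $\sum_{S\in\widehat{\mathcal P}_{\tau_1},|S|>L_0}|S|^q\le \sum_{C\in\widehat{\mathcal P}_{r_n^{\max}},|C|>L_0}|C|^q$ on $\{\tau_1\le r_n^{\max}\}$. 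To see this, map each component at $\tau_1$ to its descendant at $r_n^{\max}$. This map is injective on non-trivial components, because skeleton components never merge, and it does not decrease size. The stated limit then follows directly. Since $\mathcal S_n^{\mathrm{unt}}\subseteq\mathcal S_n$, the same conclusion holds for the untouched family.
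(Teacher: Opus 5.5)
Your proposal is correct and follows essentially the same route as the paper. You compare $\widehat Y_{\ell,\tau_1}$ with the terminal skeleton counts using $\widehat I_{\tau_1}=o_{\mathbb P}(n)$, discard every star not in $\mathcal S_n^{\mathrm{unt}}$ at cost $o_{\mathbb P}(n)$, and obtain the tail by mapping components at $\tau_1$ injectively and size-monotonically to those at $r_n^{\max}$. The only difference is organisational: the paper bounds all discarded stars at once by $\sum_{D\in\mathcal E_n}|D|$ via Lemma~\ref{lem:exceptional-components} and vertex-disjointness, whereas you cite the constituent estimates (marked region, size bound, pre-exposed stars, activated descendants) separately.
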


\begin{proof}
Let \(\widehat Y_{\ell,\tau_1}\) be the number of I-skeleton
components with \(\ell\) leaves at time \(\tau_1\).  By
Theorem~\ref{thm:skeleton-comparison},
\(
\widehat I_{\tau_1}
\le I_{\tau_1}(G)+o_{\mathbb P}(n)
=o_{\mathbb P}(n).
\)
While every accepted I-skeleton edge after \(\tau_1\) consumes at least one
vertex that is isolated at time \(\tau_1\), for every fixed \(\ell\),
\[
\widehat Y_{\ell,\tau_1}
=\widehat Y_\ell+o_{\mathbb P}(n)
=\rho_\ell n+o_{\mathbb P}(n),
\]
where the second equality follows from
Theorem~\ref{thm:intro-iskeleton}.

Let \(\widehat{\mathcal S}_{\tau_1}:=\{C\in\widehat{\mathcal P}_{\tau_1}:|C|\ge2\}\)
be the family of all non-trivial I-skeleton stars present at
\(\tau_1\), without imposing cleanliness. Every star in
\(\widehat{\mathcal S}_{\tau_1}\setminus \mathcal S_n^{\mathrm{unt}}\) is either marked, size-exceptional, pre-exposed, or a touched clean star. Hence all of its vertices belong to terminal
components in \(\mathcal E_n\). Since the stars at time \(\tau_1\) are
vertex-disjoint,
\[
\sum_{S\in
\widehat{\mathcal S}_{\tau_1}\setminus\mathcal S_n^{\mathrm{unt}}}|S|
\le
\sum_{D\in\mathcal E_n}|D|
=o_{\mathbb P}(n).
\]
Thus deleting all stars outside \(\mathcal S_n^{\mathrm{unt}}\) from the
\(\widehat Y_{\ell,\tau_1}\) stars with \(\ell\) leaves proves
\(
Y_\ell^{\mathrm{unt}}
=\rho_\ell n+o_{\mathbb P}(n).
\)
Since
\(\mathcal S_n\setminus\mathcal S_n^{\mathrm{unt}}\) is a subfamily of
the deleted stars, we also have
\(
Y_\ell^{\mathrm{cl}}-Y_\ell^{\mathrm{unt}}
=o_{\mathbb P}(n).
\)

For the tail estimate, work on the event
\({\tau_1\le r_n^{\max}}\), whose probability tends to one.  Every
non-trivial I-skeleton component present at \(\tau_1\) is contained in a
unique component at time \(r_n^{\max}\), distinct components remain
distinct, and component sizes can only increase.  Therefore
\[
\sum_{\substack{S\in\mathcal S_n\ |S|>L_0}}|S|^q
\le
\sum_{\substack{
C\in\widehat{\mathcal P}_{r_n^{\max}}\
|C|>L_0}}
|C|^q.
\]
The required conclusion follows from
Lemma~\ref{lem:auxiliary-component-bounds}.  Restricting the sum to
\(\mathcal S_n^{\mathrm{unt}}\) can only decrease it.
\end{proof}

We now complete the proof of the terminal component decomposition.

\begin{proof}[\bfseries\upshape Proof of Theorem~\ref{thm:intro-terminal-decomposition}]
For every \(S\in\mathcal S_n\), let \(\widetilde D_S\) be the completion
defined in Lemma~\ref{lem:clean-star-completion}.  Conditional on
\(\mathcal F_{\tau_1}\), these completions are mutually independent, and
\(
\widetilde D_S
\stackrel{d}{=}\mathcal H_{\ell(S)}.
\)
For every \(S\in\mathcal S_n^{\mathrm{unt}}\), the same lemma identifies
\(\widetilde D_S\) with the actual terminal component containing \(S\).
Consequently,
\[
G_N\!\left[
\bigcup_{S\in\mathcal S_n^{\mathrm{unt}}}V(S)
\right]
=
\bigsqcup_{S\in\mathcal S_n^{\mathrm{unt}}}
\widetilde D_S
\]
as labelled graphs. Thus the terminal components arising from untouched clean stars are
coupled to the corresponding members of a single conditionally
independent family of finite completions.

Every terminal component not represented in the preceding union belongs
to \(\mathcal E_n\).  By
Lemma~\ref{lem:exceptional-components}, for every fixed \(q\ge1\),
\[
\sum_{D\in\mathcal E_n}|D|^q
=o_{\mathbb P}(n).
\]
In particular, the exceptional components contain
\(o_{\mathbb P}(n)\) vertices.

Finally, Corollary~\ref{cor:clean-star-counts} gives, for every fixed
\(\ell\ge1\),
\(
\bigl|{S\in\mathcal S_n^{\mathrm{unt}}:
\ell(S)=\ell}\bigr|
=\rho_\ell n+o_{\mathbb P}(n).
\)
Each corresponding terminal component has distribution
\(\mathcal H_\ell\).  This proves both the asserted component
representation and the stated component densities.
\end{proof}

\begin{proof}[\bfseries\upshape Proof of Corollary~\ref{cor:fixed-component-limits}]
Fix a connected graph \(F\) with \(|F|\ge2\), and put
\(\ell=|F|-1\). Let
\[
        \mathcal S_{\ell,n}^{\mathrm{cl}}
        :=\{S\in\mathcal S_n:\ell(S)=\ell\},
        \qquad
        p_F:=\mathbb P(\mathcal H_\ell\cong F).
\]
For \(S\in\mathcal S_{\ell,n}^{\mathrm{cl}}\), define
\[
        \widetilde X_S
        :=\mathbf1_{\{\widetilde D_S\cong F\}},
\]
where \(\widetilde D_S\) is the completion from
Lemma~\ref{lem:clean-star-completion}. Conditional on
\(\mathcal F_{\tau_1}\), the variables
\((\widetilde X_S)_{S\in\mathcal S_{\ell,n}^{\mathrm{cl}}}\) are
independent Bernoulli variables with success probability \(p_F\).
Therefore
\[
\operatorname{Var}\!\left(
        \sum_{S\in\mathcal S_{\ell,n}^{\mathrm{cl}}}\widetilde X_S
        \,\middle|\,\mathcal F_{\tau_1}
\right)
\le Y_\ell^{\mathrm{cl}}\le n.
\]
By conditional Chebyshev's inequality and
Corollary~\ref{cor:clean-star-counts},
\[
\frac1n\sum_{S\in\mathcal S_{\ell,n}^{\mathrm{cl}}}\widetilde X_S-\frac{Y_\ell^{\mathrm{cl}}}{n}p_F
\xrightarrow{\mathbb P}0,
\qquad
\frac{Y_\ell^{\mathrm{cl}}}{n}
\xrightarrow{\mathbb P}\rho_\ell.
\]
Hence
\[
\frac1n\sum_{S\in\mathcal S_{\ell,n}^{\mathrm{cl}}}\widetilde X_S
\xrightarrow{\mathbb P}\rho_\ell p_F.
\]

For untouched clean stars the artificial completion is the actual
terminal component. The discrepancy between the preceding full clean
completion count and \(C_F(G_N)\) is therefore bounded by the number of
touched clean stars plus the number of exceptional terminal components:
\[
\left|
C_F(G_N)-\sum_{S\in\mathcal S_{\ell,n}^{\mathrm{cl}}}\widetilde X_S
\right|\le
|\mathcal S_n\setminus\mathcal S_n^{\mathrm{unt}}|
+|\mathcal E_n|
\le
2\sum_{D\in\mathcal E_n}|D|
=o_{\mathbb P}(n).
\]
Dividing by \(n\) proves
\[
\frac{C_F(G_N)}n
\xrightarrow{\mathbb P}
\rho_{|F|-1}\mathbb P(\mathcal H_{|F|-1}\cong F).
\]
\end{proof}

We next give general additive statistics from the component representation.

\begin{proof}[\bfseries\upshape Proof of Theorem~\ref{thm:additive-statistics}]
For \(S\in\mathcal S_n\), let \(\widetilde D_S\) be the completion from
Lemma~\ref{lem:clean-star-completion}.  We first compare the actual
terminal graph with the full independent completion family.

Choose an integer \(q\ge1\) and a constant \(C<\infty\) such that
\(|\phi(F)|\le C|V(F)|^q\) for every finite connected graph \(F\). The
components of \(G_N\) outside the family indexed by
\(\mathcal S_n^{\mathrm{unt}}\) belong to \(\mathcal E_n\). Moreover,
every star in
\(\mathcal S_n\setminus\mathcal S_n^{\mathrm{unt}}\) has all its vertices
in the exceptional region. Hence
\[
\sum_{S\in\mathcal S_n\setminus\mathcal S_n^{\mathrm{unt}}}|S|^q
\le
\sum_{D\in\mathcal E_n}|D|^q
=o_{\mathbb P}(n).
\]
Since \(|V(\widetilde D_S)|=|S|\),
\[
\left|
\sum_{D\in\mathcal C(G_N)}\phi(D)
-\sum_{S\in\mathcal S_n}\phi(\widetilde D_S)
\right|
\le
C\sum_{D\in\mathcal E_n}|D|^q
+
C\sum_{S\in\mathcal S_n\setminus\mathcal S_n^{\mathrm{unt}}}|S|^q
=o_{\mathbb P}(n).
\]
In particular,
\begin{equation}\label{eq:phi-reduction}
\sum_{D\in\mathcal C(G_N)}\phi(D)
=
\sum_{S\in\mathcal S_n}\phi(\widetilde D_S)
+o_{\mathbb P}(n).
\end{equation}

Fix \(L_0<\infty\).  Conditional on \(\mathcal F_{\tau_1}\), the random
variables
\(
\bigl(
\phi(\widetilde D_S)\bigr)_{
S\in\mathcal S_n,\ \ell(S)\le L_0
}
\)
are independent and uniformly bounded.  Their total conditional variance
is \(O(n)\).  Hence Chebyshev's inequality gives
\[
\frac1n
\sum_{\substack{S\in\mathcal S_n\ \ell(S)\le L_0}}
\left(
\phi(\widetilde D_S)
-\mathbb E\!\left[
\phi(\widetilde D_S)\mid\mathcal F_{\tau_1}
\right]
\right)
\xrightarrow{\mathbb P}0.
\]
If \(\ell(S)=\ell\), then
\(
\mathbb E\!\left[
\phi(\widetilde D_S)\mid\mathcal F_{\tau_1}
\right]
=\mathbb E\phi(\mathcal H_\ell).
\)
Corollary~\ref{cor:clean-star-counts} therefore yields
\[
\frac1n
\sum_{\substack{S\in\mathcal S_n\ \ell(S)\le L_0}}
\phi(\widetilde D_S)
\xrightarrow{\mathbb P}
\sum_{\ell\le L_0}
\rho_\ell\mathbb E\phi(\mathcal H_\ell).
\]

The remaining random terms satisfy
\[
\frac1n
\sum_{\substack{S\in\mathcal S_n\ \ell(S)>L_0}}
|\phi(\widetilde D_S)|
\le
\frac{C}{n}
\sum_{\substack{S\in\mathcal S_n\ |S|>L_0+1}}
|S|^q.
\]
By Corollary~\ref{cor:clean-star-counts}, the right-hand side is uniformly
negligible as \(L_0\to\infty\).

Finally,
\(
|\mathbb E\phi(\mathcal H_\ell)|
\le C(\ell+1)^q
\), and Corollary~\ref{cor:iskeleton-factorial-tail} gives a factorial tail for \((\rho_\ell)_{\ell\ge1}\).  Thus
\[
\sum_{\ell\ge1}
\rho_\ell|\mathbb E\phi(\mathcal H_\ell)|
<\infty.
\]
Letting \(L_0\to\infty\) in \eqref{eq:phi-reduction} proves
\[
\frac1n
\sum_{D\in\mathcal C(G_N)}\phi(D)
\xrightarrow{\mathbb P}
\sum_{\ell\ge1}
\rho_\ell\mathbb E\phi(\mathcal H_\ell).
\]
\end{proof}

\begin{proof}[\bfseries\upshape Proofs of Corollaries~\ref{cor:intro-clique-profile} and~\ref{cor:intro-degree-distribution}]
For clique counts, take
\(
\phi(D)=N(D,K_j)
\)
for fixed \(j\ge2\).  Then
\[
\frac{N(G_N,K_j)}n
\xrightarrow{\mathbb P}
\kappa_j
=
\sum_{\ell\ge1}\rho_\ell
\mathbb E N(\mathcal H_\ell,K_j).
\]
In
\(\mathcal H_\ell=K_1\vee\mathcal G_\ell\), a clique either avoids the
universal vertex or contains it.  Therefore
\[
N(\mathcal H_\ell,K_j)
=
N(\mathcal G_\ell,K_j)
+
N(\mathcal G_\ell,K_{j-1}).
\]
For \(j=2\), this becomes
\(
e(\mathcal H_\ell)
=
\ell+e(\mathcal G_\ell),
\)
which gives the stated formula for \(\kappa_2\).

For degree counts, take
\(
\phi(D)=N_k^{\deg}(D).
\)
This gives the limit defining \(p_k\).  In
\(\mathcal H_\ell=K_1\vee\mathcal G_\ell\), the universal vertex has
degree \(\ell\), while the degree of every leaf-set vertex is increased
by one.  Hence, for \(k\ge1\),
\[
N_k^{\deg}(\mathcal H_\ell)
=
\mathbf1_{{\ell=k}}
+
N_{k-1}^{\deg}(\mathcal G_\ell),
\]
whereas
\(
N_0^{\deg}(\mathcal H_\ell)=0.
\)
It follows that \(p_0=0\). Applying Theorem~\ref{thm:additive-statistics} to
\(\phi(D)=|V(D)|\) gives
\[1=
\frac1n\sum_{D\in\mathcal C(G_N)}|V(D)|
\ \xrightarrow{\mathbb P}\
\sum_{\ell\ge1}(\ell+1)\rho_\ell,
\]
so
\(\sum_{\ell\ge1}(\ell+1)\rho_\ell=1\). Since all summands are
non-negative, Tonelli's theorem now gives
\[
\sum_{k\ge0}p_k
=
\sum_{\ell\ge1}\rho_\ell\,
\mathbb E\!\left[\sum_{k\ge0}N_k^{\deg}(\mathcal H_\ell)\right]
=
\sum_{\ell\ge1}(\ell+1)\rho_\ell
=1.
\]
\end{proof}
\subsection{Effective computation of the finite completion laws}
\label{subsec:effective-computation}

Lemma~\ref{lem:clean-star-completion} identifies the completion of an
untouched clean star with \(\ell\) leaves as
\(\mathcal H_\ell=K_1\vee\mathcal G_\ell\). The laws of the finite random graphs
\(\mathcal G_\ell\), and hence the constants appearing in the preceding
limit theorems, can be computed recursively.

Fix \(\ell\ge1\) and let $[\ell]=\{1,2,\cdots,\ell\}$ be the vertex set and \(E_\ell=\binom{[\ell]}2\). A state is a pair
\((A,U)\), where \(A\subseteq E_\ell\) is the set of accepted edges and
\(U\subseteq E_\ell\setminus A\) is the set of pairs not yet offered. For
\(e\in U\), let

$$
A_e=\begin{cases}
A\cup\{e\},& \text{if }([\ell],A\cup\{e\})\text{ is induced-\(P_4\)-free},\\
A,&\text{otherwise}.
\end{cases}
$$

For a graph functional \(\psi\), define
\(
g_{\ell,\psi}(A,\varnothing)=\psi(([\ell],A))
\)
and for \(U\ne\varnothing\),
\[
g_{\ell,\psi}(A,U)
=\frac1{|U|}\sum_{e\in U}
g_{\ell,\psi}(A_e,U\setminus\{e\}).
\]
Then
\(
\mathbb E\psi(\mathcal G_\ell)
=g_{\ell,\psi}(\varnothing,E_\ell).
\)

For example, writing
\[
\gamma_{\ell,j}:=\mathbb E N(\mathcal G_\ell,K_j),
\qquad \gamma_{\ell,1}:=\ell,
\]
gives
\[
\kappa_j
=\sum_{\ell\ge1}\rho_\ell
\bigl(\gamma_{\ell,j}+\gamma_{\ell,j-1}\bigr),
\qquad j\ge2,
\]
and
\[
0\le
\kappa_j-\sum_{\ell\le L}\rho_\ell
\bigl(\gamma_{\ell,j}+\gamma_{\ell,j-1}\bigr)
\le
\sum_{\ell>L}\rho_\ell\binom{\ell+1}{j}.
\]
More generally, if
\(|\phi(D)|\le q_1|V(D)|^{q_2}\) with \(q_1>0\) and \(q_2\ge0\), then
\[
\left|
\sum_{\ell\ge1}\rho_\ell\,\mathbb E\phi(\mathcal H_\ell)
-
\sum_{\ell\le L}\rho_\ell\,\mathbb E\phi(\mathcal H_\ell)
\right|
\le
q_1\sum_{\ell>L}(\ell+1)^{q_2}\rho_\ell.
\]
The factorial tail of \((\rho_\ell)\) therefore yields effective
approximations from finitely many completion laws.

\section{Acknowledgements}
	This work was supported by the Key Program of the National Natural Science Foundation of China (NSFC) under Grant No. 12231018.

\section{Declaration on the use of AI}
    During the preparation of this manuscript, ChatGPT 5.6 was used to assist with computational tasks and language polishing. All research ideas, proof strategies, theoretical arguments, and conclusions were independently developed by the authors. 

\bibliographystyle{abbrv}

\bibliography{references}
\end{document}